\newcommand{\strutstretchdef}{\newcommand{\strutstretch}{\vphantom{\raisebox{1pt}{$\big($}\raisebox{-1pt}{$\big($}}}}
\theoremstyle{plain}
\newtheorem{theorem}{Theorem}[section]
\newtheorem{lemma}[theorem]{Lemma}
\newtheorem{proposition}[theorem]{Proposition}
\newtheorem{corollary}[theorem]{Corollary}
\theoremstyle{definition}
\newtheorem{example}[theorem]{Example}
\newtheorem{question}[theorem]{Question}
\theoremstyle{remark}
\newtheorem{remark}[theorem]{Remark}
\numberwithin{equation}{section}
\newlength{\struh}
\newlength{\textminustop}
\newcommand{\ncom}{\newcommand}
\ncom{\bq}{\begin{equation}}
\ncom{\eq}{\end{equation}}
\ncom{\beqn}{\begin{eqnarray*}}
\ncom{\eeqn}{\end{eqnarray*}}
\ncom{\beq}{\begin{eqnarray}}
\ncom{\eeq}{\end{eqnarray}}
\ncom{\nno}{\nonumber}
\ncom{\rar}{\rightarrow}
\ncom{\Rar}{\Rightarrow}
\ncom{\noin}{\noindent}
\ncom{\bc}{\begin{centre}}
\ncom{\ec}{\end{centre}}
\ncom{\sz}{\scriptsize}
\ncom{\rf}{\ref}
\ncom{\sgm}{\sigma}
\ncom{\Sgm}{\Sigma}
\ncom{\dt}{\delta}
\ncom{\Dt}{Delta}
\ncom{\lmd}{\lambda}
\ncom{\Lmd}{\Lambda}
\ncom{\eps}{\epsilon}
\ncom{\pcc}{\stackrel{P}{>}}
\ncom{\dist}{{\rm\,dist}}
\ncom{\sspan}{{\rm\,span}}
\ncom{\im}{{\rm Im\,}}
\ncom{\sgn}{{\rm sgn\,}}
\ncom{\ba}{\begin{array}}
\ncom{\ea}{\end{array}}
\ncom{\eop}{\hfill{{\rule{2.5mm}{2.5mm}}}}
\ncom{\eoe}{\hfill{{\rule{1.5mm}{1.5mm}}}}
\ncom{\eof}{\hfill{{\rule{1.5mm}{1.5mm}}}}
\ncom{\hone}{\mbox{\hspace{1em}}}
\ncom{\htwo}{\mbox{\hspace{2em}}}
\ncom{\hthree}{\mbox{\hspace{3em}}}
\ncom{\hfour}{\mbox{\hspace{4em}}}
\ncom{\hsev}{\mbox{\hspace{7em}}}
\ncom{\vone}{\vskip 2ex}
\ncom{\vtwo}{\vskip 4ex}
\ncom{\vonee}{\vskip 1.5ex}
\ncom{\vthree}{\vskip 6ex}
\ncom{\vfour}{\vspace*{8ex}}
\ncom{\norm}{\|\;\;\|}
\ncom{\integ}[4]{\int_{#1}^{#2}\,{#3}\,d{#4}}
\ncom{\inp}[2]{\langle{#1},\,{#2} \rangle}
\ncom{\Inp}[2]{\Langle{#1},\,{#2} \Langle}
\ncom{\vspan}[1]{{{\rm\,span}\#1 \}}}
\ncom{\dm}[1]{\displaystyle {#1}}
\begin{document}
\title[Dirichlet-type spaces and joint $2$-isometries]{Dirichlet-type spaces on the unit ball \\ and joint $2$-isometries}


\author[S. Chavan, R. Gupta and Md. R. Reza]{Sameer Chavan, Rajeev Gupta and Md. Ramiz Reza}


\address{Department of Mathematics and Statistics\\
Indian Institute of Technology Kanpur, India}
   \email{chavan@iitk.ac.in}
   \email{rajeevg@iitk.ac.in}
 \email{ramiz@iitk.ac.in}

\thanks{The work of the second author is supported through the Inspire Faculty Fellowship}

\keywords{Poisson integral, Dirichlet-type spaces, Carleson measure, complex moment problem, joint $m$-isometry}

\subjclass[2000]{Primary 47A13, 47B38; Secondary 31C25, 46E20, 44A60}

\begin{abstract} 
We obtain a formula that relates the spherical moments of the multiplication tuple  on a Dirichlet-type space  to a complex moment problem in several variables.  This can be seen as the ball-analogue of a formula originally invented by Richter in \cite{Ri}.
We capitalize on this formula to study  Dirichlet-type spaces on the unit ball and joint $2$-isometries. 
\end{abstract}

\setcounter{tocdepth}{2}

\maketitle


\vspace{-.8cm}

\section{Introduction}

The investigations in this paper are motivated by the theory of Dirichlet-type spaces first introduced and studied by Richter \cite{Ri} in the context of the wandering subspace problem and the model theory of cyclic analytic $2$-isometries. 
The present work may be seen as an attempt to understand the spherical counterpart of the theories of Dirichlet-type spaces and $2$-isometries.  
 It is well-known that every analytic $2$-isometry admits the wandering subspace property (see \cite[Theorem 1]{Ri-2} and \cite[Theorem 3.6]{Sh}). 
 In case $d > 1$, there are analytic joint $2$-isometric $d$-tuples without the wandering subspace property. Indeed, by \cite[Example 6.8]{BEKS}, for every non-zero $a$ in the open unit ball $\mathbb B^d$ in $\mathbb C^d,$ the restriction of the Drury-Arveson $d$-shift to the invariant subspace $\mathcal M_a:=\{f \in H^2_d : f(a)=0\}$ admits the wandering subspace property if and only if $d=1,$ and one may infer from \cite[Theorem 4.2]{GR} that the restriction of the Drury-Arveson $d$-shift to $\mathcal M_a$  is a joint $d$-isometry.  Further, in the context of model theory for joint $2$-isometric $d$-tuples, there is a disparity in the cases $d=1$ and $d >1.$ In fact, by \cite[Theorem 5.1]{Ri}, the operator of the multiplication by the coordinate function on a Dirichlet-type space provides a canonical model for any cyclic analytic $2$-isometry. In case of $d > 1,$ there exist cyclic analytic joint $2$-isometric $d$-tuples which can not be modeled in this way (see Example \ref{a-model-fails}; see also \cite[Theorem 3.2]{Le}). 
 
We also find it necessary to comment on possible variants of the Dirichlet-type spaces in several variables. It is well-known \cite{Ru-2} that there are two possible analogues of Poisson kernel for the open unit ball $\mathbb B^d$ in $\mathbb C^d$ provided $d \geqslant 2.$  One of which is the {\it invariant Poisson kernel} $P_{\iota}(z, \zeta)$ given by
\beqn
P_{\iota}(z, \zeta) = \frac{(1-\|z\|^2)^d}{|1- \inp{z}{\zeta}|^{2d}}, \quad z \in \mathbb B^d, ~ \zeta \in \partial \mathbb B^d.
\eeqn
One may associate with any finite positive Borel measure $\mu$ on the unit sphere $\partial \mathbb B^d$ the {\it invariant Poisson integral} $$P_{\iota}[\mu](z)=\int_{\partial \mathbb B^d} P(z, \zeta) d\mu(\zeta), \quad z \in \mathbb B^d,$$ and define the {\it invariant Dirichlet-type space} $\mathscr D_{\iota}(\mu)$ as the Hilbert space of those functions $f$ in the Hardy space $H^2(\mathbb B^d)$ for which the weighted $L^2$-norm of the gradient of $f,$ with weight being $P_{\iota}[\mu]$ is finite. 
Unlike the case of $d=1,$ the invariant Poisson integral need not define a harmonic function (see \cite[Remark 3.3.10]{Ru-2}).
In particular, the tuple $\mathscr M_z$ of the multiplication by the coordinate functions on an invariant Dirichlet-type space need not be a joint $2$-isometry (see Remark \ref{R-F-fails}). Thus the verbatim analogue of \cite[Theorem 3.7]{Ri-2} fails for invariant Dirichlet-type spaces.
Rather unexpectedly, the {\it Poisson kernel} $P(z, \zeta)$ associated with the Euclidean ball of $\mathbb R^{2d}$ yields Dirichlet-type spaces that support joint $2$-isometries: 
 \beqn 
 P(z, \zeta) = \frac{1-\|z\|^2}{\|z-\zeta\|^{2d}}, \quad z \in \mathbb B^d, ~ \zeta \in \partial \mathbb B^d.
 \eeqn
 Throughout this article, we need the following properties of the Poisson kernel (see \cite[Proposition 3.1.12]{Si}): 
\begin{enumerate}
\item[$\bullet$] For every $\zeta \in \partial \mathbb B^d,$ \beq \label{P-nor} \int_{\mathbb B^d} P(z, \zeta)dV(z)=1, \eeq
where $V$ is the normalized volume measure on $\mathbb B^d.$
\item[$\bullet$] For every $z \in \mathbb B^d,$ \beq \label{P-nor-2} \int_{\partial \mathbb B^d} P(z, \zeta)d\sigma(\zeta)=1, \eeq
where $\sigma$ is the normalized surface area measure on $\partial \mathbb B^d.$
\item[$\bullet$] $P(z,\zeta)$ is symmetric in the following sense:
 \beq \label{P-sym} P(r\eta, \zeta)=P(r\zeta, \eta), \quad \eta, \zeta \in \partial \mathbb B^d, ~0 \leqslant r < 1.
 \eeq 
\item[$\bullet$] For any neighbourhood $N_{\zeta}$ of $\zeta \in \partial \mathbb B^d,$ 
\beq \label{pres-nbhd} \lim_{r \rar 1^{-}} P(r\eta, \zeta) =0, \quad \eta \in \partial \mathbb B^d \setminus N_{\zeta}. \eeq
\end{enumerate} 
 It is worth noting that  \eqref{P-nor} may be deduced from  \eqref{P-nor-2} and \eqref{P-sym} in view the polar coordinates (see \cite[Pg 13]{Ru-2}).
The analysis of this paper relies heavily on the existence and uniqueness of the solution of the Dirichlet problem for the unit ball (see \cite[Theorem 3.1.13]{Si}). Recall that $C(X)$ denotes the vector space of complex-valued continuous functions on a topological space $X.$
\begin{theorem} \label{DP}
For every $f \in C(\partial \mathbb B^d),$ there is a unique $u \in C(\overline{\mathbb B}^d),$ so that $u$ is harmonic on $\mathbb B^d$ and $u|_{\partial \mathbb B^d}=f.$ Moreover,
\beqn 
u(z)=\int_{\partial \mathbb B^d} P(z, \zeta)f(\zeta)d\sigma(\zeta), \quad z \in \mathbb B^d.
\eeqn
\end{theorem}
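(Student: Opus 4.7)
The plan is to define the candidate $u$ by the Poisson integral formula itself, verify harmonicity and the correct boundary values, and then dispense with uniqueness via the maximum principle. Set
\[
u(z) := \int_{\partial \mathbb B^d} P(z,\zeta) f(\zeta)\, d\sigma(\zeta), \quad z \in \mathbb B^d.
\]
For harmonicity, I would first observe that for each fixed $\zeta \in \partial \mathbb B^d$ the kernel $P(\cdot,\zeta)$ is harmonic on $\mathbb B^d$ when $\mathbb C^d$ is identified with $\mathbb R^{2d}$; this is the defining property of the Poisson kernel of the Euclidean ball of $\mathbb R^{2d}$, and can be verified directly by computing $\Delta_z P(z,\zeta)$. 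Because $P(z,\zeta)$ together with its first and second partials in $z$ are jointly continuous and uniformly bounded in $\zeta$ on any compact subset of $\mathbb B^d$, differentiation under the integral sign yields $\Delta u \equiv 0$ on $\mathbb B^d$.

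Next I would establish the boundary behaviour: for $\zeta_0 \in \partial \mathbb B^d$, $u(z) \to f(\zeta_0)$ as $z \to \zeta_0$ with $z \in \mathbb B^d$. By \eqref{P-nor-2},
\[
u(z) - f(\zeta_0) = \int_{\partial \mathbb B^d} P(z,\zeta)\bigl[f(\zeta) - f(\zeta_0)\bigr]\, d\sigma(\zeta).
\]
Given $\varepsilon > 0$, pick a neighbourhood $N_{\zeta_0}$ of $\zeta_0$ in $\partial \mathbb B^d$ on which $|f(\zeta) - f(\zeta_0)| < \varepsilon$. The integral over $N_{\zeta_0}$ is bounded by $\varepsilon$ using \eqref{P-nor-2} and the nonnegativity of $P$; the integral over the complement is controlled by $2\|f\|_\infty\, \sup_{\zeta \notin N_{\zeta_0}} P(z,\zeta)$, which tends to $0$ as $z \to \zeta_0$. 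This last claim is a slight strengthening of \eqref{pres-nbhd}, but is transparent from the explicit expression of $P$: for $z$ sufficiently close to $\zeta_0$ and $\zeta$ outside $N_{\zeta_0}$, the denominator $\|z-\zeta\|^{2d}$ is bounded below while $1 - \|z\|^2 \to 0$. Defining $u(\zeta_0) = f(\zeta_0)$ for $\zeta_0 \in \partial \mathbb B^d$ then gives an extension in $C(\overline{\mathbb B}^d)$.

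Uniqueness is routine: if $u_1$ and $u_2$ are two candidates, the difference $u_1 - u_2$ is harmonic on $\mathbb B^d$, continuous on $\overline{\mathbb B}^d$, and vanishes on $\partial \mathbb B^d$; the maximum principle on the bounded domain $\mathbb B^d$ forces $u_1 \equiv u_2$.

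The main obstacle is the boundary step, since \eqref{pres-nbhd} as stated only controls radial approach $r\eta \to \eta$ rather than arbitrary approach $z \to \zeta_0$ through $\mathbb B^d$. I would overcome this by appealing directly to the closed-form expression for $P$ to obtain uniform decay of $P(z,\zeta)$ outside neighbourhoods of the boundary target, which is genuinely stronger than what \eqref{pres-nbhd} literally asserts but requires no new ideas beyond inspection of the formula. Everything else — harmonicity by differentiation under the integral sign and uniqueness by the maximum principle — is standard.
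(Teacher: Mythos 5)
Your proof is correct and is the standard argument: the paper does not prove Theorem \ref{DP} at all but quotes it from \cite[Theorem 3.1.13]{Si}, and your Poisson-integral construction (harmonicity of $P(\cdot,\zeta)$ by differentiation under the integral, the approximate-identity boundary estimate via \eqref{P-nor-2}, and uniqueness from the maximum principle applied to the real and imaginary parts) is exactly the classical proof underlying that citation. You are also right that the boundary step needs decay of $P(z,\zeta)$ for arbitrary approach $z\to\zeta_0$ rather than the purely radial statement \eqref{pres-nbhd}, and your derivation of this from the explicit formula (denominator $\|z-\zeta\|^{2d}$ bounded below off $N_{\zeta_0}$, numerator $1-\|z\|^2\to 0$) closes that gap correctly.
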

As in the case of invariant Poisson kernel, one may associate with a finite positive Borel measure $\mu$ on $\partial \mathbb B^d$  the {\it Poisson integral} 
\beqn  P[\mu](z)=\int_{\partial \mathbb B^d} P(z, \zeta) d\mu(\zeta), \quad z \in \mathbb B^d, \eeqn 
and form the Dirichlet-type space $\mathscr D(\mu)$ (see Section 3 for a precise definition in a more general setting). 
It turns out that $P[\mu]$ can be defined for any complex Borel measure $\mu$, and in this case, $P[\mu]~\mbox{is~a~harmonic~function~on}~\mathbb B^d.$
The Riesz-Herglotz Theorem asserts that every positive harmonic function on the unit ball $\mathbb B^d$ is the Poisson integral of a finite positive Borel measure on $\partial \mathbb B^d$ (see, for instance, \cite[Corollary 6.15]{ABR}). 

The investigations in this paper are motivated by the following questions:

\begin{question}  \label{Q1.1}
Assume that $d$ is a positive integer and let $\mu$ be a finite positive Borel measure on $\partial \mathbb B^d.$ 
\begin{enumerate}
\item[(a)] Is $z_j,$ $j=1, \ldots, d$ a multiplier of the Dirichlet-type space $\mathscr D(\mu)$? 
\item[(b)] If $z_j,$ $j=1, \ldots, d$ is a multiplier of $\mathscr D(\mu),$ is the $d$-tuple $\mathscr M_z$ of multiplication operators $\mathscr M_{z_1}, \ldots, \mathscr M_{z_d}$ a joint $2$-isometry?
\end{enumerate}
\end{question}
It is worth noting that in dimension $d=1$, the answers to the above questions are affirmative \cite[Theorems 3.6 and 3.7]{Ri}. Moreover, these answers are intimately related to the following formula obtained in \cite[Proof of Theorem 4.1]{Ri} in case of $k=1$ (see Lemma \ref{Richter-char} for the deduction of the general case from this one).
\begin{theorem}[Richter's formula] 
For any finite positive Borel measure $\mu$ on the unit circle $\partial \mathbb D,$  we have
\beqn
 && \int_{\mathbb D}  (z^{k}p(z))' \,\overline{(z^{k}q(z))'} \, P[\mu](z) dA(z)  \\&=& \int_{\mathbb D}  p'(z)\,\overline{q'(z)} \, P[\mu](z) dA(z) + k \int_{\partial \mathbb D} p(\zeta)\, \overline{q(\zeta)} d\mu(\zeta), \!\!\! \quad p, q \in \mathbb C[z], ~ k \geqslant 1,
\eeqn
where $dA$ denotes the normalized area measure on the unit disc $\mathbb D,$ $\mathbb C[z]$ is the complex vector space of polynomials in $z,$ and $f'$ denotes the complex derivative of $f \in \mathbb C[z].$ 
\end{theorem}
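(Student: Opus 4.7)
The plan is to deduce the identity from its $k=1$ case---which is Richter's original statement in \cite[Proof of Theorem 4.1]{Ri}---by a short induction on $k$. The only structural input beyond the $k=1$ case is that $|\zeta|$ is constant on $\partial \mathbb D$, so the boundary contributions telescope neatly.

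Assume the formula at $k=1$ is established. Applying it with $(p, q)$ replaced by the polynomials $(z^{k-1}p, z^{k-1}q)$ yields
\[
\int_{\mathbb D} (z^k p)' \overline{(z^k q)'} P[\mu] \, dA = \int_{\mathbb D} (z^{k-1} p)' \overline{(z^{k-1} q)'} P[\mu] \, dA + \int_{\partial \mathbb D} z^{k-1} p \cdot \overline{z^{k-1} q} \, d\mu.
\]
Since $|\zeta|^{2(k-1)} = 1$ for $\zeta \in \partial \mathbb D$, the boundary integral equals $\int_{\partial \mathbb D} p \bar q \, d\mu$. Proceeding by induction on $k$ (with trivial base $k=0$), the inductive hypothesis rewrites the first term on the right as $\int_{\mathbb D} p'\overline{q'} P[\mu] \, dA + (k-1) \int_{\partial \mathbb D} p \bar q \, d\mu$, and summing yields the identity at level $k$. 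This is presumably the content of Lemma \ref{Richter-char}.

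If one wished to give a self-contained proof of the $k=1$ case, the natural approach would be to reduce by sesquilinearity to the monomial case $p = z^m, q = z^n$. Using Fubini with $P[\mu](z) = \int P(z, \zeta) d\mu(\zeta)$, the Fourier expansion $P(z, \zeta) = \sum_{n \geq 0} z^n \bar\zeta^n + \sum_{n \geq 1} \bar z^n \zeta^n$ valid for $|\zeta| = 1$, and the orthogonality $\int_{\mathbb D} z^\alpha \bar z^\beta dA(z) = \delta_{\alpha\beta}/(\alpha+1)$, one obtains the closed form
\[
\int_{\mathbb D} z^a \bar z^b P[\mu](z) \, dA(z) = \frac{c_{a-b}}{\max(a, b) + 1}, \qquad c_j := \int_{\partial \mathbb D} \zeta^j d\mu(\zeta).
\]
Substituting into the $k=1$ identity on monomials then reduces it to the elementary arithmetic fact $(1+m)(1+n) = (1+\min(m,n))(1+\max(m,n))$, once the common factor $c_{m-n}$ has been cleared.

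The main obstacle in the self-contained approach is the bookkeeping in the case analysis $m \lessgtr n$ when computing the closed form above; this is routine but must be done with care. A conceptually cleaner route---integration by parts exploiting the harmonicity $\Delta P[\mu] = 0$ via Green's identity---becomes available when $\mu$ has a smooth density, but in general would force one to regularize $\mu$ (e.g.\ by convolution on $\partial \mathbb D$) and pass to the limit using the continuity of the Poisson integral in the weak-$*$ topology. The induction-from-$k=1$ argument outlined first sidesteps all such technicalities.
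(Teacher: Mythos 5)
Your main argument is exactly the paper's route: the $k=1$ case is taken from Richter \cite[Proof of Theorem 4.1]{Ri}, and the general $k$ is obtained by the telescoping induction on $k$ using $|\zeta|=1$ on $\partial\mathbb D$, which is precisely the $d=1$ instance of the reduction in Lemma \ref{Richter-char} that the paper invokes. Your supplementary self-contained sketch of the $k=1$ case (Fourier expansion of the Poisson kernel, monomial orthogonality, and the identity $(1+m)(1+n)=(1+\min(m,n))(1+\max(m,n))$) is also correct, but it is not needed for, and does not change, the agreement with the paper's approach.
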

\begin{remark} 
By the formula above,  $\|z^kp\|^2_{\mathscr D(\mu)}$ is a linear polynomial in $k$ for every $p \in \mathbb C[z].$  
Further, if $p$ is a polynomial and $p(z)=p(0) + zq(z)$ for some polynomial $q,$ then by a couple of applications of Richter's formula, 
\beqn
&& \frac{1}{2}\Bigg(\int_{\mathbb D}  |(zp(z))'|^2  \, P[\mu](z) dA(z) -\int_{\mathbb D}  |p'(z)|^2  \, P[\mu](z) dA(z) \Bigg)\\
& \leqslant &   \int_{\partial \mathbb D} | q(\zeta)|^2 d\mu(\zeta)  
+ |p(0)|^2 \mu(\partial \mathbb D)\\
& = & \int_{\mathbb D}  |(zq(z))'|^2  \, P[\mu](z) dA(z) -\int_{\mathbb D}  |q'(z)|^2  \, P[\mu](z) dA(z) + |p(0)|^2 \mu(\partial \mathbb D) \\
& \leqslant & \int_{\mathbb D}  |p'(z)|^2  \, P[\mu](z) dA(z) + \|p\|^2_{H^2(\mathbb D)} \mu(\partial \mathbb D).
\eeqn
Since 
the polynomials are dense in $\mathscr D(\mu)$ (see \cite[Corollary 3.8(d)]{Ri}), $z$ is a multiplier for $\mathscr D(\mu)$ and  the operator $\mathscr M_z$ of the multiplication by coordinate function $z$ is a $2$-isometry.
This recovers \cite[Theorems 3.6 and 3.7]{Ri}.
\end{remark}
One of the main results of this paper is the ball analogue of Richter's formula (see Theorem \ref{Richter-several}). Our method of proof, that exploits the Dirichlet problem for the unit ball, Green's Theorem and an approximation result, differs from the one employed in \cite{Ri}.
This allows us to answer Question \ref{Q1.1}(b) in the affirmative (see Theorem \ref{R-F-role-1}). We also answer Question \ref{Q1.1}(a) for weighted surface area measures with bounded measurable weight functions (see Corollary \ref{Coro-multi}). 
In the remaining half of this paper, we discuss the ball-analogue of the trigonometric moment problem  (see Theorem \ref{CMP} and Appendix), and employ it to characterize joint $m$-isometries that admit the wandering subspace property (see Theorem \ref{main-thm}). These results are then combined with the theory of Dirichlet-type spaces developed in the first half to model spherical moments of joint $2$-isometries (see Corollary \ref{a-model}).

%
%

\section{Richter's formula in several variables}

Before we state the main result of this section, let us set some standard notations.
Let $\mathbb Z_+$ denote the set of non-negative integers. For a positive integer $d,$ the
$d$-fold Cartesian product of $\mathbb Z_+$ is denoted by $\mathbb Z_+^d.$ For $p =(p_1, \ldots,
p_d) \in \mathbb Z_+^d,$ set $|p|=p_1 + \cdots + p_d.$  If $q = (q_1, \ldots, q_d) \in  \mathbb Z_+^d,$ then we set $q!=\prod_{j=1}^d q_j!.$ We write
$p \leqslant q$ if $p_j \leqslant q_j$ for every $j=1, \ldots, d.$ 
Let $\mathbb C$ denote the set of complex numbers and let $\mathbb C^d$ denote the $d$-fold Cartesian product of $\mathbb C$. We reserve the notations $\Re(z), \overline{z}$ and $|z|$ for the real part, the complex conjugate and the modulus of the complex number $z,$ respectively. Let $\mathbb T$ denote the unit circle in $\mathbb C$ and let $\mathbb T^d$ be the $d$-fold Cartesian product of $\mathbb T.$ For $z=(z_1, \ldots, z_d), w=(w_1, \ldots, w_d) \in \mathbb C^d,$ let $\inp{z}{w}=\sum_{j=1}^dz_j \overline{w}_j$ and $\|z\|=\inp{z}{z}^{1/2}.$ For $j=1, \ldots, d,$ let $\varepsilon_j$ denote the $d$-tuple in $\mathbb C^d$ with $1$ in the $j$-th entry and zeros elsewhere. For $z \in \mathbb C^d$ and $\alpha \in \mathbb Z^d_+,$ let $z^{\alpha}$ denote the complex number $\prod_{j=1}^dz^{\alpha_j}_j.$
For $R > 0,$ $R\mathbb B^d$ denotes  the open ball in $\mathbb C^d$ centered at the origin and of radius $R.$ In case $R=1,$ we denote $R\mathbb B^d$ simply by $\mathbb B^d.$ The topological boundary and closure 
of $\mathbb B^d$ are denoted by $\partial \mathbb B^d$ and $\overline{\mathbb B}^d,$  respectively. 
The vector space of complex polynomials in $z_1, \ldots, z_d$ is denoted by $\mathbb C[z_1, \ldots, z_d].$ By abuse of notation, the space $\mathbb C[z_1, \ldots, z_d]|_{\Omega}$ of restriction of complex polynomials to a subset $\Omega$ of $\mathbb C^d$ is also denoted by $\mathbb C[z_1, \ldots, z_d].$ For a set $A,$ let $\ell^2(A)$ denote the Hilbert space of complex-valued functions on $A,$ which are square integrable with respect to the counting measure. 
Let $M_+(\partial \mathbb B^d)$ denote the cone of finite positive Borel measures on the unit sphere $\partial \mathbb B^d$ in $\mathbb C^d.$
Let $\nabla$ denote the gradient $\big(\frac{\partial }{\partial z_1}, \ldots, \frac{\partial }{\partial z_d}\big)$ and $\Delta$ denotes the complex Laplacian $\sum_{j=1}^d \frac{\partial^2 }{\partial \overline{z_j}\partial z_j}$ in dimension $d.$

We now state the main result of this section.
\begin{theorem} \label{Richter-several}
Let $\mu \in M_+(\partial \mathbb B^d).$  Then
\beq \label{formula-f}
&& \sum_{|\gamma|=k} \frac{|\gamma|!}{\gamma!}  \int_{\mathbb B^d} \inp{\nabla z^{\gamma}p(z)}{\nabla z^{\gamma}q(z)} P[\mu](z) dV(z)  \\ &=&  \int_{\mathbb B^d} \inp{\nabla p(z)}{\nabla q(z)} P[\mu](z) dV(z) ~+~ kd \int_{\partial \mathbb B^d} p(\zeta) \overline{q(\zeta)} d\mu(\zeta), \notag \\ 
 && \notag \hspace{5cm} p, q \in \mathbb C[z_1, \ldots, z_d], ~ k \in  \mathbb Z_+.
\eeq
\end{theorem}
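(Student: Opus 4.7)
The plan is to recast both sides of \eqref{formula-f} as integrals of complex Laplacians against $P[\mu]$, and then to apply Green's second identity after smoothing $P[\mu]$ by dilation.

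Recall that for holomorphic $f,g$ one has $\Delta(f\overline{g})=\langle\nabla f,\nabla g\rangle$. Combined with the multinomial identity
\begin{equation*}
\sum_{|\gamma|=k}\frac{|\gamma|!}{\gamma!}\,|z^\gamma|^{2}=\|z\|^{2k},
\end{equation*}
this allows one to sum the integrand on the left of \eqref{formula-f} over $\gamma$ and obtain $\Delta\bigl(\|z\|^{2k}\,p(z)\overline{q(z)}\bigr)$, while the integrand of the first term on the right is $\Delta(p\overline{q})$. Subtracting, the desired identity reduces to the single statement
\begin{equation*}
\int_{\mathbb{B}^d}\Delta\bigl[(\|z\|^{2k}-1)\,p(z)\overline{q(z)}\bigr]\,P[\mu](z)\,dV(z)=kd\int_{\partial\mathbb{B}^d}p(\zeta)\,\overline{q(\zeta)}\,d\mu(\zeta).
\end{equation*}
Set $\phi(z):=(\|z\|^{2k}-1)\,p(z)\overline{q(z)}$. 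This function vanishes on $\partial\mathbb{B}^d$, and a radial computation gives $\partial_n\phi(\zeta)=2k\,p(\zeta)\overline{q(\zeta)}$ for $\zeta\in\partial\mathbb{B}^d$.

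For each $s\in(0,1)$ set $h_s(z):=P[\mu](sz)$; this is harmonic in a neighbourhood of $\overline{\mathbb{B}^d}$ and hence smooth up to the boundary. I would apply Green's second identity to the pair $(\phi,h_s)$. Using the relation $\Delta_{\mathbb{R}}=4\Delta$ between the real and complex Laplacians together with the ratio $2d$ between the unnormalised surface area of $\partial\mathbb{B}^d$ and the unnormalised volume of $\mathbb{B}^d$, the identity reads
\begin{equation*}
\int_{\mathbb{B}^d}(\Delta\phi)\,h_s\,dV=\tfrac{d}{2}\int_{\partial\mathbb{B}^d}\bigl(h_s\,\partial_n\phi-\phi\,\partial_n h_s\bigr)\,d\sigma.
\end{equation*}
Since $\phi|_{\partial\mathbb{B}^d}=0$, the second boundary term drops out and, using $\partial_n\phi=2k\,p\overline{q}$, the remainder equals $kd\int_{\partial\mathbb{B}^d}h_s(\zeta)\,p(\zeta)\overline{q(\zeta)}\,d\sigma(\zeta)$.

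It remains to let $s\to 1^{-}$. On the right-hand side, Fubini together with the symmetry \eqref{P-sym} rewrites $\int h_s\,p\overline{q}\,d\sigma$ as $\int_{\partial\mathbb{B}^d}\tilde{u}(s\eta)\,d\mu(\eta)$, where $\tilde{u}$ is the harmonic extension of $p\overline{q}\in C(\partial\mathbb{B}^d)$ supplied by Theorem~\ref{DP}; continuity of $\tilde{u}$ on $\overline{\mathbb{B}^d}$, uniform boundedness by $\|p\overline{q}\|_\infty$ (via \eqref{P-nor-2}), and dominated convergence then yield the limit $\int p\overline{q}\,d\mu$. On the left-hand side, a Fubini exchange against $\mu$ combined with the substitution $w=sz$ shows that $\int(\Delta\phi)\,h_s\,dV\to\int(\Delta\phi)\,P[\mu]\,dV$, where the normalisation \eqref{P-nor} provides the uniform domination $\int_{\mathbb{B}^d}P(w,\eta)\,dV(w)=1$ needed for the passage to the limit. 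The part requiring the most care is this last limit, since $P[\mu]$ need not be smooth—or even bounded—up to $\partial\mathbb{B}^d$; the dilation trick $P[\mu]\mapsto h_s$ is precisely what sidesteps this, reducing the analytic content of the proof to weak$^*$ convergence of Poisson integrals on the boundary.
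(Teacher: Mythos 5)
Your argument is correct, and it reaches \eqref{formula-f} by a route that is organized differently from the paper's. The paper first reduces the general $k$ to $k=1$ by a telescoping/multinomial argument (Lemma \ref{Richter-char}), proves the dilated identity of Proposition \ref{Richter-char-R} on $R\mathbb B^d$ via Lemma \ref{Rajeev} with $C=R^2$ and Green's theorem, and then lets $R\to 1^-$ using the weak* convergence \eqref{weak-star-cgn-1}, obtained by transplanting the proof of Rudin's Theorem 3.3.4(c) to the Euclidean Poisson kernel. You instead treat all $k$ at once: the multinomial identity $\sum_{|\gamma|=k}\frac{|\gamma|!}{\gamma!}|z^\gamma|^2=\|z\|^{2k}$ together with $\langle\nabla f,\nabla g\rangle=\Delta(f\overline g)$ collapses the whole left-hand side minus the first right-hand term into $\int_{\mathbb B^d}\Delta\bigl[(\|z\|^{2k}-1)p\overline q\bigr]P[\mu]\,dV$, and you dilate the kernel ($h_s=P[\mu](s\cdot)$, smooth up to $\partial\mathbb B^d$) rather than shrinking the domain, so Green's identity on the unit ball immediately produces the boundary term $kd\int h_s\,p\overline q\,d\sigma$ with no stray $R^2$ factor. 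Your passage to the limit is also more self-contained: on the boundary side you derive the needed convergence directly from Fubini, the symmetry \eqref{P-sym}, Theorem \ref{DP} and dominated convergence (this is exactly the weak* statement \eqref{weak-star-cgn-1} tested against $p\overline q$, proved without appeal to Rudin), and on the volume side the substitution $w=sz$ plus the normalization \eqref{P-nor} gives the correct uniform domination (note that without that substitution a dominating function is not readily available, so this step is where your argument genuinely needs the change of variables; you should also record that each individual integral in \eqref{formula-f} is finite, since \eqref{P-nor} gives $P[\mu]\in L^1(dV)$, so the subtraction under the integral sign is legitimate). What the paper's longer route buys is the intermediate Proposition \ref{Richter-char-R}, valid for arbitrary holomorphic $f,g$ rather than polynomials, which is reused later (Proposition \ref{Richter-char-R-vector}, Theorem \ref{R-F-role-1}, Proposition \ref{prop-multi}), and the weak* convergence statement itself; your proof is shorter and more elementary for the theorem as stated but does not produce these reusable tools.
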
 

The proof of Theorem \ref{Richter-several}, as presented below, consists of several lemmas.

\begin{lemma} \label{Richter-char}
Let $\mu \in M_+(\partial \mathbb B^d).$  Then 
\eqref{formula-f} holds if and only if \eqref{formula-f} holds for $k=1,$ that is,
\beq
\label{Richter's measure}
&&  \sum_{j=1}^d   \int_{\mathbb B^d} \inp{\nabla z^{\alpha + \varepsilon_j}}{\nabla z^{\beta + \varepsilon_j}} P[\mu](z) dV(z)  \\
&=&   \int_{\mathbb B^d} \inp{\nabla z^{\alpha}}{\nabla z^{\beta}} P[\mu](z) dV(z) + d \int_{\partial \mathbb B^d} \zeta^{\alpha} \overline{\zeta}^{\beta} d\mu(\zeta), \quad \alpha, \beta \in \mathbb Z^d_+. \notag
\eeq
\end{lemma}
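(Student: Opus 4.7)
The ``only if'' direction is immediate: specialize \eqref{formula-f} to $k=1$ and to the monomial pair $p=z^{\alpha}$, $q=z^{\beta}$, and note that the multi-indices $\gamma$ with $|\gamma|=1$ are precisely $\varepsilon_1,\dots,\varepsilon_d$, with $\gamma!=1$; this gives exactly \eqref{Richter's measure}.

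For the ``if'' direction, I would first upgrade \eqref{Richter's measure} to the full $k=1$ case of \eqref{formula-f}: writing $p=\sum c_{\alpha} z^{\alpha}$ and $q=\sum c'_{\beta} z^{\beta}$ and expanding $\nabla(z_j p)$, $\nabla(z_j q)$ bilinearly, the sesquilinearity of both sides in $(p,q)$ reduces the $k=1$ identity to the monomial identity \eqref{Richter's measure}. The case $k=0$ is a tautology.

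The main step is induction on $k\geqslant 1$. Assuming the formula for $k-1$, I apply the already-established $k=1$ formula to the polynomials $(z^{\gamma}p,\, z^{\gamma}q)$ for each $\gamma$ with $|\gamma|=k-1$, multiply by $\tfrac{(k-1)!}{\gamma!}$, and sum over such $\gamma$. The left-hand side becomes
\begin{equation*}
\sum_{|\gamma|=k-1}\frac{(k-1)!}{\gamma!}\sum_{j=1}^{d}\int_{\mathbb B^{d}}\bigl\langle \nabla(z^{\gamma+\varepsilon_{j}}p),\,\nabla(z^{\gamma+\varepsilon_{j}}q)\bigr\rangle\,P[\mu]\,dV,
\end{equation*}
and after re-indexing $\delta=\gamma+\varepsilon_{j}$ I will use the combinatorial identity
\begin{equation*}
\sum_{j:\,\delta_{j}\geqslant 1}\frac{(k-1)!}{(\delta-\varepsilon_{j})!}=\frac{(k-1)!}{\delta!}\sum_{j=1}^{d}\delta_{j}=\frac{k!}{\delta!}
\end{equation*}
to recover the left-hand side of \eqref{formula-f}. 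On the right-hand side, the first term produced by the $k=1$ formula is $\sum_{|\gamma|=k-1}\tfrac{(k-1)!}{\gamma!}\int\langle\nabla(z^{\gamma}p),\nabla(z^{\gamma}q)\rangle P[\mu]\,dV$, which is exactly the left-hand side of \eqref{formula-f} at level $k-1$ and hence equals $\int\langle\nabla p,\nabla q\rangle P[\mu]\,dV+(k-1)d\int p\bar q\,d\mu$ by the induction hypothesis. The boundary term is
\begin{equation*}
d\int_{\partial\mathbb B^{d}}\Bigl(\sum_{|\gamma|=k-1}\frac{(k-1)!}{\gamma!}|\zeta^{\gamma}|^{2}\Bigr)p(\zeta)\overline{q(\zeta)}\,d\mu(\zeta),
\end{equation*}
and the multinomial theorem collapses the parenthesized sum to $\|\zeta\|^{2(k-1)}=1$ on $\partial\mathbb B^{d}$, contributing $d\int p\bar q\,d\mu$. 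Adding gives $kd\int p\bar q\,d\mu$ and completes the induction.

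The only delicate point is really the bookkeeping in the two combinatorial collapses, namely the regrouping via $\sum_{j}\delta_{j}=k$ on the left-hand side, and the multinomial identity on the boundary (which crucially exploits $\|\zeta\|=1$, so that this argument is tied to the fact that $\mu$ lives on the unit sphere). Both are elementary once the $k=1$ case is established, so I do not anticipate any serious obstacle beyond verifying the indices carefully.
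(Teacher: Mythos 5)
Your proof is correct and is essentially the paper's argument: the paper telescopes $Q_k-Q_0=\sum_{j=1}^k(Q_j-Q_{j-1})$ for fixed monomials, which is exactly your induction step, and it uses the same two combinatorial collapses (the regrouping identity $\sum_{j:\delta_j\geqslant 1}\tfrac{(k-1)!}{(\delta-\varepsilon_j)!}=\tfrac{k!}{\delta!}$ and the multinomial theorem with $\|\zeta\|=1$ on $\partial\mathbb B^d$). The only cosmetic difference is that you run the induction for general polynomials $p,q$ while the paper works with monomials and invokes linearity.
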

\begin{proof} The necessity part is clear. To see the sufficiency part, note that by the linearity of the integral, \eqref{Richter's measure} implies \eqref{formula-f} in case of $k=1.$ To see the general case, for fixed $\alpha, \beta \in \mathbb Z^d_+,$ let
\beqn
Q_k(z) = \sum_{|\gamma|=k} \frac{|\gamma|!}{\gamma!}  \inp{\nabla z^{\alpha + \gamma}}{\nabla z^{\beta + \gamma}}, \quad z \in \mathbb B^d, ~k \in \mathbb Z_+.
\eeqn 
Let $k$ be a positive integer and note that $Q_k -Q_0 = \sum_{j=1}^k (Q_j - Q_{j-1})$ 
and 
\beqn Q_j(z) &=& \sum_{i_1, \ldots, i_j=1}^d \inp{\nabla  z^{\alpha +  \varepsilon_{i_1} + \cdots + \varepsilon_{i_j}}}{\nabla z^{\beta +  \varepsilon_{i_1} + \cdots + \varepsilon_{i_j}}} \\ &=&
\sum_{l=1}^d \sum_{|\gamma|=j-1} \frac{|\gamma|!}{\gamma!}     \inp{\nabla  z^{\alpha + \gamma + \varepsilon_l}}{\nabla z^{\beta + \gamma + \varepsilon_l}}, \quad z \in \mathbb B^d, ~j=1, \ldots, k.
\eeqn
Thus, by \eqref{Richter's measure} and an application of the multinomial theorem,  
\beqn
&& \int_{\mathbb B^d} \big(Q_k(z) -Q_{0}(z)\big)P[\mu](z)dV(z) \\ &=&\int_{\mathbb B^d} \sum_{j=1}^k \big(Q_j(z) -Q_{j-1}(z)\big)P[\mu](z)dV(z) \\ &=& d \sum_{j=1}^k \sum_{|\gamma|=j-1} \frac{|\gamma|!}{\gamma!}   \int_{\partial \mathbb B^d} \zeta^{\alpha + \gamma} \overline{\zeta}^{\beta + \gamma} d\mu(\zeta) \\ &=& d \sum_{j=1}^k \int_{\partial \mathbb B^d} \Big(\sum_{|\gamma|= j-1} \frac{|\gamma|!}{\gamma!} \zeta^{\gamma} \overline{\zeta}^{\gamma}  \Big) \zeta^{\alpha} \overline{\zeta}^{\beta} d\mu(\zeta) \\ &=& kd\int_{\partial \mathbb B^d} \zeta^{\alpha} \overline{\zeta}^{\beta} d\mu(\zeta).
\eeqn
This completes the verification.
\end{proof}


The following identity relates the gradient and the complex Laplacian. 
\begin{lemma} \label{Rajeev} For holomorphic functions $f, g : \mathbb B^d \rar \mathbb C$
 and constant $C,$ $$\sum_{j=1}^d \inp{\nabla z_j f}{\nabla z_jg} - C\inp{\nabla f}{\nabla g} =\Delta ((\|z\|^2-C)f\overline{g}).$$
\end{lemma}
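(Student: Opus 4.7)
The identity is a pointwise computation for fixed $z$, so the plan is to expand both sides using the product rule and match terms; no function-theoretic input beyond holomorphy of $f,g$ is needed.

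First I would expand the left-hand side. For the inner gradient, since $f$ is holomorphic, $\partial(z_j f)/\partial z_i = \delta_{ij} f + z_j \,\partial f/\partial z_i$, and similarly for $g$. Plugging into $\sum_{j} \inp{\nabla(z_j f)}{\nabla(z_j g)} = \sum_{j,i}\bigl(\partial(z_j f)/\partial z_i\bigr)\overline{\bigl(\partial(z_j g)/\partial z_i\bigr)}$, the four cross-terms produced by $(\delta_{ij}f + z_j\partial f/\partial z_i)(\overline{\delta_{ij}g + z_j\partial g/\partial z_i})$ collapse to
\[
\sum_{j=1}^d \inp{\nabla(z_j f)}{\nabla(z_j g)} = d\, f\overline g + \overline g \sum_{j=1}^d z_j\,\frac{\partial f}{\partial z_j} + f\sum_{j=1}^d \overline{z_j}\,\overline{\frac{\partial g}{\partial z_j}} + \|z\|^2 \inp{\nabla f}{\nabla g}
\]
after using $\sum_{j}\delta_{ij}^2 = 1$ and $\sum_j |z_j|^2 = \|z\|^2$. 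Subtracting $C\inp{\nabla f}{\nabla g}$ replaces the last factor by $\|z\|^2 - C$.

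Next I would compute $\Delta\bigl((\|z\|^2-C)f\overline g\bigr)$. Since $f$ is holomorphic and $\overline g$ is antiholomorphic, I note $\partial f/\partial\overline{z_j} = 0$ and $\partial \overline g/\partial z_j = 0$; also $\partial \|z\|^2/\partial z_j = \overline{z_j}$ and $\partial \|z\|^2/\partial\overline{z_j} = z_j$. Then
\[
\frac{\partial}{\partial z_j}\bigl((\|z\|^2-C)f\overline g\bigr) = \overline{z_j} f\overline g + (\|z\|^2-C)\,\frac{\partial f}{\partial z_j}\,\overline g,
\]
and applying $\partial/\partial\overline{z_j}$ produces exactly the four terms
\[
f\overline g \;+\; \overline{z_j} f\,\overline{\frac{\partial g}{\partial z_j}} \;+\; z_j\,\frac{\partial f}{\partial z_j}\,\overline g \;+\; (\|z\|^2-C)\,\frac{\partial f}{\partial z_j}\,\overline{\frac{\partial g}{\partial z_j}}.
\]
Summing over $j$ from $1$ to $d$ yields precisely the expression obtained for the left-hand side in the previous paragraph, and the lemma follows.

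There is no real obstacle here; the verification is purely algebraic once holomorphy has been used to kill the $\partial/\partial\overline{z_j}$ derivatives of $f$ and the $\partial/\partial z_j$ derivatives of $\overline g$. The only bookkeeping subtlety is making sure that the $d\,f\overline g$ term on the left (coming from $\sum_j \delta_{ij}^2$) matches the $d\,f\overline g$ term on the right (coming from $\partial \overline{z_j}/\partial \overline{z_j} = 1$ summed over $j$), which is automatic.
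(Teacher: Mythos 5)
Your computation is correct: expanding $\partial(z_jf)/\partial z_i=\delta_{ij}f+z_j\partial f/\partial z_i$ on the left and applying the Wirtinger product rule to $(\|z\|^2-C)f\overline g$ on the right does produce matching terms, with holomorphy killing exactly the derivatives you say it kills. The paper, however, organizes the argument differently and more economically: it first records the single identity $\inp{\nabla h}{\nabla k}=\Delta(h\overline k)$ valid for any holomorphic $h,k$ (itself an immediate consequence of $\partial h/\partial\overline{z_j}=0$), then applies it with $h=z_jf$, $k=z_jg$ (which are again holomorphic) and with $h=f$, $k=g$, so that the left-hand side becomes $\sum_j\Delta(z_jf\,\overline{z_j}\,\overline g)-C\Delta(f\overline g)$, which equals $\Delta((\|z\|^2-C)f\overline g)$ purely by linearity of $\Delta$ and the observation $\sum_j z_jf\,\overline{z_jg}=\|z\|^2f\overline g$. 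What your route buys is complete explicitness — every term is displayed, so nothing is hidden in an auxiliary identity; what the paper's route buys is brevity and reusability: the identity $\Delta(h\overline k)=\inp{\nabla h}{\nabla k}$ isolates the only place holomorphy is used and eliminates the term-by-term bookkeeping (the cancellation of the $d\,f\overline g$ and first-order terms never has to be checked, since it is absorbed into the single identity applied to $z_jf$ and $z_jg$). Both proofs are valid; yours is simply the unpacked version of the paper's.
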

\begin{proof}
For holomorphic functions $f, g : \mathbb B^d \rar \mathbb C,$ note that $\inp{\nabla f}{\nabla g} =  \Delta(f\overline{g}).$ Thus
 \beqn
 \sum_{j=1}^d \inp{\nabla z_jf}{\nabla z_j g} - C\inp{\nabla f}{\nabla g}  &=&   \sum_{j=1}^d \Delta( z_j f \overline{z_j}\,\overline{g}) - C\Delta( f\overline{g}) \\ &=&   \Delta( (\|z\|^2-C)f \overline{g}),
 \eeqn
 where we used the linearity of $\Delta.$
\end{proof}

We next state a special case of Green's Theorem required in the proof of Theorem \ref{Richter-several}.
\begin{lemma}  
Let $f,g$ be $C^2$-functions on $\mathbb B^d$ so that $f, g$ and $D^{\alpha}f, D^{\alpha}g$ have continuous extensions to $\overline{\mathbb B}^d$ for all multi-indices, $\alpha$, with $|\alpha| \leqslant 2$. Then
\beq \label{Green} \notag
&& \int_{\mathbb B^d}\Big((\Delta f)(z) g(z) - (\Delta g)(z) f(z) \Big)dV(z) \\ &=&  \frac{d}{2}  \int_{\partial \mathbb B^d}\Big(\frac{\partial f(r\zeta)}{\partial r}\Big|_{r=1} g(\zeta) - \frac{\partial g(r\zeta)}{\partial r}\Big|_{r=1} f(\zeta) \Big)d\sigma(\zeta).
\eeq
\end{lemma}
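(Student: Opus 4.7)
The plan is to reduce the stated identity to the classical Green's second identity in $\mathbb R^{2d}$ by (a) rewriting the complex Laplacian in terms of the real Laplacian, and (b) translating the unnormalized Lebesgue and surface measures into the normalized measures $V$ and $\sigma$. The only real work is bookkeeping with the normalization constants.

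\textbf{Step 1 (complex vs.\ real Laplacian).} Writing $z_j = x_j + iy_j$, one computes
\[
\frac{\partial^2}{\partial \overline{z_j}\,\partial z_j} = \tfrac{1}{4}\Big(\tfrac{\partial^2}{\partial x_j^2} + \tfrac{\partial^2}{\partial y_j^2}\Big),
\]
so that $\Delta = \tfrac{1}{4}\Delta_{\mathbb R}$, where $\Delta_{\mathbb R}$ is the usual Laplacian on $\mathbb R^{2d}$. Also, the outward unit normal at $\zeta \in \partial\mathbb B^d$ is $\zeta$ itself (viewed in $\mathbb R^{2d}$), and hence the outward normal derivative is $\partial_\nu f(\zeta) = \frac{\partial}{\partial r}f(r\zeta)\big|_{r=1}$.

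\textbf{Step 2 (classical Green's identity).} The regularity assumptions on $f$ and $g$ are exactly what is needed to apply Green's second identity on $\mathbb B^d$ regarded as a bounded smooth domain in $\mathbb R^{2d}$:
\[
\int_{\mathbb B^d}\!\bigl(\Delta_{\mathbb R} f\cdot g - \Delta_{\mathbb R} g\cdot f\bigr)\,dV_0
= \int_{\partial \mathbb B^d}\!\Bigl(\tfrac{\partial f(r\zeta)}{\partial r}\Big|_{r=1} g(\zeta) - \tfrac{\partial g(r\zeta)}{\partial r}\Big|_{r=1} f(\zeta)\Bigr)\,d\sigma_0(\zeta),
\]
where $dV_0$ and $d\sigma_0$ denote unnormalized Lebesgue volume and surface measure, respectively.

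\textbf{Step 3 (normalization).} Using $\mathrm{Vol}(\mathbb B^d) = \pi^d/d!$ and $\mathrm{Area}(\partial\mathbb B^d) = 2\pi^d/(d-1)!$, we have $dV_0 = (\pi^d/d!)\,dV$ and $d\sigma_0 = (2\pi^d/(d-1)!)\,d\sigma$. Substituting $\Delta_{\mathbb R} = 4\Delta$ on the left and these conversions on both sides yields a factor of $\tfrac{4\pi^d}{d!}$ on the left against $\tfrac{2\pi^d}{(d-1)!}$ on the right; their ratio is $\tfrac{d}{2}$, which is exactly the constant appearing in \eqref{Green}. Dividing through gives the claimed identity.

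\textbf{Main obstacle.} There is no conceptual difficulty; the only thing that can go wrong is miscounting normalization factors. I would therefore double-check the ratio $\tfrac{2\pi^d/(d-1)!}{4\pi^d/d!} = \tfrac{d}{2}$ carefully. The smoothness hypotheses ensure that the classical Green identity is applicable with no boundary-limit subtleties, so there is nothing delicate beyond this arithmetic.
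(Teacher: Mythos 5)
Your proposal is correct and follows essentially the same route as the paper: both reduce the identity to the classical Green's second identity on the ball in $\mathbb R^{2d}$, using that the Euclidean Laplacian is $4\Delta$, that the outward normal derivative is the radial derivative, and that the normalization of $V$ and $\sigma$ (equivalently, the volume-to-surface-area ratio $1/2d$) produces the constant $d/2$. Your arithmetic with $\pi^d/d!$ and $2\pi^d/(d-1)!$ checks out, so there is nothing to fix.
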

\begin{proof}
Recall the well-known fact that the ratio of the volume of $\mathbb B^d$ and the surface area of $\partial \mathbb B^d$ equals $1/2d$ (see \cite[Pg 291]{Si-0}). This combined with the facts that 
  the (Euclidean) Laplacian is $4$ times the complex Laplacian $\Delta$ and the outward pointing derivative is the radial derivative, the desired conclusion may be deduced from \cite[Theorem 1.4.1]{Si} (where non-normalized volume measure and surface area measure have been used).
\end{proof}

The following is the ball analogue of \cite[Lemma 3.4]{Ri}.
\begin{proposition} \label{Richter-char-R}
Let $\mu$ be a complex Borel measure on $\partial \mathbb B^d$ and $0 \leqslant R < 1.$ Then, for holomorphic functions $f, g :\mathbb B^d \rar \mathbb C,$ 
\beqn
&& \notag \sum_{j=1}^d   \int_{R\mathbb B^d} \inp{\nabla z_jf}{\nabla z_jg} P[\mu](z) dV(z)  -
 R^2 \int_{R\mathbb B^d} \inp{\nabla f}{\nabla g} P[\mu](z) dV(z) \\ &=& dR^{2d} \int_{\partial \mathbb B^d} f(R\zeta) \overline{g(R\zeta)} P[\mu](R\zeta)  d\sigma(\zeta). 
\eeqn
\end{proposition}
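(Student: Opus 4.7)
The plan is to recognize the integrand on the left-hand side as a pure Laplacian via Lemma \ref{Rajeev}, and then to use harmonicity of the Poisson integral together with Green's theorem (Lemma 2.4) to convert the volume integral on $R\mathbb B^d$ into a boundary integral on $\partial(R\mathbb B^d)$. Concretely, I set
\begin{equation*}
F(z) = (\|z\|^2 - R^2)\, f(z)\,\overline{g(z)}, \qquad G(z) = P[\mu](z),
\end{equation*}
and observe that Lemma \ref{Rajeev} with $C = R^2$ identifies the left-hand side of the proposition with $\int_{R\mathbb B^d} (\Delta F)(z)\, G(z)\, dV(z)$. Two crucial features of this pair are that $G$ is harmonic on $\mathbb B^d$ (so $\Delta G \equiv 0$) and that $F$ vanishes identically on $\partial(R\mathbb B^d)$ (since $\|z\|^2 = R^2$ there); both properties will be used once Green's theorem is applied.

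Since Lemma 2.4 is stated on the unit ball, I would reduce to it by the dilation $z = Rw$. Set $\tilde F(w) = F(Rw)$ and $\tilde G(w) = G(Rw)$, both of which extend smoothly to $\overline{\mathbb B}^d$ because $R\overline{\mathbb B}^d$ sits strictly inside $\mathbb B^d$, where $f$, $g$ are holomorphic and $P[\mu]$ is harmonic hence $C^\infty$. A direct chain-rule computation gives $\Delta \tilde F(w) = R^2 (\Delta F)(Rw)$ and $\Delta \tilde G(w) = 0$, while the radial derivative satisfies $\partial_r \tilde F(r\zeta)\big|_{r=1} = R\, \partial_r F(r\zeta)\big|_{r=R} = 2R^2\, f(R\zeta)\overline{g(R\zeta)}$, using that $F(r\zeta) = (r^2-R^2) f(r\zeta)\overline{g(r\zeta)}$ and that the factor $(r^2 - R^2)$ drops out at $r=R$. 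Since $\tilde F(\zeta) = F(R\zeta) = 0$ for $\zeta \in \partial \mathbb B^d$, one of the two boundary terms in Green's identity vanishes.

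Plugging these computations into Lemma 2.4 yields
\begin{equation*}
R^2 \int_{\mathbb B^d} (\Delta F)(Rw)\, G(Rw)\, dV(w) \;=\; \frac{d}{2} \int_{\partial \mathbb B^d} 2R^2\, f(R\zeta)\overline{g(R\zeta)}\, P[\mu](R\zeta)\, d\sigma(\zeta).
\end{equation*}
Using the dilation change of variable for the normalized volume, $\int_{\mathbb B^d} \phi(Rw)\, dV(w) = R^{-2d} \int_{R\mathbb B^d} \phi(z)\, dV(z)$, the left-hand side becomes $R^{2-2d} \int_{R\mathbb B^d} \Delta F\, G\, dV$, and rearranging produces the desired factor $d R^{2d}$ on the right. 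Combining with Step~1 completes the proof.

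The main obstacle is bookkeeping: correctly tracking the scaling factors coming from (i) the chain rule on the complex Laplacian ($R^2$), (ii) the radial derivative ($R$), (iii) the normalized volume on $R\mathbb B^d$ versus $\mathbb B^d$ ($R^{2d}$), and (iv) the factor $d/2$ built into Lemma 2.4 via the volume-to-surface-area ratio $1/(2d)$. Once these powers of $R$ and the dimensional constant $d$ are assembled, the identity falls out; the only genuinely analytic input is the harmonicity of $P[\mu]$, which reduces Green's identity to a single boundary term.
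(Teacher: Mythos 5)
Your argument is correct and follows essentially the same route as the paper: rewrite the left-hand side as $\int_{R\mathbb B^d}\Delta\big((\|z\|^2-R^2)f\overline g\big)P[\mu]\,dV$ via Lemma \ref{Rajeev}, dilate to the unit ball so that Green's theorem (with the harmonicity of $P[\mu]$ and the vanishing of the dilated factor on $\partial\mathbb B^d$) leaves a single boundary term, and track the powers of $R$. Your bookkeeping of the factors $R^2$, $R^{2d}$ and $d/2$ matches the paper's computation exactly.
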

\begin{proof}
For $h :\mathbb B^d \rar \mathbb C$ and $0 \leqslant R < 1,$ let $h_{\!_R}(z)=h(Rz),$ $z \in \overline{\mathbb B}^d,$ and note that  $\Delta h_{\!_R}(z)=R^2(\Delta h)(Rz).$ 
Since $P[\mu]$ is a harmonic function on $\mathbb B^d,$ by Lemma \ref{Rajeev} (with $C=R^2$),
\beqn
&& \!\! \!\! \sum_{j=1}^d \int_{R\mathbb B^d} \inp{\nabla z_j f}{\nabla z_jg}P[\mu](z)dV(z) - R^2 \int_{R\mathbb B^d}  \inp{\nabla f}{\nabla g} P[\mu](z)dV(z) \\ &=&\int_{R\mathbb B^d}  \Delta ((\|z\|^2-R^2)f\overline{g})P[\mu](z)dV(z) \\
&=& R^{2d} \int_{\mathbb B^d}   \Delta ((\|w\|^2-1)f_{\!_R}\overline{g}_{\!_R})P[\mu](Rw)dV(w) \\
&\overset{\eqref{Green}}=& \frac{dR^{2d}}{2} \int_{\partial \mathbb B^d}  \frac{\partial}{\partial r} \Big((r^2-1) f_{\!_R}(r\zeta)\overline{g}_{\!_R}(r\zeta)\Big)\Big|_{r=1} P[\mu](R\zeta) d\sigma(\zeta) \\
&=& dR^{2d} \int_{\partial \mathbb B^d}  f_{\!_R}(\zeta)\overline{g}_{\!_R}(\zeta) P[\mu](R\zeta)  d\sigma(\zeta).
\eeqn 
This completes the proof.
\end{proof}
\begin{remark} \label{R-F-fails} Let $\mu \in M_+(\partial \mathbb B^d).$ Then
the invariant Poisson integral $P_{\iota}[\mu]$ is an $\mathcal M$-harmonic function (see \cite[Section 4.3]{Ru-2}), which is not necessarily harmonic. The argument above shows that if Proposition \ref{Richter-char-R} holds for $P_{\iota}[\mu],$ then for $0 < R < 1$ and every holomorphic functions $f, g :\mathbb B^d \rar \mathbb C,$ 
\beqn
\int_{\mathbb B^d}   (\|w\|^2-1)f_{\!_R}(w)\overline{g}_{\!_R}(w) \Delta P_{\iota}[\mu](Rw)dV(w)=0.
\eeqn
Now, it may be concluded from Stone-Weierstrass and Riesz representation theorems that Proposition \ref{Richter-char-R} fails  for the invariant Poisson integral $P_{\iota}[\mu],$ unless $P_{\iota}[\mu]$ is harmonic. Similarly, one may see that Richter's formula fails for the invariant Poisson integral in general.
Since the invariant Poisson kernel is naturally linked to the theory of M$\ddot{\mbox{o}}$bius-harmonic (or $\mathcal M$-harmonic) functions arising from the notion of the invariant Laplacian (refer to \cite{Ru-2}), one may tempt to replace the gradient in the definition of invariant Dirichlet-type spaces by the invariant gradient. 
However, this does not yield a successful analogue of Dirichlet-type spaces even in dimension $d=1$ in the sense that the associated multiplication operator is not a $2$-isometry.
\end{remark} 

\begin{proof}[Proof of Theorem \ref{Richter-several}]
In view of Lemma \ref{Richter-char}, it suffices to derive \eqref{Richter's measure}. 
For $0 < R < 1,$ let $\mu_{\!_R}$ be the weighted surface area measure with weight $\mathsf w_{\!_R}(\zeta) = P[\mu](R\zeta),$ $\zeta \in \partial \mathbb B^d.$ 
Note that the proof of \cite[Theorem 3.3.4(c)]{Ru-2} (asserted for invariant Poisson kernel $P_{\iota}(z, \zeta)$) essentially relies on variants of \eqref{P-nor-2}, \eqref{P-sym} and \eqref{pres-nbhd} for $P_{\iota}(z, \zeta).$ Hence
\cite[Theorem 3.3.4(c)]{Ru-2}  extends to the Euclidean Poisson kernel $P(z, \zeta)$ (with the same proof), and we may conclude that  
\beq
\label{weak-star-cgn-1}
\lim_{R \rar 1^{-}} \mu_{\!_R} = \mu~\mbox{in the weak*-topology of the dual of}~C(\partial \mathbb B^d).
\eeq 
Applying Proposition \ref{Richter-char-R} to $f(z)=z^{\alpha}$ and $g(z)=z^{\beta}$ for $\alpha, \beta \in \mathbb Z^d_+$ together with the dominated convergence theorem (twice) and \eqref{weak-star-cgn-1}  yields \eqref{Richter's measure}. 
\end{proof}

Here is a special case of Theorem \ref{Richter-several} (the case in which $\mu \in M_+(\partial \mathbb B^d)$ is the Dirac delta measure with point mass) of  independent interest.
\begin{corollary} 
For every $\zeta \in \partial \mathbb B^d,$  we have
\beqn
&& \sum_{|\gamma|=k} \frac{|\gamma|!}{\gamma!}  \int_{\mathbb B^d} \inp{\nabla z^{\gamma}p(z)}{\nabla z^{\gamma}q(z)} P(z, \zeta) dV(z)  \\ &=&  \int_{\mathbb B^d} \inp{\nabla p(z)}{\nabla q(z)} P(z, \zeta) dV(z) ~+~ kd \, p(\zeta) \overline{q(\zeta)}, \notag \\ 
 && \notag \hspace{5cm} p, q \in \mathbb C[z_1, \ldots, z_d], \quad k \in \mathbb Z_+.
\eeqn
\end{corollary}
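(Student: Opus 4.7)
The corollary is an immediate specialization of Theorem \ref{Richter-several}. My plan is to apply that theorem to the Dirac measure $\mu = \delta_\zeta$ concentrated at the point $\zeta \in \partial \mathbb B^d$, which is visibly an element of $M_+(\partial \mathbb B^d)$.

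Under this choice, the two side ingredients of \eqref{formula-f} simplify in the expected way. First, the Poisson integral becomes
\[
P[\delta_\zeta](z) = \int_{\partial \mathbb B^d} P(z, \eta)\, d\delta_\zeta(\eta) = P(z, \zeta), \quad z \in \mathbb B^d,
\]
so every volume integral against $P[\mu]$ in \eqref{formula-f} turns into the corresponding volume integral against $P(z,\zeta)$. Second, the boundary integral reduces by definition of the Dirac measure:
\[
\int_{\partial \mathbb B^d} p(\eta)\, \overline{q(\eta)}\, d\delta_\zeta(\eta) = p(\zeta)\, \overline{q(\zeta)}.
\]
Substituting these two identities into \eqref{formula-f} produces exactly the asserted formula for every $p, q \in \mathbb C[z_1, \ldots, z_d]$ and every $k \in \mathbb Z_+$.

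There is essentially no obstacle here: once Theorem \ref{Richter-several} is in hand, the corollary is a one-line deduction. The only thing worth double-checking is that $\delta_\zeta$ genuinely lies in $M_+(\partial \mathbb B^d)$ and that Poisson integration commutes with evaluation against a point mass, both of which are standard.
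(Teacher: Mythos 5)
Your proposal is correct and is exactly the paper's argument: the corollary is stated there as the special case of Theorem \ref{Richter-several} in which $\mu$ is the Dirac measure at $\zeta$, using $P[\delta_\zeta](z)=P(z,\zeta)$ and $\int_{\partial\mathbb B^d}p\,\overline{q}\,d\delta_\zeta=p(\zeta)\overline{q(\zeta)}$. Nothing further is needed.
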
 

In the following section, we see the role of Richter's formula (including Proposition \ref{Richter-char-R}) in the study of Dirichlet-type spaces. In particular, we answer major half of Question \ref{Q1.1}. 

\section{Joint $m$-isometries and Dirichlet-type spaces}

Let $\mathcal H$ be a complex Hilbert space. Let $\inp{\cdot}{\cdot}_{\mathcal H}$ and $\|\cdot\|_{\mathcal H}$ denote the inner-product  and norm on $\mathcal H,$ respectively. If no confusion occurs, we omit the subscript $\mathcal H$ from these notations.
Let $\mathcal B(\mathcal H)$ denote the $C^*$-algebra of bounded linear operators on $\mathcal H.$ By a {\it commuting $d$-tuple $T$ on $\mathcal H,$} we understand the $d$-tuple $(T_1, \ldots, T_d)$ consisting of commuting operators $T_1, \ldots, T_d$ in $\mathcal B(\mathcal H).$ 
Given a commuting $d$-tuple $T$ on ${\mathcal
H}$ and $\alpha=(\alpha_1, \ldots, \alpha_d) \in \mathbb Z^d_+,$  let $T^*$ denote the commuting $d$-tuple $(T^*_1, \ldots, T^*_d)$ and $T^{\alpha}$ denote the operator $T^{\alpha_1}_1 \cdots T^{\alpha_d}_d$ in $\mathcal B(\mathcal H).$ If $\mathcal M$ is a subspace of $\mathcal H$ such that $T_j \mathcal M \subseteq \mathcal M$ for every $j=1, \ldots, d,$ then we use the notation $T|_{\mathcal M}$  to denote the commuting $d$-tuple $(T_1|_{\mathcal M}, \ldots, T_d|_{\mathcal M})$ on $\mathcal M.$
With every commuting $d$-tuple $T$ on $\mathcal H,$ we associate the positive map $Q_T : \mathcal B(\mathcal H) \rar \mathcal B(\mathcal H)$ given by  \beqn  Q_T(X) \mathrel{\mathop:}=
\sum_{j=1}^d T^*_jXT_j, \quad X \in B(\mathcal H).\eeqn 
The operator $Q^n_T$
is inductively defined for all integers $n \geqslant 0$ through
the relations $Q^0_T(X)=X$ and $Q^n_T(X)=Q_T(Q^{n-1}_T(X)),$ $n \geqslant 1,$ $X \in B(\mathcal H).$ 
It is easy to see
that \beq \label{QT-n} Q^n_T(I)=\sum_{|\alpha|=n}\frac{|\alpha|!}{\alpha!} T^{*\alpha} T^{\alpha}, \quad n \in \mathbb Z_+. \eeq
For $m \in \mathbb Z_+,$ let \beq \label{BMQT} B_m(T) :=
\sum_{n=0}^m (-1)^{n} \binom{m}{n}  Q^{n}_T(I). \eeq   If
$B_m(T)=0$, then $T$ is said to be a {\it joint $m$-isometry}. In $d=1,$ this notion first appeared in \cite{Ag}, and in the general case, it was formally introduced in \cite{GR}. The reader is referred to \cite{At, EP, GR, CY, HM, EL, Gu, Le} for examples and basic properties of joint $m$-isometries. We refer to joint $1$-isometry simply as {\it joint isometry or spherical isometry}.  Following \cite{Sh}, we say that $T$ is {\it joint $m$-concave} (resp. {\it joint $m$-convex}) if $(-1)^mB_m(T) \leqslant 0$ (resp. $(-1)^mB_m(T) \geqslant 0$).

Before we define Dirichlet-type spaces, we discuss some examples of joint $m$-isometries arising from a family of Hardy-Besov spaces. 
\begin{example} \label{exam-classical}
For a real number $p>0$, let 
\beqn
\kappa_p(z, w)=\frac{1}{(1- \inp{z}{w})^p}, \quad z, w \in
\mathbb B^d.
\eeqn
Note that $\kappa_p$ defines a positive definite kernel on $\mathbb B^d,$ which is holomorphic in $z$ and conjugate holomorphic in $w$.
Thus we may associate a reproducing kernel Hilbert space $\mathscr H_{p}$ of
holomorphic functions on the unit ball $\mathbb B^d$ with the reproducing kernel $\kappa_p$ (refer to \cite{PR} for this construction).  
Note that the monomials are orthogonal in $\mathscr H_p$ (see, for instance, \cite[Lemma 2.14]{CY}). Indeed, for any $\alpha, \beta  \in \mathbb Z^d_+,$ we have
\beq \label{Diri-formula}
\inp{z^{\alpha}}{z^{\beta}}_{\mathscr H_p} &=& \begin{cases}  \frac{\alpha! \Gamma(p)}{\Gamma(|\alpha|+p)} & \mbox{if~} \alpha = \beta, \\
0 & \mbox{otherwise},
\end{cases}
\eeq
where $\Gamma$ denotes the gamma function (see \cite[Equation (4.1)]{GR}).
In particular, for every $p > 0,$
$\mathscr H_{p}$ is contractively embedded into $\mathscr H_{p+1}.$
The reproducing kernel Hilbert spaces $\mathscr H_{d}$ and
$\mathscr H_{1}$ are the {\it Hardy space} $H^2(\mathbb B^d)$ and
the {\it Drury-Arveson
space} $H^2_d,$ respectively. It may be deduced from \eqref{Diri-formula} that $z_j,$ $j=1, \ldots, d,$ is a multiplier for $\mathscr H_p.$ 
Let $\mathscr M_{z, p}$ denote the $d$-tuple of multiplication operators $\mathscr M_{z_1}, \ldots, \mathscr M_{z_d}$ in $\mathcal B(\mathscr H_p).$ It turns out that $\mathscr M_{z, p}$ is a joint $m$-isometry if and only if $p$ is a positive integer,
$p \leqslant d,$ and 
$m \geqslant d - p + 1$ 
(see \cite[Theorem 4.2]{GR}; see also the discussion following \cite[Theorem 5.3]{CY}).  
In particular, the multiplication tuple $\mathscr M_{z, d}$ ({\it
Szeg\"o $d$-shift}) turns out to be a joint isometry, while
$\mathscr M_{z, 1}$ ({\it Drury-Arveson $d$-shift}) is a  joint $d$-isometry.
\eop
\end{example}


Following \cite{O} and considering the role of vector-valued Dirichlet-type spaces in the model theory for arbitrary $2$-isometries, we have chosen to work in the vector-valued set-up.

For a complex separable Hilbert space $\mathcal M,$ let $M_+(\partial \mathbb B^d, \mathcal B(\mathcal M))$ denote the cone of 
 $\mathcal B(\mathcal M)$-valued semispectral measures $F$ on the unit sphere $\partial \mathbb B^d$ in $\mathbb C^d.$ In case $\mathcal M=\mathbb C,$ following our earlier notation, we denote $M_+(\partial \mathbb B^d, \mathcal B(\mathcal M))$ simply by $M_+(\partial \mathbb B^d).$
Here by a $\mathcal B(\mathcal M)$-valued {\it semispectral measure} $F$ on $\partial \mathbb B^d,$ we understand a set function on $\partial \mathbb B^d$ which is finitely additive with values being positive operators in $\mathcal B(\mathcal M)$ such that $\inp{F(\cdot)x}{y}$  defines a complex regular Borel measure on $\partial \mathbb B^d$ for every $x, y \in \mathcal M.$ For $F \in M_+(\partial \mathbb B^d, \mathcal B(\mathcal M)),$
consider now the {\it Poisson integral} $P[F]$ of $F$ given by
\beqn
P[F](z)=\int_{\partial \mathbb B^d} P(z, \zeta) dF(\zeta), \quad z \in \mathbb B^d.
\eeqn
Let $H^2_{\mathcal M}(\mathbb B^d)$ denote the Hardy space  of $\mathcal M$-valued holomorphic functions $f(z)=\sum_{\alpha \in \mathbb Z^d_+} \mathsf a_{\alpha}z^{\alpha}$ on $\mathbb B^d$ endowed with the norm governed by
\beqn
\|f\|^2_{H^2_{\mathcal M}(\mathbb B^d)} = \sum_{\alpha \in \mathbb Z^d_+} \|\mathsf a_{\alpha}\|^2_{\mathcal M} \|z^{\alpha}\|^2_{H^2(\mathbb B^d)},
\eeqn 
where $\{\mathsf a_{\alpha}\}_{\alpha \in \mathbb Z^d_+} \subseteq \mathcal M.$
The {\it Dirichlet-type space $\mathscr D(F)$ associated with $F$} is defined as the complex vector space of $\mathcal M$-valued holomorphic functions $f : \mathbb B^d \rar \mathcal M$ such that $\|f\|_{\mathscr D(F)}$ is finite, where
\beqn
\|f\|^2_{\mathscr D(F)} := \|f\|^2_{H^2_{\mathcal M}(\mathbb B^d)} +  \frac{1}{d}\int_{\mathbb B^d} \sum_{j=1}^d \Big \langle{P[F](z) \frac{\partial f}{\partial z_j}},{\frac{\partial f}{\partial z_j}} \Big \rangle_{\mathcal M} dV(z),
\eeqn
where, by abuse of notation, $\frac{\partial f}{\partial z_j}$ evaluated at the point $z$ is denoted by $\frac{\partial f}{\partial z_j}.$


\begin{remark} \label{Hardy-inclusion} 
We note the following:
\begin{enumerate}
\item[(i)] The complex vector space $\mathscr D(F)$  contains all $\mathcal M$-valued polynomials in $z_1, \ldots, z_d,$ and it is a subspace of the Hardy space $H^2_{\mathcal M}(\mathbb B^d).$  If for $j=1, \ldots, d,$ $\mathscr M_{z_j}$ denotes the linear operator of multiplication by the coordinate function $z_j,$ then
the $d$-tuple $\mathscr M_z=(\mathscr M_{z_1}, \ldots, \mathscr M_{z_d})$ is defined and commuting on the space of $\mathcal M$-valued polynomials. 
\item[(ii)]  As in the one variable scalar case (refer to \cite[Chapter 1]{EKMR}), it may be deduced from the Fatou's Lemma and the reproducing property of $H^2_{\mathcal M}(\mathbb B^d)$ that $\mathscr D(F)$ is a 
reproducing kernel Hilbert space. Further, every $z \in \mathbb B^d$ is a bounded point evaluation for $\mathscr D(F)$, and if $\kappa : \mathbb B^d \times \mathbb B^d \rar \mathcal B(\mathcal M)$ denotes the reproducing kernel for $\mathscr D(F),$ then $\kappa(\cdot, 0)=I_{\mathcal M},$ where $I_{\mathcal M}$ denotes the identity operator on $\mathcal M.$  The latter one follows from $\inp{f}{x}_{\mathscr D(F)}=\inp{f}{x}_{H^2_{\mathcal M}(\mathbb B^d)}= \inp{f(0)}{x}_{\mathcal M},$ $f \in \mathscr D(F),$ $x \in \mathcal M.$ 
\item[(iii)] Unlike the one-dimensional case, there are various possibilities for the definition of a Dirichlet-type space in general. Indeed,  in the definition of $\mathscr D(F),$ one  may replace $H^2_{\mathcal M}(\mathbb B^d)$ by any Hilbert space $\mathscr H$ of $\mathcal M$-valued holomorphic functions for which the $d$-tuple $\mathscr M_z$ of multiplication operators on $\mathscr H$ is a joint isometry (cf. \cite[Equation (1.4)]{Ry}). However, we do not take up this notion here.
\end{enumerate}
\end{remark}



The following vector-valued analogue of Proposition \ref{Richter-char-R} is immediate from its scalar counterpart.

\begin{proposition} \label{Richter-char-R-vector}
Let $F \in M_+(\partial \mathbb B^d, \mathcal M)$ and $0 \leqslant R < 1.$ Then, for every $\mathcal M$-valued holomorphic functions $f, g :\mathbb B^d \rar \mathcal  M,$ we have
\beqn
 \displaystyle \sum_{i, j=1}^d   \int_{R\mathbb B^d} \!\!\! \Big \langle{P[F](z)\frac{\partial z_if}{\partial z_j}},{\frac{\partial z_ig}{\partial z_j}}\Big \rangle  dV(z)  -  R^2 \sum_{j=1}^d
 \int_{R\mathbb B^d} \!\!\! \Big \langle{P[F](z)\frac{\partial f}{\partial z_j}},{\frac{\partial g}{\partial z_j}} \Big \rangle  dV(z) \\ ~=~ dR^{2d} \int_{\partial \mathbb B^d} \inp{P[F](R\zeta)f(R\zeta)}{g(R\zeta)}  d\sigma(\zeta). \quad \quad \quad \quad
\eeqn
\end{proposition}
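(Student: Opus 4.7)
The plan is to reduce the vector-valued identity to the scalar Proposition \ref{Richter-char-R}. The starting observation is that for any fixed $x, y \in \mathcal M$, the set function $\mu_{x,y}(E) := \langle F(E) x, y\rangle_{\mathcal M}$ is a complex regular Borel measure on $\partial \mathbb B^d$, and by the very definition of $P[F]$ one has $\langle P[F](z)\, x, y\rangle_{\mathcal M} = P[\mu_{x,y}](z)$ for every $z \in \mathbb B^d$. Thus every operator-valued integrand appearing in the statement, after pairing against test vectors in $\mathcal M$, becomes a scalar Poisson integral of a complex Borel measure; this is exactly the setting of Proposition \ref{Richter-char-R}.

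Concretely, I would first verify the identity for $\mathcal M$-valued holomorphic functions of the rank-one product form $f = \phi\, x$ and $g = \psi\, y$, with $\phi, \psi : \mathbb B^d \to \mathbb C$ holomorphic and $x, y \in \mathcal M$. For such $f, g$ one has $\frac{\partial(z_i f)}{\partial z_j} = \frac{\partial(z_i\phi)}{\partial z_j}\, x$ and similarly for $g$, so each integrand on the left-hand side factors as
\[
\Big\langle P[F](z) \frac{\partial(z_i f)}{\partial z_j},\, \frac{\partial(z_i g)}{\partial z_j}\Big\rangle_{\mathcal M} \;=\; \frac{\partial(z_i\phi)}{\partial z_j}\,\overline{\frac{\partial(z_i\psi)}{\partial z_j}}\, P[\mu_{x,y}](z),
\]
and the boundary integrand factors analogously as $\phi(R\zeta)\overline{\psi(R\zeta)}\,P[\mu_{x,y}](R\zeta)$. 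The identity for this particular $(f,g)$ is then a direct instance of Proposition \ref{Richter-char-R} applied to $\phi, \psi$ and the complex measure $\mu_{x,y}$. Since both sides of the claimed identity are sesquilinear in $(f, g)$, this extends immediately by finite linear combinations to every pair of $\mathcal M$-valued polynomials in $z_1, \ldots, z_d$.

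To pass from polynomials to arbitrary holomorphic $f, g : \mathbb B^d \to \mathcal M$, fix some $R' \in (R, 1)$ and let $f_N, g_N$ be the partial sums of degree $N$ of the Taylor expansions of $f, g$ about the origin. These are $\mathcal M$-valued polynomials, and by the standard Cauchy estimates on the polydisc sitting inside $R'\mathbb B^d$ one has $f_N \to f$ and $g_N \to g$ together with all first-order partial derivatives, uniformly on the compact set $\overline{R\mathbb B^d}$. Since the Poisson kernel $P(z, \zeta)$ is uniformly bounded for $(z, \zeta) \in \overline{R\mathbb B^d}\times \partial \mathbb B^d$, the operator $P[F](z)$ is uniformly norm-bounded on $\overline{R\mathbb B^d}$; consequently both sides of the claimed identity depend continuously on $(f, g)$ with respect to this mode of convergence, and passing to the limit $N \to \infty$ completes the argument.

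The main obstacle is really only bookkeeping in this last approximation step; there is no genuine analytic difficulty, because the essential content of the proposition is already packaged into the scalar Proposition \ref{Richter-char-R}, and the vector extension is nothing more than polarization in $\mathcal M$ followed by Taylor-polynomial approximation. No input beyond the uniform boundedness of $P(\cdot, \cdot)$ on compact subsets of $\mathbb B^d \times \partial \mathbb B^d$ is needed.
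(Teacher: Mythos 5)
Your proposal is correct and takes essentially the same route as the paper: the paper's proof likewise reduces the vector-valued identity to the scalar Proposition \ref{Richter-char-R} applied to the complex measures $\langle F(\cdot)x, y\rangle$, $x, y \in \mathcal M$, treating the reduction as immediate rather than spelling out your rank-one/sesquilinearity and Taylor-approximation bookkeeping. One minor repair: justify the uniform convergence on $\overline{R\mathbb B^d}$ of the partial sums and their first-order derivatives via the homogeneous expansion (or Cauchy estimates adapted to the ball) rather than via a polydisc contained in $R'\mathbb B^d$, since a polydisc containing $\overline{R\mathbb B^d}$ need not lie inside the unit ball once $R\sqrt{d} \geqslant 1$.
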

\begin{proof}
This follows from the scalar case (see Proposition \ref{Richter-char-R}) applied to the complex measures $\inp{F(\cdot)x}{y},$ $x, y \in \mathcal M.$
\end{proof}

We can now answer Question \ref{Q1.1}(b).

\begin{theorem} \label{R-F-role-1}
Let $F \in M_+(\partial \mathbb B^d, \mathcal B(\mathcal M)).$
Assume that for $k=1, \ldots, d,$ $z_k$ is a multiplier for $\mathscr D(F).$ Then the $d$-tuple $\mathscr M_z$ on $\mathscr D(F)$ is a joint $2$-isometry.
\end{theorem}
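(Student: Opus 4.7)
The plan is to verify $B_2(\mathscr{M}_z)=0$ by a direct computation on $\mathcal{M}$-valued polynomials using Richter's formula at $k=1,$ and then to invoke density of polynomials to extend to all of $\mathscr{D}(F).$

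First, I would derive the sesquilinear identity
$$\sum_{i=1}^{d}\langle \mathscr{M}_{z_i}p,\mathscr{M}_{z_i}q\rangle_{\mathscr{D}(F)}=\langle p,q\rangle_{\mathscr{D}(F)}+\int_{\partial\mathbb{B}^{d}}\langle dF(\zeta)p(\zeta),q(\zeta)\rangle_{\mathcal{M}}$$
for $\mathcal{M}$-valued polynomials $p,q.$ To obtain it, I apply Proposition~\ref{Richter-char-R-vector} with $f=p,\ g=q$ and let $R\to 1^{-}.$ The integrals over $R\mathbb{B}^{d}$ converge to their counterparts over $\mathbb{B}^{d}$ by dominated convergence, while the boundary term is handled by the vector-valued analogue of the weak-$*$ convergence \eqref{weak-star-cgn-1} (obtained by applying the scalar statement to the complex measures $\langle F(\cdot)x,y\rangle$) together with the continuity of $p,q$ on $\overline{\mathbb{B}}^{d}.$ Adding the Hardy identity $\sum_{j=1}^{d}\langle z_jp,z_jq\rangle_{H^2_{\mathcal{M}}}=\langle p,q\rangle_{H^2_{\mathcal{M}}}$ (the Szeg\"o $d$-shift being a joint isometry) and rearranging yields the claim.

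Setting $p=q$ and writing $N(p):=\int_{\partial\mathbb{B}^{d}}\langle dF\,p,p\rangle,$ this gives $\langle Q_{\mathscr{M}_z}(I)p,p\rangle=\|p\|_{\mathscr{D}(F)}^{2}+N(p).$ To compute $Q^{2}_{\mathscr{M}_z}(I),$ I apply the identity again with $p$ replaced by $z_jp$ and sum over $j$:
\begin{equation*}
\langle Q^{2}_{\mathscr{M}_z}(I)p,p\rangle=\sum_{j=1}^{d}\langle Q_{\mathscr{M}_z}(I)z_jp,z_jp\rangle=\sum_{j=1}^{d}\|z_jp\|_{\mathscr{D}(F)}^{2}+\sum_{j=1}^{d}N(z_jp).
\end{equation*}
The boundary value of $z_jp$ on $\partial\mathbb{B}^{d}$ is $\zeta_jp,$ so $N(z_jp)=\int|\zeta_j|^{2}\langle dF\,p,p\rangle.$ Since $\sum_{j=1}^{d}|\zeta_j|^{2}=1$ on $\partial\mathbb{B}^{d},$ the last sum collapses to $N(p).$ Combined with $\sum_{j}\|z_jp\|^{2}_{\mathscr{D}(F)}=\|p\|^{2}_{\mathscr{D}(F)}+N(p),$ this yields $\langle Q^{2}_{\mathscr{M}_z}(I)p,p\rangle=\|p\|^{2}+2N(p),$ and therefore
$$\langle B_2(\mathscr{M}_z)p,p\rangle=\|p\|^{2}-2(\|p\|^{2}+N(p))+(\|p\|^{2}+2N(p))=0$$
for every $\mathcal{M}$-valued polynomial $p.$

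The multiplier hypothesis guarantees that each $\mathscr{M}_{z_i}$ is bounded on $\mathscr{D}(F),$ so $B_2(\mathscr{M}_z)$ is a bounded self-adjoint operator. Polarizing the previous step, $B_2(\mathscr{M}_z)$ annihilates the closed linear span $\mathcal{P}$ of $\mathcal{M}$-valued polynomials. To conclude $B_2(\mathscr{M}_z)=0$ on all of $\mathscr{D}(F),$ one needs $\mathcal{P}=\mathscr{D}(F);$ I would establish this by combining dilations $f_r(z):=f(rz),\ r\uparrow 1,$ with Taylor-polynomial truncation, in the spirit of Richter's argument in \cite[Corollary~3.8(d)]{Ri}. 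This density step---where the Dirichlet norm is strictly stronger than the Hardy norm---is the point I expect to be the main technical obstacle, whereas the algebraic core of the proof is driven entirely by Richter's formula at $k=1$ together with the sphere identity $\sum_{j}|\zeta_j|^{2}=1.$
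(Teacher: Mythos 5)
Your algebraic core is sound and is essentially the same cancellation the paper exploits (the sphere identity $\sum_j|\zeta_j|^2=1$ makes the two boundary contributions in $B_2$ collapse), but the reduction to $\mathcal M$-valued polynomials creates a genuine gap: you need the polynomials to be dense in $\mathscr D(F)$, and that is neither proved in the paper nor obviously true for $d\geqslant 2$. In one variable this is Richter's Corollary~3.8(d), whose proof (contractivity/convergence of dilations $f_r\to f$ in the $\mathscr D(\mu)$-norm) uses one-dimensional structure; no analogue for the Euclidean Poisson kernel on $\mathbb B^d$ is established here, the paper only obtains an orthogonal-basis statement for $\mathbb T^d$-invariant measures (Proposition~\ref{prop-rotation-invariant}), and it explicitly leaves open even the Carleson-measure question for general $\mu$ when $d\geqslant 2$. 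The multiplier hypothesis gives you boundedness of $B_2(\mathscr M_z)$, but boundedness plus vanishing on a possibly non-dense subspace does not yield $B_2(\mathscr M_z)=0$. So as written the proof is incomplete at exactly the step you flagged.

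The paper's proof avoids this entirely: it applies Proposition~\ref{Richter-char-R-vector} directly to an arbitrary $f\in\mathscr D(F)$ and to the functions $z_jf$ (which lie in $\mathscr D(F)$ by the multiplier hypothesis, so all the full-ball Dirichlet integrals are finite), lets $R\to 1^{-}$ using dominated convergence on the ball integrals, and never identifies the limiting boundary term as an integral against $dF$; one only needs that the same limit
$\lim_{R\to 1^{-}}\int_{\partial\mathbb B^d}\inp{P[F](R\zeta)f(R\zeta)}{f(R\zeta)}\,d\sigma(\zeta)$
appears (up to the factor $R^2\sum_j|\zeta_j|^2=R^2\to 1$) in both the $f$-step and the summed $z_jf$-step, so the second difference telescopes to zero. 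If you rework your argument along these lines---i.e., run your two-step computation at the level of the $R$-truncated identity for general $f\in\mathscr D(F)$ rather than for polynomials with $R\to1^{-}$ already taken---the density issue disappears and no weak-$*$ identification of the boundary limit is needed.
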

\begin{proof} Let $f \in \mathscr D(F).$ Thus, $z_j f \in \mathscr D(F),$ $j=1, \ldots, d,$ and hence
by Proposition \ref{Richter-char-R-vector} and the dominated convergence theorem, 
\beq \label{limit-R-Richter}
\sum_{j=1}^d \|z_jf\|^2_{\mathscr D(F)} - \|f\|^2_{\mathscr D(F)} = \lim_{R \rar 1^{-}} \int_{\partial \mathbb B^d} \inp{P[F](R\zeta)f(R\zeta)}{g(R\zeta)}  d\sigma(\zeta). 
\eeq
It follows that 
\beqn
&& \|f\|^2_{\mathscr D(F)} - 2\sum_{j=1}^d \|z_jf\|^2_{\mathscr D(F)} + \sum_{j, k=1}^d \|z_jz_kf\|^2_{\mathscr D(F)}  \\ &=&  \bigg(\|f\|^2_{\mathscr D(F)} -  \sum_{j=1}^d \|z_jf\|^2_{\mathscr D(F)}\bigg) - \sum_{j=1}^d \bigg(\|z_jf\|^2_{\mathscr D(F)} - \sum_{k=1}^d \|z_k(z_jf)\|^2_{\mathscr D(F)}\bigg) \\
&=& \lim_{R \rar 1^{-}} \int_{\partial \mathbb B^d} \inp{P[F](R\zeta)f(R\zeta)}{f(R\zeta)}  d\sigma(\zeta)  \\ &-& \sum_{j=1}^d \lim_{R \rar 1^{-}} \int_{\partial \mathbb B^d} R^2 \inp{P[F](R\zeta)\zeta_j f(R\zeta)}{\zeta_j f(R\zeta)}  d\sigma(\zeta),
\eeqn
which is easily seen to be equal to zero. 
\end{proof}

The question of when the coordinate functions are multipliers for the Dirichlet-type spaces can be related to the notion of Carleson measure (the reader is referred to \cite{VW} for the definition of Carleson measure).
\begin{proposition} \label{prop-multi}
Let $F \in M_+(\partial \mathbb B^d, \mathcal B(\mathcal M)).$
Then the following statements are equivalent$:$
\begin{enumerate}
\item[$(i)$] For $k=1, \ldots, d,$ $z_k$ is a multiplier for $\mathscr D(F).$
\item[$(ii)$] 
There exists a positive constant $C$ such that 
\beqn
\int_{\mathbb B^d} \Big \langle{P[F](z) f(z)},{f(z)} \Big \rangle_{\mathcal M} dV(z) \leqslant C \|f\|^2_{\mathscr D(F)}, \quad f \in \mathscr D(F).
\eeqn
\item[$(iii)$] 
There exists a positive constant $\tilde{C}$ such that 
\beqn
\sup_{0 < R < 1} \int_{\partial \mathbb B^d} \inp{P[F](R\zeta)f(R\zeta)}{f(R\zeta)}  d\sigma(\zeta) \leqslant \tilde{C} \|f\|^2_{\mathscr D(F)}, \quad f \in \mathscr D(F).
\eeqn
\end{enumerate} 
\end{proposition}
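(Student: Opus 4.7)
My plan is to close the cycle $(\mathrm{iii})\Rightarrow(\mathrm{ii})\Rightarrow(\mathrm{i})\Rightarrow(\mathrm{iii})$.

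For $(\mathrm{iii})\Rightarrow(\mathrm{ii})$ I would use the polar decomposition $dV=2d\,R^{2d-1}\,dR\,d\sigma$ on $\mathbb B^d$ to write
$$\int_{\mathbb B^d}\langle P[F](z)f(z),f(z)\rangle\,dV(z)=2d\int_0^1\!R^{2d-1}\!\int_{\partial\mathbb B^d}\!\langle P[F](R\zeta)f(R\zeta),f(R\zeta)\rangle\,d\sigma(\zeta)\,dR,$$
and then dominate the inner integral uniformly by $\tilde C\|f\|^2_{\mathscr D(F)}$ via (iii). Since $2d\int_0^1 R^{2d-1}\,dR=1$, this yields (ii) with $C=\tilde C$.

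For $(\mathrm{ii})\Rightarrow(\mathrm{i})$ I would estimate $\|z_kf\|_{\mathscr D(F)}$ directly from the definition of the Dirichlet norm. Using the product rule $\partial(z_kf)/\partial z_j=\delta_{jk}f+z_k\,\partial f/\partial z_j$ together with the elementary inequality $\|a+b\|^2\leq 2\|a\|^2+2\|b\|^2$ and $|z_k|\leq 1$ on $\mathbb B^d$, I get pointwise
$$\sum_{j=1}^{d}\Big\langle P[F](z)\tfrac{\partial(z_kf)}{\partial z_j},\tfrac{\partial(z_kf)}{\partial z_j}\Big\rangle\leq 2\langle P[F](z)f(z),f(z)\rangle+2\sum_{j=1}^{d}\Big\langle P[F](z)\tfrac{\partial f}{\partial z_j},\tfrac{\partial f}{\partial z_j}\Big\rangle.$$
Integrating over $\mathbb B^d$, bounding the first resulting term by $2C\|f\|^2_{\mathscr D(F)}$ via (ii), recognizing the second as $2d(\|f\|^2_{\mathscr D(F)}-\|f\|^2_{H^2_{\mathcal M}(\mathbb B^d)})$, dividing by $d$, and adding $\|z_kf\|^2_{H^2_{\mathcal M}(\mathbb B^d)}\leq\|f\|^2_{H^2_{\mathcal M}(\mathbb B^d)}$ (since $z_k$ is contractive on $H^2_{\mathcal M}(\mathbb B^d)$) gives $\|z_kf\|^2_{\mathscr D(F)}\leq(3+2C/d)\|f\|^2_{\mathscr D(F)}$, so $z_kf\in\mathscr D(F)$ and $z_k$ is a multiplier.

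The crux is $(\mathrm{i})\Rightarrow(\mathrm{iii})$, where I would apply Proposition \ref{Richter-char-R-vector} with $g=f$. Dropping the nonnegative quantity $R^2\sum_j\int_{R\mathbb B^d}\langle P[F]\partial f/\partial z_j,\partial f/\partial z_j\rangle\,dV$, which is subtracted on the left of that identity, and enlarging the remaining domain of integration to $\mathbb B^d$ gives
$$dR^{2d}\!\int_{\partial\mathbb B^d}\langle P[F](R\zeta)f(R\zeta),f(R\zeta)\rangle\,d\sigma(\zeta)\leq\sum_{i,j=1}^{d}\int_{\mathbb B^d}\Big\langle P[F]\tfrac{\partial(z_if)}{\partial z_j},\tfrac{\partial(z_if)}{\partial z_j}\Big\rangle\,dV,$$
whose right-hand side equals $d\sum_{i=1}^{d}\big(\|z_if\|^2_{\mathscr D(F)}-\|z_if\|^2_{H^2_{\mathcal M}(\mathbb B^d)}\big)\leq d\big(\sum_iM_i^2\big)\|f\|^2_{\mathscr D(F)}$, where $M_i$ is the multiplier norm of $z_i$ guaranteed by (i) together with the closed graph theorem (applicable since $\mathscr D(F)$ is a reproducing kernel Hilbert space by Remark \ref{Hardy-inclusion}(ii)). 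This controls $R^{2d}$ times the sphere integral by $\sum_iM_i^2\|f\|^2_{\mathscr D(F)}$, giving (iii) immediately for $R\in[\tfrac12,1)$. The main obstacle is that this bound degenerates as $R\to 0$; to handle $R\in[0,\tfrac12]$, I would use that $P(R\zeta,\eta)\leq(1+R)/(1-R)^{2d-1}$ is uniformly bounded there, so $\|P[F](R\zeta)\|_{\mathcal B(\mathcal M)}$ is uniformly bounded, and exploit continuity of point evaluations on $\mathscr D(F)$ to bound $\|f(R\zeta)\|_{\mathcal M}$ uniformly by a constant times $\|f\|_{\mathscr D(F)}$, yielding the required bound on the sphere integral for small $R$ and completing (iii).
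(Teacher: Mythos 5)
Your argument is correct, but it is organized differently from the paper's. The paper proves (ii) $\Leftrightarrow$ (i) by a qualitative seminorm argument (showing $z_kf\in\mathscr D(F)$ for all $k$ if and only if $\|f\|_{\star}<\infty$, with boundedness recovered afterwards via the closed graph theorem), deduces (i) $\Rightarrow$ (iii) from the limit formula \eqref{limit-R-Richter}, and closes the loop with (iii) $\Rightarrow$ (i) using Proposition \ref{Richter-char-R-vector} together with the monotone convergence theorem. You instead run the cycle (iii) $\Rightarrow$ (ii) $\Rightarrow$ (i) $\Rightarrow$ (iii): your (iii) $\Rightarrow$ (ii) via polar coordinates $dV=2dR^{2d-1}dR\,d\sigma$ is not in the paper and is arguably cleaner, since it gives (ii) with the explicit constant $C=\tilde C$ and avoids any limiting argument; your (ii) $\Rightarrow$ (i) is a direct quantitative estimate (using positivity of $P[F](z)$ to justify the pointwise inequality $\langle P[F](a+b),a+b\rangle\leqslant 2\langle P[F]a,a\rangle+2\langle P[F]b,b\rangle$ and contractivity of $z_k$ on $H^2_{\mathcal M}(\mathbb B^d)$), which yields an explicit multiplier bound where the paper's route is softer; and your (i) $\Rightarrow$ (iii) uses Proposition \ref{Richter-char-R-vector} as the paper implicitly does, but you make explicit the degeneracy of the factor $R^{2d}$ as $R\to 0$, a point the paper's ``immediate'' passes over. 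One small detail to tighten in that last step: for $R\leqslant\frac12$ you need the bound $\|f(R\zeta)\|_{\mathcal M}\leqslant c\,\|f\|_{\mathscr D(F)}$ to be uniform in $\zeta$ and $R$, which does not follow from boundedness of each individual point evaluation alone; the quickest fix is to use $\|f\|_{H^2_{\mathcal M}(\mathbb B^d)}\leqslant\|f\|_{\mathscr D(F)}$ together with the uniform bound $\|f(w)\|_{\mathcal M}\leqslant(1-\|w\|^2)^{-d/2}\|f\|_{H^2_{\mathcal M}(\mathbb B^d)}$ on $\|w\|\leqslant\frac12$. With that supplied, your proof is complete, and in fact somewhat more explicit than the paper's.
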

\begin{proof}
Let $\mbox{Hol}(\mathbb B^d, \mathcal M)$ denote the vector space of $\mathcal M$-valued holomorphic functions on $\mathbb B^d.$ For $f \in \mbox{Hol}(\mathbb B^d, \mathcal M),$ define $$\|f\|^2_{\star} = \int_{\mathbb B^d} \big \langle{P[F](z) f(z)},{f(z)} \big \rangle_{\mathcal M} dV(z),$$ and note that $\|\cdot\|_{\star}$ defines (possibly an extended real-valued) semi-norm on $\mbox{Hol}(\mathbb B^d, \mathcal M).$  In particular, for $g, h \in \mbox{Hol}(\mathbb B^d, \mathcal M),$
\beq \label{seminorm}
\mbox{if $\|g\|_{\star} < \infty,$ then $\|g+h\|_{\star} < \infty$ if and  only  if $\|h\|_{\star} < \infty.$}
\eeq 
Note that $f \in \mathscr D(F)$ if and only if $f \in H^2_{\mathcal M}(\mathbb B^d)$ and $\|\frac{\partial f}{\partial z_j} \|_{\star} < \infty$ for every $j=1, \ldots, d.$ This yields in particular that for any $f \in \mathscr D(F),$ \beq \label{multi-L2} \Big\|z_k\frac{\partial f}{\partial z_j} \Big\|_{\star} < \infty, \quad j, k=1, \ldots, d. \eeq  Further,
for any $k=1, \ldots, d,$ we have
\beqn 
&& \int_{\mathbb B^d} \sum_{j=1}^d \Big \langle{P[F](z) \frac{\partial (z_kf)}{\partial z_j}},{\frac{\partial (z_kf)}{\partial z_j}} \Big \rangle_{\mathcal M} dV(z) \\ &=& \sum_{\underset{j \neq k}{j=1}}^d \Big\|z_k\frac{\partial f}{\partial z_j}\Big\|^2_{\star} + \Big\|z_k\frac{\partial f}{\partial z_k} + f \Big\|^2_{\star}. 
\eeqn
It now follows from \eqref{seminorm} and \eqref{multi-L2} that for any $f \in \mathscr D(F),$
\beq \label{zk-F-norm}
\mbox{$z_kf \in \mathscr D(F)$ for every $k=1, \ldots, d$ if and only if $\|f\|_{\star} < \infty.$}
\eeq
This yields (ii) $\Rightarrow$ (i).

To see (i) $\Rightarrow$ (ii), note first that by Remark \ref{Hardy-inclusion}(i), 
$\mathscr D(F) \subseteq H^2_{\mathcal M}(\mathbb B^d),$ and hence
we may consider the normed linear space $\mathscr H=\mathscr D(F)$ endowed with the norm $\max\{\|\cdot\|_{\star}, \|\cdot\|_{H^2_{\mathcal M}(\mathbb B^d)}\}.$ In view of \eqref{zk-F-norm}, the map $f \mapsto f$ from $\mathscr D(F)$ into the completion of $\mathscr H$ is well-defined, and hence 
one may apply the closed graph theorem to obtain (ii). 

The implication (i) $\Rightarrow$ (iii) is immediate from \eqref{limit-R-Richter}. The other implication follows from Proposition \ref{Richter-char-R-vector} and the monotone convergence theorem.
\end{proof}
\begin{remark}
In case of $d=1,$ (i) above always holds (see \cite[Theorem 3.6]{Ri} for $\dim \mathcal M =1$ and \cite[Theorem 3.1]{O} for the general case). In case $\mathcal M=\mathbb C,$ the condition (ii) above says that the Dirichlet space $\mathscr D(F)$ is boundedly embedded into the weighted Bergman space with weight being $P[F].$  In turn,  this is equivalent to the assertion that the weighted volume measure with weight being $P[F]$ is a Carleson measure for $\mathscr D(F)$ (see 
\cite[Theorem 2]{VW} for a characterization of Carleson measures for Besov-Sobolev spaces on $\mathbb B^d$).
\end{remark}

The following provides a partial answer to Question \ref{Q1.1}(a).
\begin{corollary} \label{Coro-multi}
For a bounded measurable function $\mathsf w : \partial \mathbb B^d \rar [0, \infty),$ consider the weighted surface area measure $\mu_{\mathsf w}$ with weight function $\mathsf w.$  Then the $d$-tuple $\mathscr M_z$ on $\mathscr D(\mu_{\mathsf w})$ is a joint $2$-isometry.
\end{corollary}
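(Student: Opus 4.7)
The plan is to apply Theorem \ref{R-F-role-1}, which reduces the corollary to proving that each coordinate function $z_k$ is a multiplier of $\mathscr D(\mu_{\mathsf w})$. To verify this, I will check condition (ii) of Proposition \ref{prop-multi}, namely the existence of a constant $C > 0$ with
$$\int_{\mathbb B^d} P[\mu_{\mathsf w}](z) |f(z)|^2 dV(z) \leqslant C \|f\|^2_{\mathscr D(\mu_{\mathsf w})}, \quad f \in \mathscr D(\mu_{\mathsf w}).$$

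The key observation is that the boundedness of $\mathsf w$ forces the Poisson integral $P[\mu_{\mathsf w}]$ to be uniformly bounded on $\mathbb B^d$. Indeed, for every $z \in \mathbb B^d$,
$$P[\mu_{\mathsf w}](z) = \int_{\partial \mathbb B^d} P(z,\zeta) \mathsf w(\zeta) d\sigma(\zeta) \leqslant \|\mathsf w\|_\infty \int_{\partial \mathbb B^d} P(z,\zeta) d\sigma(\zeta) = \|\mathsf w\|_\infty,$$
by \eqref{P-nor-2}. Hence, for every $f \in \mathscr D(\mu_{\mathsf w})$,
$$\int_{\mathbb B^d} P[\mu_{\mathsf w}](z) |f(z)|^2 dV(z) \leqslant \|\mathsf w\|_\infty \int_{\mathbb B^d} |f(z)|^2 dV(z).$$

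It remains to dominate the right-hand side by a multiple of $\|f\|^2_{\mathscr D(\mu_{\mathsf w})}$. This follows from the chain of contractive inclusions $\mathscr D(\mu_{\mathsf w}) \hookrightarrow H^2(\mathbb B^d) \hookrightarrow L^2(V)$: the first inclusion is immediate from the definition of the Dirichlet-type norm (the gradient term is non-negative), while the second, that the Hardy norm dominates the Bergman norm on $\mathbb B^d$, follows after a passage to polar coordinates from the monotonicity of $r \mapsto \int_{\partial \mathbb B^d} |f(r\zeta)|^2 d\sigma(\zeta)$ for holomorphic $f$ together with the identity $2d\int_0^1 r^{2d-1} dr = 1$. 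Thus condition (ii) of Proposition \ref{prop-multi} holds with $C = \|\mathsf w\|_\infty$, so each $z_k$ is a multiplier, and Theorem \ref{R-F-role-1} then delivers the joint $2$-isometry conclusion.

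I do not anticipate a genuine obstacle here: the corollary is essentially a contractive-embedding computation that packages the boundedness of $\mathsf w$ together with well-known inclusions between the Dirichlet-type, Hardy, and Bergman spaces on the unit ball. The nontrivial content was already absorbed in Theorem \ref{R-F-role-1}, where Richter's formula in several variables powered the $2$-isometry argument.
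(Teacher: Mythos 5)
Your proposal is correct and follows essentially the same route as the paper: reduce via Theorem \ref{R-F-role-1} to condition (ii) of Proposition \ref{prop-multi}, bound $P[\mu_{\mathsf w}]$ by $\sup \mathsf w$ using \eqref{P-nor-2}, and then control $\int_{\mathbb B^d}|f|^2dV$ by $\|f\|^2_{\mathscr D(\mu_{\mathsf w})}$ through the contractive inclusions of $\mathscr D(\mu_{\mathsf w})$ into the Hardy space and of the Hardy space into the Bergman space. The only cosmetic difference is that you verify the Hardy-to-Bergman contraction directly by polar coordinates and monotonicity of integral means, whereas the paper cites the contractive embedding $\mathscr H_d \subseteq \mathscr H_{d+1}$ from Example \ref{exam-classical}.
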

\begin{proof}
In view of Proposition \ref{prop-multi} and Theorem \ref{R-F-role-1}, it suffices to check that there exists a positive constant $C_{\mathsf w}$ such that
\beqn
\int_{\mathbb B^d} |f(z)|^2 P[\mu_{\mathsf w}](z) dV(z) \leqslant C_{\mathsf w} \|f\|^2_{\mathscr D(\mu_{\mathsf w})}, \quad f \in \mathscr D(\mu_{\mathsf w}).
\eeqn
Since 
$\mathscr D(\mu_{\mathsf w}) \subseteq H^2(\mathbb B^d)= \mathscr H_d$ and  $\mathscr H_d$ is contractively embedded into $\mathscr H_{d+1}$ (see Example \ref{exam-classical} and Remark \ref{Hardy-inclusion}(i)) and $P[\mu_{\mathsf w}]$ is bounded above by $\sup \mathsf w,$ we obtain the desired inequality with $C_{\mathsf w}=\sup \mathsf w.$
\end{proof}

We do not know whether the weighted volume measure with weight being the Poisson integral of an arbitrary finite positive Borel measure $\mu$ on $\partial \mathbb B^d$, $d \geqslant 2$ is a Carleson measure for $\mathscr D(\mu).$ 

\section{Rotation invariant and pluriharmonic measures}

In this section, we study Dirichlet-type spaces arising from two families of positive finite Borel measures on the unit sphere. Before we discuss the case of rotation invariant measures, we recall the notion of $\mathbb T^d$-invariance. 

A subset $\Omega$ of $\mathbb C^d$ is {\it $\mathbb T^d$-invariant} if for every $\theta=(\theta_1, \ldots, \theta_d) \in \mathbb R^d,$ \beqn e^{i \theta} \cdot \Omega:=\{(e^{i \theta_1}z_1, \ldots, e^{i \theta_d}z_d) : (z_1, \ldots, z_d) \in \Omega\} \subseteq \Omega. \eeqn
A positive finite Borel measure $\mu$ on a $\mathbb T^d$-invariant subset $\Omega$ of $\mathbb C^d$ is said to be {\it $\mathbb T^d$-invariant} if  for every Borel subset $A$ of $\Omega,$
\beqn
\mu(e^{i \theta} \cdot A) = \mu(A), \quad \theta=(\theta_1, \ldots, \theta_d) \in \mathbb R^d.
\eeqn

\begin{lemma} \label{lem-ortho} Let $\mu \in M_+(\partial \mathbb B^d).$
Then $\mu$ is $\mathbb T^d$-invariant if and only if the monomials are orthogonal in $\mathscr D(\mu).$ 
\end{lemma}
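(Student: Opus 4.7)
The plan is to reduce the equivalence to a single scalar quantity
\[
I(\alpha,\beta) \;=\; \int_{\mathbb B^d} \langle \nabla z^{\alpha}, \nabla z^{\beta}\rangle \, P[\mu](z)\, dV(z), \qquad \alpha,\beta\in\mathbb Z_+^d,
\]
since the monomials are already orthogonal in $H^2(\mathbb B^d)$. Hence $\{z^\alpha\}$ is orthogonal in $\mathscr D(\mu)$ if and only if $I(\alpha,\beta)=0$ for every $\alpha\neq\beta$. For the forward direction, I would first verify that $\mathbb T^d$-invariance of $\mu$ passes to the Poisson integral: using $\|U_\theta z-\zeta\|=\|z-U_{-\theta}\zeta\|$ and $\|U_\theta z\|=\|z\|$ (where $U_\theta(z_1,\ldots,z_d)=(e^{i\theta_1}z_1,\ldots,e^{i\theta_d}z_d)$) one obtains $P(U_\theta z,\zeta)=P(z,U_{-\theta}\zeta)$, and the invariance of $\mu$ together with a change of variables on $\partial \mathbb B^d$ yields $P[\mu]\circ U_\theta = P[\mu]$.

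Next, I would exploit $\nabla z^\gamma = (\gamma_1 z^{\gamma-\varepsilon_1},\ldots,\gamma_d z^{\gamma-\varepsilon_d})$ and make the change of variables $z=U_\theta w$ in the definition of $I(\alpha,\beta)$. Since the volume measure is $U_\theta$-invariant, $P[\mu]$ is $U_\theta$-invariant, and the integrand transforms as
\[
\sum_{j=1}^d \alpha_j\beta_j\, z^{\alpha-\varepsilon_j}\overline{z}^{\,\beta-\varepsilon_j}\Big|_{z=U_\theta w} = e^{i\theta\cdot(\alpha-\beta)} \sum_{j=1}^d \alpha_j\beta_j\, w^{\alpha-\varepsilon_j}\overline{w}^{\,\beta-\varepsilon_j},
\]
one gets $I(\alpha,\beta)=e^{i\theta\cdot(\alpha-\beta)}I(\alpha,\beta)$ for all $\theta\in\mathbb R^d$, so $I(\alpha,\beta)=0$ whenever $\alpha\neq\beta$.

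For the converse, I would invoke the $k=1$ case of Richter's formula (Lemma \ref{Richter-char}) with $p=z^\alpha,\ q=z^\beta$:
\[
\sum_{j=1}^d I(\alpha+\varepsilon_j,\beta+\varepsilon_j) \;=\; I(\alpha,\beta) + d\int_{\partial \mathbb B^d} \zeta^\alpha\overline{\zeta}^\beta\, d\mu(\zeta).
\]
If monomials are orthogonal in $\mathscr D(\mu)$, then $I(\gamma,\delta)=0$ for every $\gamma\neq\delta$, and applying the identity above to any $\alpha\neq\beta$ (noting $\alpha+\varepsilon_j\neq\beta+\varepsilon_j$) forces $\widehat{\mu}(\alpha,\beta):=\int_{\partial \mathbb B^d} \zeta^\alpha\overline{\zeta}^\beta d\mu = 0$. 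To conclude $\mathbb T^d$-invariance, I would use the Stone–Weierstrass theorem: the algebra generated by $\{\zeta^\alpha\overline{\zeta}^\beta\}$ is dense in $C(\partial\mathbb B^d)$, and for any such generator one has $\int (U_\theta\zeta)^\alpha\overline{(U_\theta\zeta)}^\beta d\mu = e^{i\theta\cdot(\alpha-\beta)}\widehat{\mu}(\alpha,\beta)$, which agrees with $\widehat{\mu}(\alpha,\beta)$ trivially when $\alpha=\beta$ and because both sides vanish when $\alpha\neq\beta$. Density upgrades this matching to all continuous functions, giving $\mu=\mu\circ U_\theta^{-1}$.

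Neither direction presents a real obstacle; the argument is a symmetry calculation on one side and a direct application of Richter's formula plus Stone–Weierstrass on the other. The only point requiring care is the bookkeeping of the $U_\theta$-action on $\nabla z^\gamma$, which, as shown above, produces exactly the single phase factor $e^{i\theta\cdot(\alpha-\beta)}$ that kills $I(\alpha,\beta)$ for $\alpha\neq\beta$.
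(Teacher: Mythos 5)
Your proof is correct and follows essentially the same route as the paper: the forward direction is the paper's ``routine verification'' that $\mathbb T^d$-invariance of $\mu$ passes to $P[\mu]\,dV$ (your phase-factor computation makes this explicit), and the converse uses Richter's formula to convert orthogonality in $\mathscr D(\mu)$ into orthogonality of the monomials in $L^2(\mu)$. The only difference is the last step of the converse, where the paper cites \cite[Lemma 2.3]{CK} to pass from $\int_{\partial\mathbb B^d}\zeta^{\alpha}\overline{\zeta}^{\beta}\,d\mu=0$ for $\alpha\neq\beta$ to $\mathbb T^d$-invariance, while you prove that step directly via Stone--Weierstrass and the Riesz representation theorem, which is a perfectly valid self-contained substitute.
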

\begin{proof} 
By Theorem \ref{Richter-several}, the orthogonality of monomials in $\mathscr D(\mu)$ implies the orthogonality of monomials in the space $L^2(\mu)$ of square-integrable functions on $\partial \mathbb B^d,$    and hence by \cite[Lemma 2.3]{CK}, $\mu$ is $\mathbb T^d$-invariant.  
Conversely, if $\mu$ is $\mathbb T^d$-invariant, then
a routine verification shows that $P[\mu]dV$ is $\mathbb T^d$-invariant, and hence the monomials are orthogonal in $\mathscr D(\mu).$
\end{proof}

To show that Dirichlet-type spaces associated with rotation-invariant measures support joint $2$-isometries, we need another lemma.
\begin{lemma}
Let $\mu \in M_+(\partial \mathbb B^d)$ and let $\mathscr D(\mu)$ be the associated Dirichlet-type space. For $f \in \mathscr D(\mu),$ set 
\beqn
\|f\|^2_{\circ} := |f(0)|^2 + \frac{1}{d} \int_{\mathbb B^d} \|\nabla f (z)\|^2 P[\mu](z)   dV(z).  
\eeqn
Then we have
\beq
\label{mono-estimate}
\sum_{k=1}^d \|z^{\alpha+ \varepsilon_k}\|^2_{\circ} \leqslant \max \Big\{2(1+d), 
\frac{\mu(\partial \mathbb B^d)}{d} \Big\} \, \|z^{\alpha}\|^2_{\circ}, \quad \alpha \in \mathbb Z^d_+.
\eeq
\end{lemma}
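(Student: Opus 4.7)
The plan is to handle the cases $\alpha=0$ and $\alpha\neq 0$ separately: the former is a one-line computation, the latter reduces to a pointwise algebraic identity extracted from Lemma \ref{Rajeev}.

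For $\alpha=0$ one has $\|z^0\|_\circ^2 = 1$, and since $\|\nabla z_k\|^2 \equiv 1$, equation \eqref{P-nor} together with Fubini gives
$$\sum_{k=1}^d \|z^{\varepsilon_k}\|_\circ^2 = \frac{1}{d}\sum_{k=1}^d \int_{\mathbb B^d} P[\mu](z)\,dV(z) = \mu(\partial\mathbb B^d),$$
absorbed by the $\mu$-dependent entry of the max in \eqref{mono-estimate}.

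For $\alpha\neq 0$ the central step is the pointwise identity
$$\sum_{k=1}^d \|\nabla z^{\alpha+\varepsilon_k}\|^2 = (2|\alpha|+d)\,|z^\alpha|^2 + \|z\|^2\,\|\nabla z^\alpha\|^2, \qquad z\in\mathbb B^d.$$
This follows from Lemma \ref{Rajeev} with $C=0$ and $f=g=z^\alpha$, upon using the formula $\Delta|z^\beta|^2 = \sum_j \beta_j^2\,|z^{\beta-\varepsilon_j}|^2 = \|\nabla z^\beta\|^2$, the expansion $\|z\|^2|z^\alpha|^2 = \sum_l |z^{\alpha+\varepsilon_l}|^2$, and the simplification $\sum_j(\alpha_j+1)^2 - \sum_j \alpha_j^2 = 2|\alpha|+d$. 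Integrating this identity against $P[\mu](z)\,dV(z)$ and bounding $\|z\|^2\leq 1$ on $\mathbb B^d$ gives
$$\sum_{k=1}^d \|z^{\alpha+\varepsilon_k}\|_\circ^2 \leq \frac{2|\alpha|+d}{d}\int_{\mathbb B^d} |z^\alpha|^2\,P[\mu](z)\,dV(z) + \|z^\alpha\|_\circ^2.$$
To close, I would invoke the elementary pointwise estimate $|z^\alpha|^2 \leq \|\nabla z^\alpha\|^2/|\alpha|$ on $\overline{\mathbb B^d}$, valid for $\alpha\neq 0$ because $\alpha_j^2\geq \alpha_j$ for $\alpha_j\in\mathbb Z_+$ and $|z^{\alpha-\varepsilon_j}|^2\geq |z^\alpha|^2$ (as $|z_j|\leq 1$), so that $\|\nabla z^\alpha\|^2 \geq |\alpha|\,|z^\alpha|^2$. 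Substituting yields
$$\sum_{k=1}^d \|z^{\alpha+\varepsilon_k}\|_\circ^2 \leq \Bigl(3+\tfrac{d}{|\alpha|}\Bigr)\|z^\alpha\|_\circ^2 \leq (d+3)\,\|z^\alpha\|_\circ^2 \leq 2(1+d)\,\|z^\alpha\|_\circ^2,$$
which is absorbed by the first entry of the max.

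The main obstacle is really just bookkeeping: one must expand $\Delta(\|z\|^2|z^\alpha|^2)$ carefully enough that the cross terms telescope into the claimed right-hand side of the pointwise identity. An alternative route uses Theorem \ref{Richter-several} directly with $k=1$ and $p=q=z^\alpha$, which produces the clean formula $\sum_k \|z^{\alpha+\varepsilon_k}\|_\circ^2 = \|z^\alpha\|_\circ^2 + \int_{\partial\mathbb B^d}|\zeta^\alpha|^2\,d\mu$ for $\alpha\neq 0$; however, comparing the resulting boundary integral with $\|z^\alpha\|_\circ^2$ is less transparent, so the algebraic-identity approach above seems preferable.
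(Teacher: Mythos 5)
Your treatment of the case $\alpha \neq 0$ is correct and runs essentially parallel to the paper's argument: both expand $\sum_{k=1}^d\|\nabla z^{\alpha+\varepsilon_k}\|^2$, bound $\|z\|^2$ by $1$, and dominate $|z^\alpha|^2$ pointwise by the squared gradient. The paper reaches the constant $2(1+d)$ via the crude bound $(\alpha_j+\delta_{jk})^2\leqslant 2(\alpha_j^2+\delta_{jk})$, whereas you keep the exact identity $\sum_{k}\|\nabla z^{\alpha+\varepsilon_k}\|^2=(2|\alpha|+d)|z^\alpha|^2+\|z\|^2\|\nabla z^\alpha\|^2$ (which does check out by direct expansion; the appeal to Lemma \ref{Rajeev} is decorative rather than essential) together with the sharper pointwise estimate $|z^\alpha|^2\leqslant \|\nabla z^\alpha\|^2/|\alpha|$, yielding $3+d/|\alpha|\leqslant 2(1+d)$. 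So for $\alpha\neq 0$ your bookkeeping is fine and even slightly sharper than the paper's.

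The gap is in the case $\alpha=0$. Your computation $\sum_{k=1}^d\|z_k\|_\circ^2=\int_{\mathbb B^d}P[\mu]\,dV=\mu(\partial\mathbb B^d)$ is correct (each summand equals $\frac1d\int_{\mathbb B^d}P[\mu]\,dV$ and there are $d$ of them), but this quantity is not ``absorbed by the $\mu$-dependent entry of the max'': since $\|1\|_\circ^2=1$, that would require $\mu(\partial\mathbb B^d)\leqslant \mu(\partial\mathbb B^d)/d$, which fails for every $d\geqslant 2$ and $\mu\neq 0$. In fact your own computation shows that the second entry of the max in \eqref{mono-estimate} ought to be $\mu(\partial\mathbb B^d)$ rather than $\mu(\partial\mathbb B^d)/d$; the paper's display \eqref{multi} makes the same slip by letting the prefactor $\frac1d$ survive the summation over $k$. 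The discrepancy is harmless for the intended application (Proposition \ref{prop-rotation-invariant} only needs a finite constant independent of $\alpha$), but as written your final absorption step is false, and the fix is simply to conclude with the constant $\max\{2(1+d),\,\mu(\partial\mathbb B^d)\}$ instead.
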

\begin{proof}
By the mean value property for harmonic functions (see, for instance, \cite[(3.1.13)]{Si}), we have
\beq 
\label{multi}
 \sum_{k=1}^d \|z_k\|^2_{\circ} =  \frac{1}{d} \int_{\mathbb B^d} P[\mu](z)  dV(z) = \frac{P[\mu][0]}{d}   =  \frac{\mu(\partial \mathbb B^d)}{d}.
\eeq
Let $\alpha \in \mathbb Z^d_+ \setminus \{0\}$ and 
note that
\beq \label{esti-mono}
 |z^{\alpha}|^2 \leqslant   \sum_{j=1}^d \alpha^2_j  |z^{\alpha-\varepsilon_j}|^2, \quad z \in \mathbb B^d.
\eeq
Let $\delta_{jk}$ denote the Kronecker delta function, and note that
\beqn
 \sum_{k=1}^d \|z^{\alpha+ \varepsilon_k}\|^2_{\circ} &=&  \frac{1}{d}\sum_{k=1}^d  \int_{\mathbb B^d}  \sum_{j=1}^d (\alpha_j +\delta_{jk})^2 |z^{\alpha -\varepsilon_j + \varepsilon_k}|^2   P[\mu](z)  dV(z)  \\ & \leqslant & \frac{2}{d} \sum_{k=1}^d  \int_{\mathbb B^d}   \sum_{j=1}^d (\alpha^2_j +\delta_{jk}) |z^{\alpha -\varepsilon_j + \varepsilon_k}|^2 P[\mu](z) dV(z) \\ &= &
 \frac{2}{d}   \int_{\mathbb B^d}   \|z\|^2 \sum_{j=1}^d \alpha^2_j |z^{\alpha -\varepsilon_j}|^2  P[\mu](z) dV(z)  \\ &+&  2   \int_{\mathbb B^d}     |z^{\alpha}|^2   P[\mu](z) dV(z) \\
& \leqslant  & 2(1+ d)\|z^{\alpha}\|^2_{\circ} , 
\eeqn
where we used the estimate \eqref{esti-mono} in the last step.
Combining this with \eqref{multi}, we obtain \eqref{mono-estimate}.
\end{proof}

The following lists several properties of Dirichlet-type spaces associated with $\mathbb T^d$-invariant measures.
\begin{proposition} \label{prop-rotation-invariant}
Let $\mu \in M_+(\partial \mathbb B^d)$ and let $\mathscr D(\mu)$ be the associated Dirichlet-type space. 
If
 $\mu$ is $\mathbb T^d$-invariant,
then the following statements are true$:$
\begin{enumerate}
\item[$(i)$] the monomials $z^\alpha,$ $\alpha \in \mathbb Z^d_+$ form an orthogonal basis for $\mathscr D(\mu),$ 
\item[$(ii)$] for every $j=1, \ldots, d,$ $z_j$ is a multiplier for $\mathscr D(\mu),$ 
\item[$(iii)$] the $d$-tuple $\mathscr M_z$ on $\mathscr D(\mu)$ is a joint $2$-isometry,
\item[$(iv)$] if $\inp{f \circ U}{g \circ U}_{\mathscr D(\mu)}=\inp{f}{g}_{\mathscr D(\mu)}$ for every unitary $d \times d$ matrix $U$ $($considered as a function from $\mathbb B^d$ onto $\mathbb B^d)$, then $\mu,$ up to a scalar multiple, is the surface area measure $\sigma.$  
\end{enumerate}
\end{proposition}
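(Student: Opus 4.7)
I will address the four parts in order, with parts (ii) and (iii) following routinely from (i) and Theorem \ref{R-F-role-1}, so that the creative content concentrates in (i) and (iv).

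For \emph{part (i)}, the orthogonality of distinct monomials is immediate from Lemma \ref{lem-ortho}. For the basis property, I would exploit the diagonal $\mathbb T$-action $R_\theta f(z) = f(e^{i\theta}z)$ on $\mathscr D(\mu)$. The $\mathbb T^d$-invariance of $\mu$ yields, via a change of variables in the definition of $P[\mu]$, the identity $P[\mu] \circ R_\theta = P[\mu]$; combined with the $R_\theta$-invariance of $dV$ and a direct computation of $\nabla(f \circ R_\theta)$, this promotes $R_\theta$ to a strongly continuous one-parameter unitary group on $\mathscr D(\mu)$ (the strong continuity being the technical core of (i), and reducing to the standard continuity of translation in the finite weighted measure $P[\mu]\,dV$, whose total mass equals $\mu(\partial \mathbb B^d)$ by \eqref{P-nor} and Fubini). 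The isotypic decomposition then partitions $\mathscr D(\mu)$ into closed subspaces $\mathscr D_n(\mu)$ of $n$-homogeneous elements; since each holomorphic $n$-homogeneous function on $\mathbb B^d$ is a homogeneous polynomial of degree $n$, $\mathscr D_n(\mu)$ is a finite-dimensional subspace of $\mathrm{span}\{z^\alpha : |\alpha| = n\}$ on which the monomials form an orthogonal basis by Lemma \ref{lem-ortho}. For \emph{part (ii)}, I would combine (i) with \eqref{mono-estimate} and the elementary identity $\sum_{k=1}^d \|z^{\alpha + \varepsilon_k}\|^2_{H^2(\mathbb B^d)} = \|z^\alpha\|^2_{H^2(\mathbb B^d)}$ to derive a uniform bound $\|z^{\alpha + \varepsilon_j}\|^2_{\mathscr D(\mu)} \leqslant C' \|z^\alpha\|^2_{\mathscr D(\mu)}$; orthogonality of the monomials then shows $\mathscr M_{z_j}$ is bounded on polynomials and extends by density to all of $\mathscr D(\mu)$. \emph{Part (iii)} is immediate from Theorem \ref{R-F-role-1}.

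For \emph{part (iv)}, my strategy is to reduce the hypothesis to a moment-annihilation identity using Theorem \ref{Richter-several}. The chain rule $\nabla(p \circ U)(z) = U^T \nabla p(Uz)$ together with $\inp{U^T v}{U^T w} = \inp{v}{w}$ for unitary $U$ and the change of variable $w = Uz$ reduce the hypothesis (whose Hardy-norm contribution is automatically $U$-invariant) to
\[
\int_{\mathbb B^d} \inp{\nabla p(z)}{\nabla q(z)} \bigl(P[U_*\mu](z) - P[\mu](z)\bigr)\, dV(z) = 0
\]
for all polynomials $p, q$ and every unitary $U$, where $U_*\mu$ denotes the pushforward of $\mu$ under $U|_{\partial \mathbb B^d}$ (so that $P[U_*\mu] = P[\mu] \circ U^{-1}$). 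Applying Theorem \ref{Richter-several} to both $\mu$ and $U_*\mu$ with $p = z^\alpha$ and $q = z^\beta$ and subtracting, every Dirichlet integral on each side vanishes by the displayed identity, leaving
\[
kd \int_{\partial \mathbb B^d} \zeta^\alpha \overline{\zeta}^{\beta}\, d(U_*\mu - \mu)(\zeta) = 0, \quad \alpha, \beta \in \mathbb Z^d_+,\ k \geqslant 1.
\]
Since $\{\zeta^\alpha \overline{\zeta}^\beta\}$ is dense in $C(\partial \mathbb B^d)$ by Stone--Weierstrass, $U_*\mu = \mu$ for every unitary $U$; the uniqueness of the rotation-invariant probability measure on $\partial \mathbb B^d$ then forces $\mu = \mu(\partial \mathbb B^d)\,\sigma$. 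The elegance here is that Theorem \ref{Richter-several} converts the analytic constraint on integrals of $\inp{\nabla p}{\nabla q}$ into the purely algebraic statement that all $(\zeta,\overline{\zeta})$-moments of $U_*\mu - \mu$ vanish, so part (iv) becomes the cleanest step once the machinery behind (i) is in place.
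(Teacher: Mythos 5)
Your proposal is correct, and for parts (ii)--(iv) it runs along essentially the paper's lines: the uniform ratio bound from \eqref{mono-estimate} (together with the Szeg\H{o}-shift identity $\sum_{k}\|z^{\alpha+\varepsilon_k}\|^2_{H^2(\mathbb B^d)}=\|z^{\alpha}\|^2_{H^2(\mathbb B^d)}$) plus orthogonality and density of polynomials gives the multiplier property, Theorem \ref{R-F-role-1} gives the joint $2$-isometry, and for (iv) both arguments use Theorem \ref{Richter-several} to convert invariance of the $\mathscr D(\mu)$-inner product into equality of all moments $\int_{\partial\mathbb B^d}\zeta^{\alpha}\overline{\zeta}^{\beta}\,d\mu$, then Stone--Weierstrass, the Riesz representation theorem and uniqueness of the unitarily invariant probability measure on $\partial\mathbb B^d$; your mechanics (pushing $\mu$ forward to $U_*\mu$ via $P[\mu]\circ U^{-1}=P[U_*\mu]$ and subtracting two instances of the formula) differ only cosmetically from the paper's, which reads the $k=1$ formula as $\sum_j\inp{z_jp}{z_jq}_{\mathscr D(\mu)}-\inp{p}{q}_{\mathscr D(\mu)}=\int p\overline{q}\,d\mu$ and applies the hypothesis directly. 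The genuine divergence is in (i): the paper argues computationally, expanding $f=\sum_\alpha\hat f(\alpha)z^\alpha$, integrating term by term over $R\mathbb B^d$ (justified by uniform convergence and Lemma \ref{lem-ortho}) and letting $R\rar 1^-$ to obtain the Parseval identity \eqref{norm-f}, from which convergence of the partial sums in $\mathscr D(\mu)$ is immediate; you instead invoke the strongly continuous unitary action of the rotation group and its isotypic decomposition, identifying the eigenspaces with the spaces of homogeneous polynomials (here you should note explicitly that bounded point evaluations, Remark \ref{Hardy-inclusion}(ii), let you identify the Bochner-integral projections with the homogeneous Taylor components, and that in (ii) the bounded extension from polynomials agrees with pointwise multiplication for the same reason). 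The paper's route is more elementary and delivers \eqref{norm-f} as a byproduct that it reuses for (ii); your route demands the extra functional-analytic scaffolding of strong continuity (correctly reduced to continuity of rotation in $L^2(P[\mu]\,dV)$, a finite measure of total mass $\mu(\partial\mathbb B^d)$), but is conceptually cleaner, avoids the interchange of sum and integral, and adapts readily to other rotation-invariant norms or the vector-valued setting.
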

\begin{proof}
Assume now that $\mu$ is $\mathbb T^d$-invariant.
Let $f(z)=\sum_{\alpha \in \mathbb Z^d_+} \hat{f}(\alpha) z^{\alpha} \in \mathscr D(\mu)$ for some $\hat{f}(\alpha) \in \mathbb C$ and let $0 < R < 1.$  Since $f$ is uniformly convergent on $R\mathbb B^d,$
by Lemma \ref{lem-ortho},
\beqn
&& \int_{R\mathbb B^d} \|\nabla f (z)\|^2 P[\mu](z)   dV(z) \\ &=& 
\int_{R\mathbb B^d}  \sum_{j=1}^d \alpha_j \beta_j \sum_{\alpha, \beta \in \mathbb Z^d_+}\hat{f}(\alpha) \overline{\hat{f}(\beta)} z^{\alpha-\varepsilon_j} \overline{z}^{\beta-\varepsilon_j} P[\mu](z) dV(z) \\
&=& \sum_{\alpha \in \mathbb Z^d_+}  |\hat{f}(\alpha)|^2  \int_{R\mathbb B^d}   \sum_{j=1}^d \alpha^2_j |z^{\alpha-\varepsilon_j}|^2  P[\mu](z)   dV(z) 
\\
&=& \sum_{\alpha \in \mathbb Z^d_+}  |\hat{f}(\alpha)|^2  \int_{R\mathbb B^d}   \|\nabla z^{\alpha}\|^2  P[\mu](z)   dV(z).
\eeqn
Letting $R \rar 1^{-}$ on both sides, we obtain \beq \label{norm-f} \|f\|^2_{\mathscr D(\mu)} = \sum_{\alpha \in \mathbb Z^d_+}  |\hat{f}(\alpha)|^2 \|z^{\alpha}\|^2_{\mathscr D(\mu)}. \eeq
In particular, the sequence of partial sum  $\sum_{|\alpha|\leqslant n} \hat{f}(\alpha) z^{\alpha}$ of $f$ converges to $f$ in $\mathscr D(\mu).$
Further, \eqref{norm-f} combined with \eqref{mono-estimate} yields that each $z_j,$ $j=1, \ldots, d$ is a multiplier for $\mathscr D(\mu),$ and hence the multiplication operator $\mathscr M_{z_j}$ is bounded on $\mathscr D(\mu).$ Thus, by Theorem \ref{R-F-role-1}, $\mathscr M_z$ on $\mathscr D(\mu)$ is a joint $2$-isometry. Finally, if $\inp{f \circ U}{g \circ U}_{\mathscr D(\mu)}=\inp{f}{g}_{\mathscr D(\mu)}$ for every unitary $d \times d$ matrix $U,$ then by Theorem \ref{Richter-several}, $$\int_{\partial \mathbb B^d} p \circ U(\zeta) \overline{q \circ U(\zeta)}d\mu(\zeta) = \int_{\partial \mathbb B^d} p(\zeta) \overline{q(\zeta)}d\mu(\zeta), \quad p, q \in \mathbb C[z_1, \ldots, z_d].$$
By Stone-Weierstrass and Riesz representation theorems, $\mu$ is invariant under the action of the unitary group. 
Hence, by \cite[Remark, Pg 16]{Ru-2}, 
$\mu$ is a scalar multiple of $\sigma.$
\end{proof}

We now exhibit a family of rotation invariant measures for which the Dirichlet-type spaces and the associated multiplication tuples can be described explicitly (it is worth noting that, up to a scalar multiple, the only rotation invariant measure on the unit circle is the arc-length measure).
\begin{example}
Let $\lambda \in \mathbb R$ and $c=(c_1, \ldots, c_d) \in \mathbb R^d$ be such that $\lambda > \max_{j=1}^d |c_j|$ and $\sum_{j=1}^d c_j=0.$ 
Consider the weighted surface area measure $\mu_{\lambda, c}$ on $\partial \mathbb B^d$
with weight function $w_{\lambda, c}$ given by
\beqn
w_{\lambda, c}(z) := \lambda + \sum_{j=1}^d c_j |z_j|^2, \quad z \in  \mathbb C^d.
\eeqn 
Clearly, $w_{\lambda, c}|_{\partial \mathbb B^d}$ is positive and $\mu_{\lambda, c}$ is $\mathbb T^d$-invariant.  Consider the Dirichlet-type space $\mathscr D(\mu_{\lambda, c})$ associated with $\mu_{\lambda, c}.$
It follows from Proposition \ref{prop-rotation-invariant} that $\big\{e_{\alpha} := z^\alpha /\|z^{\alpha}\|_{\mathscr D(\mu_{\lambda, c})} : \alpha \in \mathbb Z^d_+ \big\}$ is an orthonormal basis for $\mathscr D(\mu_{\lambda, c})$ and the $d$-tuple $\mathscr M_z$ on $\mathscr D(\mu_{\lambda, c})$ defines a joint $2$-isometry.  Moreover, $\mathscr M_z$ is a weighted multishift for some weights $ \mathbf w=\big\{\mathbf w^{(j)}_{\alpha} : \alpha \in \mathbb Z^d_+, ~j=1, \ldots, d \big\} \subseteq (0, \infty)$, that is, 
\beqn
\mathscr M_{z_j}e_{\alpha} = \mathbf w^{(j)}_{\alpha} e_{\alpha + \varepsilon_j}, \quad \alpha \in \mathbb Z^d_+, ~j=1, \ldots, d.
\eeqn  
To compute the weights $\mathbf w$ of $\mathscr M_z,$ note first that 
$ w_{\lambda, c}$ defines a harmonic function on $\mathbb C^d,$ and hence by the uniqueness part of Theorem \ref{DP},
$P[\mu_{\lambda, c}]=w_{\lambda, c}$ on $\mathbb B^d.$ It now follows from \eqref{Diri-formula} and the assumption $\sum_{j=1}^dc_k=0$ that
\beqn
\|z^{\alpha}\|^2_{\mathscr D(\mu_{\lambda, c})} &=& \|z^{\alpha}\|^2_{H^2(\mathbb B^d)} + \frac{1}{d}\int_{\mathbb B^d}    \|\nabla z^{\alpha}\|^2  w_{\lambda, c}(z) dV(z) \\ &=& \frac{\alpha!(d-1)!}{(|\alpha|+d-1)!} + \lambda \sum_{j=1}^d \alpha^2_j \frac{(\alpha - \varepsilon_j)!(d-1)!}{(|\alpha|+d-1)!} \\ 
&+& \sum_{k=1}^d c_k \sum_{j=1}^d \alpha^2_j \frac{(\alpha -\varepsilon_j + \varepsilon_k)!(d-1)!}{(|\alpha|+d)!} \\
&=& \frac{\alpha!(d-1)!}{(|\alpha|+d-1)!} \bigg(1 + \lambda |\alpha| +  \frac{|\alpha|-1}{|\alpha|+d}\sum_{j=1}^d c_k \alpha_k \bigg).
\eeqn  
Set $L(k)=1 + \lambda k,$ $k \in \mathbb Z_+$ and $K_c (\alpha) = \frac{|\alpha|-1}{|\alpha|+d}\sum_{j=1}^d c_k \alpha_k,$ $\alpha \in \mathbb Z^d_+.$   Then the weights $\mathbf w^{(j)}_{\alpha},$ $j=1, \ldots, d,$ $\alpha \in \mathbb Z^d_+,$ of $\mathscr M_z$ are given by
\beqn
 \frac{\|z^{\alpha + \epsilon_j}\|^2_{\mathscr D(\mu_{\lambda, c})}}{\|z^{\alpha}\|^2_{\mathscr D(\mu_{\lambda, c})}} = {\frac{\alpha_j+1}{|\alpha|+d}} \cdot {\frac{L(|\alpha|+1) +  K_c (\alpha + \varepsilon_j) }{L(|\alpha|) +  K_c (\alpha) }}.
\eeqn
Furthermore, by Proposition \ref{prop-rotation-invariant}, $\mathscr M_z$ is a joint $2$-isometry. 
 Finally, we note that in case $d=2,$ the choices $\lambda=1$ and $c=0,$ the Dirichlet-type space $\mathscr D(\mu_{\lambda, c})$ is nothing but the Drury-Arveson space $H^2_2.$
 \eop
\end{example}


Following \cite[Section 1]{AD}, we say that a measure $\mu \in M_+(\partial \mathbb B^d)$ is {\it pluriharmonic} if $P[\mu]$ is a pluriharmonic function in the open unit ball $\mathbb B^d.$
We now exhibit Dirichlet-type spaces associated with a family of pluriharmonic measures (see \cite[Pg 44]{Ru0}).

\begin{proposition} \label{pluri}
For $h$ belonging to the ball algebra $A(\mathbb B^d)$ such that $\mathsf w(z) :=\Re(h(z)) \geqslant 0$ for every $z \in \partial \mathbb B^d,$ let $\mu_\mathsf w$ be the weighted surface area measure with weight function $\mathsf w|_{\partial \mathbb B^d}.$ 
Then
the $d$-tuple $\mathscr M_z$ on $\mathscr D(\mu_{\mathsf w})$ defines a joint $2$-isometry.
\end{proposition}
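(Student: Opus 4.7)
The plan is to reduce the assertion to Corollary~\ref{Coro-multi}. The key observation is that membership in the ball algebra $A(\mathbb{B}^d)$ already forces the weight $\mathsf{w}$ to be bounded on the sphere, which is exactly the hypothesis needed in the earlier corollary.

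First I would note that since $h \in A(\mathbb{B}^d)$, the function $h$ is continuous on the compact set $\overline{\mathbb{B}}^d$, so $\mathsf{w} = \Re(h)$ is continuous on $\overline{\mathbb{B}}^d$ and in particular bounded on $\partial \mathbb{B}^d$. Combined with the hypothesis $\mathsf{w} \geqslant 0$ on $\partial \mathbb{B}^d$, this makes the restriction $\mathsf{w}|_{\partial \mathbb{B}^d}$ a bounded, non-negative, Borel measurable function on the unit sphere, placing us precisely in the setting of Corollary~\ref{Coro-multi}. A direct invocation of that corollary then yields that the $d$-tuple $\mathscr{M}_z$ on $\mathscr{D}(\mu_{\mathsf{w}})$ is a joint $2$-isometry.

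To underline why pluriharmonicity is the right organizing principle (so that the proposition sits naturally in this section), I would also record the following identification of the Poisson integral. The function $\Re(h)$ is pluriharmonic (hence harmonic) on $\mathbb{B}^d$ and continuous on $\overline{\mathbb{B}}^d$, so by the uniqueness clause of Theorem~\ref{DP} we obtain
\[
P[\mu_{\mathsf{w}}](z) \;=\; \int_{\partial \mathbb{B}^d} P(z,\zeta)\,\mathsf{w}(\zeta)\,d\sigma(\zeta) \;=\; \Re(h(z)), \qquad z \in \mathbb{B}^d.
\]
In particular, $P[\mu_{\mathsf{w}}]$ extends continuously to $\overline{\mathbb{B}}^d$ and is bounded above by $\sup_{\overline{\mathbb{B}}^d}|\Re(h)| < \infty$; this is exactly the ingredient that the proof of Corollary~\ref{Coro-multi} exploits (via Proposition~\ref{prop-multi} and Theorem~\ref{R-F-role-1}), and it explains why $\mu_{\mathsf{w}}$ qualifies as a \emph{pluriharmonic} measure in the sense of \cite{AD}.

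I do not anticipate any serious obstacle here: the proposition is essentially an immediate consequence of the earlier results, with the ball-algebra hypothesis doing nothing more than providing a bounded weight via continuity on $\overline{\mathbb{B}}^d$. The only mild subtlety worth flagging is that one should apply Corollary~\ref{Coro-multi} to $\mathsf{w}|_{\partial \mathbb{B}^d}$ rather than to $\mathsf{w}$ on all of $\overline{\mathbb{B}}^d$, since the former is the object entering the definition of $\mu_{\mathsf{w}}$.
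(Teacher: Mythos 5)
Your proof is correct and follows the same route as the paper, which deduces the proposition directly from Corollary~\ref{Coro-multi}; your observation that $h\in A(\mathbb B^d)$ makes $\mathsf w|_{\partial\mathbb B^d}$ a bounded nonnegative measurable weight is precisely the (implicit) reduction the paper relies on. The additional identification $P[\mu_{\mathsf w}]=\Re(h)$ via Theorem~\ref{DP} is correct but not needed for the argument.
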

\begin{proof}
This is immediate from Corollary \ref{Coro-multi}.
\end{proof}

We exhibit below Dirichlet-type spaces associated with a family of pluriharmonic measures, which are not necessarily rotation-invariant.
\begin{example}
Let $\lambda \in \mathbb R$ and $b=(b_1, \ldots, b_d) \in \mathbb R^d$ be such that $\lambda^2 > 2\sum_{j=1}^d |b_j|^2.$  
Consider the weighted surface area measure $\mu_{b, \lambda}$ on $\partial \mathbb B^d$
with weight function $w_{b, \lambda}$ given by
\beqn
w_{b, \lambda}(z) := \lambda + 2\sum_{j=1}^d \Re(b_j z_j), \quad z \in  \mathbb C^d.
\eeqn 
Since $w_{b, \lambda}$ is positive on $\partial \mathbb B^d,$ we may consider the Dirichlet-type space $\mathscr D(\mu_{b, \lambda})$ associated with $\mu_{b, \lambda}.$ Further, since $\mu_{b, \lambda}$ is a pluriharmonic measure, 
by Proposition \ref{pluri}, the $d$-tuple $\mathscr M_z$ on $\mathscr D(\mu_{b, \lambda})$ is a joint $2$-isometry.
We claim that
\beq 
\label{i-p-non-r-1}
\inp{z^{\alpha}}{z^{\beta}}_{\circ} = \begin{cases} \lambda \frac{|\alpha|\alpha!(d-1)!}{(|\alpha|+d-1)!}  & \mbox{if~} \alpha = \beta, \vspace{.1cm}\\ 
\frac{|\alpha|(\alpha +\varepsilon_l)!(d-1)!}{(|\alpha|+d)!} \, b_l  & \mbox{if}~ \alpha + \varepsilon_l=\beta, ~l=1, \ldots, d,\vspace{.1cm} \\ 
\frac{(|\alpha|-1)\alpha!(d-1)!}{(|\alpha|+d-1)!}  \,\overline{b}_l  & \mbox{if}~ \beta + \varepsilon_l=\alpha, ~l=1, \ldots, d, \vspace{.1cm}\\
0 & \mbox{otherwise}.
\end{cases}
\eeq
To see this, let $\alpha, \beta \in \mathbb Z^d_+$ and let $\delta_{\alpha, \beta}$ denote the Kronecker delta function. Note that for $l=1, \ldots, d,$ by \eqref{Diri-formula}, 
\beqn
\int_{\mathbb B^d}  \inp{\nabla z^{\alpha}}{\nabla z^{\beta}}  z_l dV(z) &=&
 \delta_{\alpha + \varepsilon_l, \beta}  \frac{d!}{(|\alpha|+d)!}   \sum_{k=1}^d \alpha_k \beta_k   (\alpha + \varepsilon_l -\varepsilon_k)! 
 \\ 
 &=& \delta_{\alpha + \varepsilon_l, \beta}  \frac{|\alpha|(\alpha + \varepsilon_l)!d!}{(|\alpha|+d)!}. 
\eeqn
Similarly, one may see that
\beqn
\int_{\mathbb B^d}  \inp{\nabla z^{\alpha}}{\nabla z^{\beta}}  \overline{z}_l dV(z)
 = \delta_{\alpha, \beta + \varepsilon_l}  \frac{(|\alpha|-1)\alpha! d!}{(|\alpha|+d-1)!}, \quad l=1, \ldots, d.
\eeqn
Combining these two identities with \eqref{Diri-formula}, we obtain \eqref{i-p-non-r-1}. 
It follows that $\mathscr M_z$ is a joint $2$-isometry that is not a weighted multishift, unless $b=0.$
\eop
\end{example}


\section{A complex moment problem and a Gramian matrix associated with joint $m$-isometries}


In the first half of this section, we discuss a complex moment problem in several variables
that arises naturally in the study of joint m-isometries. We have already encountered such a problem in Theorem \ref{Richter-several}. 
The solution to this moment problem can be considered as a spherical analog of the solution of the trigonometric moment problem. In the scalar one-dimensional case, this is commonly attributed to Akhiezer and Krein (see \cite[Theorem 1.4]{ST}). In the general case, this can be derived from \cite[Proposition 34]{SS} and \cite[Proposition 2.1]{St}.  Although this result has been known to experts in the moment theory (refer to \cite{F, St, SS, CSS}), we could not locate it in the form we need in this paper. Hence we include the statement and relegate its proof to the appendix. 
\begin{theorem} \label{CMP}
For a complex Hilbert space $\mathcal M,$ let
$\phi : \mathbb Z^d_+ \times \mathbb Z^d_+ \rar \mathcal B(\mathcal M)$ be a $\mathcal B(\mathcal M)$-valued kernel function. Then the following are equivalent$:$
\begin{enumerate}
\item[$(i)$] There exists $F \in M_+(\partial \mathbb B^d, \mathcal B(\mathcal M))$ such that 
\beq \label{TMP}
\phi(\alpha, \beta) =  \int_{\partial \mathbb B^d} \zeta^{\alpha} \overline{\zeta}^{\beta} dF(\zeta), \quad \alpha, \beta \in \mathbb Z^d_+.
\eeq
\item[$(ii)$] The kernel function $\phi$ is positive definite and 
\beq  \label{s-Toeplitz} \sum_{j=1}^d \phi(\alpha + \varepsilon_j, \beta + \varepsilon_j) =\phi(\alpha, \beta), \quad  \alpha, \beta \in \mathbb Z^d_+. \eeq
\end{enumerate} 
The semispectral measure $F$ is uniquely determined by \eqref{TMP}.
\end{theorem}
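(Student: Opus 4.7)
The forward implication $(i) \Rightarrow (ii)$ is immediate: for any finitely supported family $\{x_\alpha\}_{\alpha \in \mathbb Z^d_+} \subset \mathcal M$, positivity follows from
$$\sum_{\alpha, \beta} \langle \phi(\alpha, \beta) x_\alpha, x_\beta \rangle_{\mathcal M} \,=\, \int_{\partial \mathbb B^d} \Big\langle \sum_\alpha \zeta^\alpha x_\alpha,\, \sum_\beta \zeta^\beta x_\beta \Big\rangle_{\mathcal M}\, d\nu(\zeta) \,\geqslant\, 0,$$
where $\nu$ is a scalar control measure dominating $F$, and the spherical Toeplitz condition \eqref{s-Toeplitz} drops out upon integrating the identity $\sum_{j=1}^d |\zeta_j|^2 = 1$ on $\partial \mathbb B^d$ against $\zeta^\alpha \overline{\zeta}^\beta$.

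For the converse $(ii) \Rightarrow (i)$ the plan is a Naimark-type dilation. Let $V$ be the vector space of finitely supported $\mathcal M$-valued functions on $\mathbb Z^d_+$, equipped with the sesquilinear form
$$\langle f, g \rangle_V := \sum_{\alpha, \beta \in \mathbb Z^d_+} \langle \phi(\alpha, \beta) f(\alpha), g(\beta) \rangle_{\mathcal M}.$$
Positive definiteness of $\phi$ makes this form positive semidefinite; modding out its null space and completing produces a Hilbert space $\mathcal H$. Writing $[\alpha, x] \in \mathcal H$ for the image of the sequence supported at $\alpha$ with value $x$, I introduce shift operators $T_j$ on the dense linear span of such vectors by $T_j [\alpha, x] := [\alpha + \varepsilon_j, x]$.

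The spherical Toeplitz identity \eqref{s-Toeplitz} then gives, for every $f \in V$,
$$\sum_{j=1}^d \|T_j f\|^2 \,=\, \sum_{\gamma, \gamma'} \sum_{j=1}^d \langle \phi(\gamma + \varepsilon_j, \gamma' + \varepsilon_j) f(\gamma), f(\gamma') \rangle_{\mathcal M} \,=\, \sum_{\gamma, \gamma'} \langle \phi(\gamma, \gamma') f(\gamma), f(\gamma')\rangle_{\mathcal M} \,=\, \|f\|^2_V,$$
so each $T_j$ descends to the quotient, extends to a bounded operator on $\mathcal H$, and the commuting $d$-tuple $T = (T_1, \ldots, T_d)$ satisfies $\sum_{j=1}^d T_j^* T_j = I_{\mathcal H}$, i.e. $T$ is a joint isometry. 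By Athavale's subnormality theorem for spherical isometries \cite{At}, $T$ admits a commuting normal extension $N = (N_1, \ldots, N_d)$ on some $\mathcal K \supseteq \mathcal H$ whose joint spectrum is contained in $\partial \mathbb B^d$; let $E_N$ be its spectral measure. Setting $V_0 : \mathcal M \to \mathcal H \subseteq \mathcal K$ by $V_0 x := [0, x]$, I define $F(\omega) := V_0^* E_N(\omega) V_0 \in \mathcal B(\mathcal M)$. This is manifestly a $\mathcal B(\mathcal M)$-valued semispectral measure on $\partial \mathbb B^d$, and using $N^\alpha V_0 x = T^\alpha [0, x] = [\alpha, x]$ together with Fuglede's theorem (so that $N^\alpha$ and $N^{*\beta}$ commute) one computes
$$\int_{\partial \mathbb B^d} \zeta^\alpha \overline{\zeta}^\beta\, d \langle F(\zeta) x, y \rangle_{\mathcal M} \,=\, \langle N^{*\beta} N^\alpha V_0 x, V_0 y\rangle_{\mathcal K} \,=\, \langle [\alpha, x], [\beta, y]\rangle_{\mathcal H} \,=\, \langle \phi(\alpha, \beta) x, y\rangle_{\mathcal M},$$
which is \eqref{TMP}.

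Uniqueness is then a consequence of Stone--Weierstrass: the $*$-algebra generated by the coordinate functions $\zeta_1, \ldots, \zeta_d$ is dense in $C(\partial \mathbb B^d)$, so \eqref{TMP} determines $\int p\, d\langle F(\cdot) x, y\rangle$ for every continuous $p$; the Riesz representation theorem then pins down the scalar measures $\langle F(\cdot) x, y\rangle$ for all $x, y \in \mathcal M$, and polarization recovers $F$ itself. The chief technical input in this plan is Athavale's dilation theorem for spherical isometries; everything else is a routine GNS/Naimark construction combined with the functional calculus for commuting normal tuples.
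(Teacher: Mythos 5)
Your proposal is correct and follows essentially the same route as the paper's appendix: a GNS-type completion of the $\mathcal M$-valued polynomials (your finitely supported functions on $\mathbb Z^d_+$) on which \eqref{s-Toeplitz} makes the shift tuple a joint isometry, then Athavale's subnormality of spherical isometries (the paper cites it together with the Athavale--Pedersen Bram-type characterization instead of passing explicitly through the normal extension's spectral measure, which amounts to the same thing), followed by compression along the embedding $x \mapsto [0,x]$ and uniqueness via Stone--Weierstrass and Riesz representation. The only loose point is the displayed positivity computation in $(i)\Rightarrow(ii)$, which as written presupposes an operator-valued density of $F$ with respect to the control measure $\nu$; this is easily repaired (e.g.\ via a Naimark dilation of $F$), and the paper treats that direction as routine in any case.
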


Here is an application of Theorem \ref{TMP} to joint $m$-isometries.
\begin{corollary} 
Let $T$ be a joint $m$-isometric $d$-tuple on $\mathcal H.$ Then there exists $F \in M_+(\partial \mathbb B^d, \mathcal B(\mathcal H))$ such that
\beqn
\sum_{j=0}^{m-1} (-1)^{j+m-1} \binom{m-1}{j}  T^{*\beta}Q^j_T(I)T^{\alpha} = \int_{\partial \mathbb B^d}\zeta^{\alpha}\overline{\zeta}^{\beta}dF(\zeta), \quad \alpha, \beta  \in \mathbb Z^d_+.
\eeqn
\end{corollary}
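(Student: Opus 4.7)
The plan is to apply Theorem \ref{CMP} with $\mathcal M = \mathcal H$ to the $\mathcal B(\mathcal H)$-valued kernel function
\[
\phi(\alpha, \beta) := \sum_{j=0}^{m-1} (-1)^{j+m-1} \binom{m-1}{j} T^{*\beta} Q^j_T(I) T^{\alpha}, \quad \alpha, \beta \in \mathbb Z^d_+,
\]
which, upon factoring out the sign, is nothing other than $(-1)^{m-1} T^{*\beta} B_{m-1}(T) T^{\alpha}$ with $B_{m-1}(T)$ as in \eqref{BMQT}. Once the positive definiteness of $\phi$ and the Toeplitz-type condition \eqref{s-Toeplitz} are verified, the theorem immediately supplies the desired $F \in M_+(\partial \mathbb B^d, \mathcal B(\mathcal H))$.

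I would tackle \eqref{s-Toeplitz} first, as it is essentially a bookkeeping computation. Using the identity $\sum_{k=1}^d T_k^* Q^j_T(I) T_k = Q^{j+1}_T(I)$, the sum $\sum_{k=1}^d \phi(\alpha + \varepsilon_k, \beta + \varepsilon_k)$ becomes the same expression defining $\phi(\alpha, \beta)$ but with $Q^j_T(I)$ replaced by $Q^{j+1}_T(I)$. A shift of the summation index, together with Pascal's identity $\binom{m-1}{k-1} + \binom{m-1}{k} = \binom{m}{k}$, then shows that
\[
\sum_{k=1}^d \phi(\alpha + \varepsilon_k, \beta + \varepsilon_k) - \phi(\alpha, \beta) = (-1)^m T^{*\beta} B_m(T) T^{\alpha},
\]
which vanishes by the joint $m$-isometry hypothesis.

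The main obstacle is positive definiteness. For any finitely supported family $\{h_\alpha\} \subseteq \mathcal H$, setting $g := \sum_{\alpha} T^{\alpha} h_\alpha$ one computes $\sum_{\alpha, \beta} \langle \phi(\alpha, \beta) h_\alpha, h_\beta \rangle = (-1)^{m-1} \langle B_{m-1}(T) g, g \rangle,$ so the problem reduces to the operator inequality $(-1)^{m-1} B_{m-1}(T) \geqslant 0$. My plan for this is to first establish the commutation identity
\[
\Delta^k Q^n_T(I) = (-1)^k Q^n_T(B_k(T)), \quad n, k \in \mathbb Z_+,
\]
where $\Delta$ denotes the forward-difference operator in $n$. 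Specialising at $k = m$ and invoking $B_m(T) = 0$ yields $\Delta^m Q^n_T(I) = 0$ for all $n$, so $n \mapsto Q^n_T(I)$ is a $\mathcal B(\mathcal H)$-valued polynomial in $n$ of degree at most $m-1$. The Newton forward-difference expansion then gives
\[
Q^n_T(I) = \sum_{k=0}^{m-1} \binom{n}{k} (-1)^k B_k(T), \quad n \in \mathbb Z_+.
\]
Dividing by $\binom{n}{m-1}$ and passing to the limit $n \to \infty$ in norm, the non-negativity of every $Q^n_T(I)$ forces the leading coefficient $(-1)^{m-1} B_{m-1}(T)$ to be a positive operator.

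Having verified both hypotheses of Theorem \ref{CMP}, one obtains a semispectral measure $F \in M_+(\partial \mathbb B^d, \mathcal B(\mathcal H))$ such that $\phi(\alpha, \beta) = \int_{\partial \mathbb B^d} \zeta^{\alpha} \overline{\zeta}^{\beta} \, dF(\zeta)$ for all $\alpha, \beta \in \mathbb Z^d_+$, which is precisely the claimed identity.
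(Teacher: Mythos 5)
Your proposal is correct and follows essentially the same route as the paper: rewrite the kernel as $\phi(\alpha,\beta)=(-1)^{m-1}T^{*\beta}B_{m-1}(T)T^{\alpha}$, check positive definiteness and the condition \eqref{s-Toeplitz}, and invoke Theorem \ref{CMP}. The only difference is that where the paper simply cites \cite[Proposition 2.3]{GR} for $(-1)^{m-1}B_{m-1}(T)\geqslant 0$ and \cite[Equation (2.1)]{GR} for $B_{m-1}(T)=Q_T(B_{m-1}(T))$, you reprove these facts (via the Pascal-identity computation and the forward-difference/Newton-expansion argument), both of which are sound.
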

\begin{proof} 
Define a $\mathcal B(\mathcal H)$-valued kernel function $\phi : \mathbb Z^d_+ \times \mathbb Z^d_+ \rar \mathcal B(\mathcal H)$ by 
\beqn
\phi(\alpha, \beta) = \sum_{j=0}^{m-1} (-1)^{j+m-1}\binom{m-1}{j}  T^{*\beta}Q^j_T(I)T^{\alpha}, \quad \alpha, \beta  \in \mathbb Z^d_+.
\eeqn
By \cite[Proposition 2.3]{GR}, $(-1)^{m-1}B_{m-1}(T) \geqslant 0$ (see \eqref{BMQT}), and hence \beq \label{id-m-iso} \phi(\alpha, \beta)=T^{*\beta}((-1)^{m-1}B_{m-1}(T))T^{\alpha} \eeq is positive definite.
In view of Theorem \ref{CMP}, it now suffices to check that $\phi$ satisfies \eqref{s-Toeplitz}. Since $T$ is an $m$-isometry, it follows from the identity $B_{m}(T)=B_{m-1}(T)-Q_T(B_{m-1}(T))$ that 
$B_{m-1}(T)=Q_T(B_{m-1}(T))$ (see \cite[Equation (2.1)]{GR}). Hence, by \eqref{id-m-iso}, for every $\alpha, \beta  \in \mathbb Z^d_+,$
\beqn
\sum_{j=1}^d \phi(\alpha + \varepsilon_j, \beta + \varepsilon_j)  &=& \sum_{j=1}^d (-1)^{m-1} T^{*\beta + \varepsilon_j}B_{m-1}(T)T^{\alpha + \varepsilon_j} \\ &=&  (-1)^{m-1} T^{*\beta} Q_T(B_{m-1}(T))T^{\alpha} \\
&=& (-1)^{m-1} T^{*\beta} B_{m-1}(T)T^{\alpha} \\
&=& \phi(\alpha, \beta).
\eeqn
This completes the proof.
\end{proof}
In case the joint $2$-isometric $d$-tuple $T$ admits the wandering subspace property, the semispectral measure $F$ as ensured in the above corollary may be replaced by a $\mathcal B(\ker T^*)$-valued semispectral measure, where $\ker T^*$ denotes the joint kernel $\cap_{j=1}^d \ker T^*_j$ of $T^*.$
As we will see in the remaining part of this section, this leads to a model theorem for the spherical moments of joint
$m$-isometries that admit the wandering subspace property.

Following \cite{Sh}, we define a $d$-tuple $\sigma$ of {\it matrix backward shifts} $\sigma_1, \ldots, \sigma_d$ on the matrix array $\mathscr A=[\![\mathscr A_{\alpha, \beta}]\!]_{\alpha, \beta \in \mathbb Z^d_+}$ as follows:
\beqn
\sigma_j \mathscr A  := \big[\!\big[ \mathscr A_{\alpha + \varepsilon_j, \beta + \varepsilon_j}\big]\!\big]_{\alpha, \beta \in \mathbb Z^d_+}, \quad j=1, \ldots, d.
\eeqn 
Since the actions of $\sigma_1, \ldots, \sigma_d$ are mutually commuting, we get the following:
\beq \label{sigma-gamma}
\sigma^\gamma \mathscr A  = \big[\!\big[ \mathscr A_{\alpha + \gamma, \beta + \gamma}\big]\!\big]_{\alpha, \beta \in \mathbb Z^d_+}, \quad \gamma \in \mathbb Z^d_+.
\eeq
For an integer $n \in \mathbb Z_+,$ let $\triangle_{\mathscr A, n}$ denote the matrix given by
\beq
\label{defect-Gram}
\triangle_{\mathscr A, n}:=\sum_{j=0}^n (-1)^{j+n} \binom{n}{j} \sum_{|\gamma|=j} \frac{|\gamma|!}{\gamma!} \, \sigma^\gamma \mathscr A.
\eeq 
To state the main result of this section, we need another notion. A commuting $d$-tuple $T$ on $\mathcal H$ admits the {\it wandering subspace property} if $$\mathcal H = \bigvee \{T^{\alpha}h : h \in \ker T^*, \alpha \in \mathbb Z^d_+\}.$$  Note that the joint kernel $\ker T^*$ of $T^*$ is a {\it wandering subspace} in the sense that $T^{\alpha}(\ker T^*)$ is orthogonal to $\ker T^*$ for every non-zero $\alpha \in \mathbb Z^d_+.$

We are now ready to state the main result of this section.
\begin{theorem} \label{main-thm}
Let $\mathcal H$ be a separable Hilbert space and let $T$ be a commuting $d$-tuple on $\mathcal H.$ Let $\{f_{j}\}_{j \in I}$ be an orthonormal basis for the joint kernel of $T^*$ for some nonempty directed set $I.$  Consider the Gramian matrix $\mathscr G=[\![\mathscr G_{\alpha, \beta}]\!]_{\alpha, \beta \in \mathbb Z^d_+}$ associated with $T$ given by
\beqn
\mathscr G_{\alpha, \beta} := \big[\!\big[ \inp{T^{\beta}f_j}{T^{\alpha}f_i}\big]\!\big]_{i, j \in I}, \quad \alpha, \beta \in \mathbb Z^d_+,
\eeqn
and let $\triangle_{\mathscr G, j}$ be as defined in \eqref{defect-Gram} $($with $\mathscr A$ replaced by $\mathscr G).$
Assume that $T$ admits the wandering subspace property. 
Then the following are equivalent$:$
\begin{enumerate}
\item[$(i)$] $T$ is a joint $m$-isometry, 
\item[$(ii)$] $\triangle_{\mathscr G, m}=0$,
\item[$(iii)$] $\sum_{|\gamma|=k}  \frac{|\gamma|!}{\gamma!} \, \sigma^\gamma \mathscr G = \sum_{j=0}^{m-1} \binom{k}{j} \triangle_{\mathscr G, j}$ for every $k \in \mathbb Z_+,$ where we used the convention that $\binom{k}{j}=0$ if $k < j,$
\item[$(iv)$] $\sum_{j=1}^d \sigma_j \triangle_{\mathscr G, m-1} = \triangle_{\mathscr G, m-1}.$
\end{enumerate}
 Moreover, if $(i)$ holds, then $\triangle_{\mathscr G, m-1} \geqslant 0,$ and there exists a semi-spectral measure $F \in M_+(\partial \mathbb B^d, \mathcal B(\ell^2(I)))$ such that for every $\alpha, \beta \in \mathbb Z^d_+,$
 \beq \label{measure-F}
\sum_{j=0}^{m-1} (-1)^{j+m-1} \binom{m-1}{j}  \sum_{|\gamma|=j} \frac{|\gamma|!}{\gamma!} \, \mathscr G_{\alpha + \gamma, \beta + \gamma} =
 \int_{\partial \mathbb B^d} \zeta^{\alpha} \overline{\zeta}^{\beta} dF(\zeta).
 \eeq
\end{theorem}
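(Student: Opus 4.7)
My approach is to translate every matrix-array condition into an operator identity on $\mathcal H$ via the wandering subspace property, then invoke standard combinatorial relations between $B_m(T)$ and $Q_T^k(I)$. The dictionary is as follows: by \eqref{QT-n}, the $(\alpha,\beta)$ block of $\sum_{|\gamma|=k} \frac{|\gamma|!}{\gamma!} \sigma^\gamma \mathscr G$ is precisely $\big[\!\big[\langle Q_T^k(I) T^\beta f_\ell, T^\alpha f_i \rangle\big]\!\big]_{i,\ell \in I}$, and therefore by \eqref{BMQT} the $(\alpha,\beta)$ block of $\triangle_{\mathscr G, j}$ equals $\big[\!\big[\langle (-1)^j B_j(T) T^\beta f_\ell, T^\alpha f_i \rangle\big]\!\big]_{i,\ell \in I}$. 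Because $T$ admits the wandering subspace property, a bounded operator $A$ on $\mathcal H$ vanishes if and only if every such block is zero for all $\alpha,\beta \in \mathbb Z_+^d$; this gives (i)$\Leftrightarrow$(ii) at once.

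For (ii)$\Leftrightarrow$(iv), I would compute the $(\alpha,\beta)$ block of $\sum_{j=1}^d \sigma_j \triangle_{\mathscr G, m-1}$, which works out to $\big[\!\big[\langle (-1)^{m-1} Q_T(B_{m-1}(T)) T^\beta f_\ell, T^\alpha f_i \rangle\big]\!\big]$. Combining this with the Pascal-triangle identity $B_m(T) = B_{m-1}(T) - Q_T(B_{m-1}(T))$ already used in the Corollary just above, one obtains the matrix-array relation
\[
\sum_{j=1}^d \sigma_j \triangle_{\mathscr G, m-1} \;=\; \triangle_{\mathscr G, m-1} + \triangle_{\mathscr G, m},
\]
so (iv) is equivalent to $\triangle_{\mathscr G, m}=0$. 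For (ii)$\Leftrightarrow$(iii), I would invoke Newton's forward-difference inversion: writing $A_k(\mathscr G) := \sum_{|\gamma|=k}\frac{|\gamma|!}{\gamma!}\sigma^\gamma \mathscr G$, the defining relation between $(A_k(\mathscr G))_k$ and $(\triangle_{\mathscr G, j})_j$ is a purely combinatorial invertible transform, and its inverse reads $A_k(\mathscr G) = \sum_{j=0}^k \binom{k}{j}\triangle_{\mathscr G, j}$. Hence (iii) is equivalent to $\triangle_{\mathscr G, j}=0$ for every $j \geqslant m$. Since iterating the Pascal identity yields $B_{m+r}(T)=(I-Q_T)^r B_m(T)$, the condition $B_m(T)=0$ propagates to $B_j(T)=0$ for all $j \geqslant m$, so this in turn reduces to $\triangle_{\mathscr G, m}=0$.

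For the moreover clause, assuming (i), \cite[Proposition 2.3]{GR} yields $(-1)^{m-1}B_{m-1}(T) \geqslant 0$. Identifying the kernel $\phi(\alpha,\beta) := \triangle_{\mathscr G, m-1}(\alpha,\beta)$ with a $\mathcal B(\ell^2(I))$-valued kernel on $\mathbb Z_+^d$ through the isometry $R : \ell^2(I) \to \ker T^*$ sending the standard basis to $\{f_j\}_{j \in I}$, one writes $\phi(\alpha,\beta) = R^* T^{*\alpha}(-1)^{m-1}B_{m-1}(T)T^\beta R$, so that the defining sesquilinear form on a finite family $\{(\alpha^{(k)}, x^{(k)})\}$ collapses to $\langle (-1)^{m-1}B_{m-1}(T)y, y\rangle_{\mathcal H}$ with $y = \sum_k T^{\alpha^{(k)}}Rx^{(k)}$, which is nonnegative; this establishes the positivity of $\phi$ as a kernel (and in particular the unboxed assertion $\triangle_{\mathscr G, m-1} \geqslant 0$). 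Condition (iv), already equivalent to (i), is exactly the semi-Toeplitz relation \eqref{s-Toeplitz} for $\phi$, so Theorem \ref{CMP} delivers the desired $F \in M_+(\partial \mathbb B^d, \mathcal B(\ell^2(I)))$ satisfying \eqref{measure-F}. The main obstacle in writing this out rigorously is the bookkeeping: keeping the multinomial coefficients $\frac{|\gamma|!}{\gamma!}$ and the alternating signs in phase while translating between the operator-valued objects $B_j(T), Q_T^k(I)$ and the matrix arrays $\triangle_{\mathscr G, j}, A_k(\mathscr G)$, and packaging $\phi$ so that both hypotheses of Theorem \ref{CMP} are visibly verified.
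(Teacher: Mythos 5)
Your argument is correct and follows essentially the same route as the paper: the wandering subspace property is used to identify the blocks of $\sigma^\gamma\mathscr G$ and $\triangle_{\mathscr G,j}$ with compressions of $Q_T^k(I)$ and $(-1)^jB_j(T)$ to the dense span of $\{T^{\alpha}f_i\}$, the equivalences then come from the identities $B_m(T)=B_{m-1}(T)-Q_T(B_{m-1}(T))$ and $Q^k_T(I)=\sum_{j=0}^{m-1}(-1)^j\binom{k}{j}B_j(T)$ of \cite{GR}, and the moreover clause is obtained by feeding the blocks of $\triangle_{\mathscr G,m-1}$ (positive via \cite[Proposition 2.3]{GR}) into Theorem \ref{CMP}. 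The only differences are cosmetic: the paper runs the dictionary through quadratic forms $\inp{\cdot\, C(f)}{C(f)}$ on coefficient vectors and settles (iii) with the finite-difference fact from \cite{JJS}, whereas you use block compressions and binomial inversion, which carry the same combinatorial content.
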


In the proof of Theorem \ref{main-thm}, we need a lemma that relates certain operator identities to the respective matrix identities involving the Gramian matrix (cf. \cite[Theorem 2.15]{Sh}). 
\begin{lemma} \label{main-lem-1}
Let $T, \mathscr G, \triangle_{\mathscr G, m}$ be as in the statement of Theorem \ref{main-thm}.
If $T$ admits the wandering subspace property, then the following are true$:$
\begin{enumerate}
\item[$(i)$] $T$ is a joint $m$-concave $d$-tuple if and only if $\triangle_{\mathscr G, m} \leqslant 0$,
\item[$(ii)$] $T$ is a joint $m$-convex $d$-tuple if and only if  $\triangle_{\mathscr G, m} \geqslant 0.$
\item[$(iii)$] 
$T$ is a joint $m$-isometry if and only if \beq \label{char-interm} \sum_{|\gamma|=k}  \frac{|\gamma|!}{\gamma!} \, \sigma^\gamma \mathscr G = \sum_{j=0}^{m-1} 
\binom{k}{j} \triangle_{\mathscr G, j}, \quad k \in \mathbb Z_+, \eeq
\item[$(iv)$] $T$ is a joint $m$-isometry if and only if 
$$\sum_{j=1}^d \sigma_j \triangle_{\mathscr G, m-1} = \triangle_{\mathscr G, m-1}.$$
\end{enumerate}
\end{lemma}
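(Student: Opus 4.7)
The strategy is to reduce all four statements to a single key identity relating the matrix entries of $B_m(T)$ in the spanning set $\{T^\alpha f_j\}$ to the array $\triangle_{\mathscr{G},m}$, namely
\[
\langle B_m(T)\,T^\alpha f_j,\,T^\beta f_i\rangle \;=\; (-1)^m\, \big((\triangle_{\mathscr{G},m})_{\beta,\alpha}\big)_{ij}, \qquad \alpha,\beta\in\mathbb{Z}_+^d,\ i,j\in I.
\]
To establish this, expand $B_m(T)$ via \eqref{BMQT}, rewrite each $Q_T^n(I)$ using \eqref{QT-n} as $\sum_{|\gamma|=n}\tfrac{n!}{\gamma!}T^{*\gamma}T^\gamma$, transfer $T^{*\gamma}$ across the inner product, and identify $\langle T^{\alpha+\gamma}f_j,T^{\beta+\gamma}f_i\rangle$ as the $(i,j)$-entry of $(\sigma^\gamma\mathscr{G})_{\beta,\alpha}$ by \eqref{sigma-gamma}. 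Collecting the coefficients $(-1)^n\binom{m}{n}$ and pulling out $(-1)^m$ reproduces the definition \eqref{defect-Gram} of $\triangle_{\mathscr{G},m}$.

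Granted this identity, parts (i) and (ii) follow quickly. The wandering subspace property renders the linear span of $\{T^\alpha f_j\}$ dense in $\mathcal{H}$, and for any finitely supported family $\{c_{\alpha,j}\}$ with $h=\sum c_{\alpha,j}T^\alpha f_j$, sesquilinearity gives
\[
\langle(-1)^m B_m(T)h,h\rangle \;=\; \sum_{\alpha,\beta,i,j} c_{\alpha,j}\,\overline{c_{\beta,i}}\,\big((\triangle_{\mathscr{G},m})_{\beta,\alpha}\big)_{ij}.
\]
Since $B_m(T)$ is bounded and self-adjoint, $(-1)^m B_m(T)\leqslant 0$ on this dense subspace is equivalent to the same on all of $\mathcal{H}$, and to the quadratic form of $\triangle_{\mathscr{G},m}$ being non-positive; this yields (i), and reversing the inequality gives (ii). For (iv), I would set $S:=\sigma_1+\cdots+\sigma_d$. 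The multinomial theorem gives $\sum_{|\gamma|=k}\tfrac{k!}{\gamma!}\sigma^\gamma=S^k$, so \eqref{defect-Gram} collapses to
\[
\triangle_{\mathscr{G},m} \;=\; (S-I)^m\mathscr{G} \;=\; (S-I)\,\triangle_{\mathscr{G},m-1}.
\]
Combined with the joint case of (i) and (ii), $T$ is an $m$-isometry iff $\triangle_{\mathscr{G},m}=0$ iff $S\triangle_{\mathscr{G},m-1}=\triangle_{\mathscr{G},m-1}$, which is precisely (iv).

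For (iii), I would invoke the standard fact that $B_m(T)=0$ forces the sequence $n\mapsto Q_T^n(I)$ to be a polynomial of degree $\leqslant m-1$; Newton's forward-difference formula then yields
\[
Q_T^k(I) \;=\; \sum_{j=0}^{m-1}\binom{k}{j}(-1)^j B_j(T), \qquad k\in\mathbb{Z}_+.
\]
Taking matrix entries on both sides exactly as in the derivation of the key identity, and invoking density, proves (iii). Conversely, assuming (iii) for all $k$, I would substitute the right-hand side into $\triangle_{\mathscr{G},m}=\sum_{n=0}^m(-1)^{n+m}\binom{m}{n}S^n\mathscr{G}$, interchange sums, and apply $\binom{m}{n}\binom{n}{j}=\binom{m}{j}\binom{m-j}{n-j}$ together with $\sum_{l=0}^{m-j}(-1)^l\binom{m-j}{l}=0$ for $0\leqslant j<m$ to collapse everything to $\triangle_{\mathscr{G},m}=0$, hence back to the $m$-isometric property via (i)+(ii). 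The main obstacle I anticipate is careful sign and index bookkeeping in the key identity -- particularly confirming that it is $(\triangle_{\mathscr{G},m})_{\beta,\alpha}$ rather than $(\triangle_{\mathscr{G},m})_{\alpha,\beta}$ that appears, and that the quadratic form over finitely supported $(\alpha,j)$ translates faithfully to the ``block'' notion of positivity for $\triangle_{\mathscr{G},m}$; once that is pinned down, every remaining step is either density or an elementary binomial manipulation.
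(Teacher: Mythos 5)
Your proposal is correct and follows essentially the same route as the paper: the heart of both arguments is the identification, via \eqref{QT-n} and \eqref{sigma-gamma}, of the quadratic form of $(-1)^m B_m(T)$ on the span of $\{T^{\alpha}f_j\}$ (dense by the wandering subspace property) with the form of $\triangle_{\mathscr G, m}$ on finitely supported vectors, which gives (i)--(ii), together with the Newton/binomial identity $Q^k_T(I)=\sum_{j=0}^{m-1}(-1)^j\binom{k}{j}B_j(T)$ for (iii). Your handling of the converse in (iii) and of (iv) is done purely at the level of the array (collapsing the binomial sums, and writing $\triangle_{\mathscr G, m}=(S-I)\triangle_{\mathscr G, m-1}$ with $S=\sigma_1+\cdots+\sigma_d$), whereas the paper routes these through the operator facts $\sum_{j=0}^m(-1)^j\binom{m}{j}p_{x,y}(j)=0$ and $B_{m-1}(T)=Q_T(B_{m-1}(T))$ from Gleason--Richter, but these are equivalent bookkeeping of the same binomial identities.
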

\begin{proof}
Assume that $T$ admits the wandering subspace property. Since the verifications of (i) and (ii) are similar, we only verify (i).
For finite subsets $A \subseteq I$ and $B \subseteq \mathbb Z^d_+$, set \beq \label{finM} f:=\sum_{j \in A} \sum_{\beta \in B} c_{j, \beta}(f) T^{\beta}f_j, \quad C_{\beta}(f)=(c_{j, \beta}(f))_{_{j \in A}} \in \ell^2(A),~\beta \in B, \eeq
and let $\mathcal N$ denote the subspace of $\mathcal H$ consisting of all such vectors $f$ in $\mathcal H$. Note that the wandering subspace property for $T$ is equivalent to the density of the subspace $\mathcal N$ in $\mathcal H.$ Fix $f \in \mathcal N$ as given in \eqref{finM} and observe that
\beqn 
\|f\|^2 = \|\sum_{j \in A} \sum_{\beta \in B} c_{j, \beta}(f) T^{\beta}f_j\|^2 = \sum_{\alpha, \beta \in B} \inp{\mathscr G_{\alpha, \beta}\,C_{\beta}(f)}{C_{\alpha}(f)}.
\eeqn
Letting $C(f)=(C_{\alpha}(f))_{\alpha \in B},$ we obtain 
\beq \label{mono}
\|T^{\gamma}f\|^2 &=& \notag \sum_{\alpha, \beta \in B} \inp{\mathscr G_{\alpha + \gamma, \beta + \gamma}\,C_{\beta}(f)}{C_{\alpha}(f)} \\ & \overset{\eqref{sigma-gamma}} = & \inp{(\sigma^\gamma \mathscr G) C(f)}{C(f)}, \quad \gamma \in \mathbb Z^d_+.
\eeq
Thus, by \eqref{QT-n}, for any positive integer $k,$ we have
\beq \label{gen-tuple}
\inp{Q^k_T(I)f}{f}  =  \sum_{|\gamma|=k} \frac{|\gamma|!}{\gamma!} \, \|T^{\gamma}f\|^2 = \sum_{|\gamma|=k}  \frac{|\gamma|!}{\gamma!} \, \inp{(\sigma^\gamma \mathscr G) C(f)}{C(f)}.
\eeq
Since $f$ is varying over the dense subspace $\mathcal N$ of $\mathcal H$, we deduce from \eqref{defect-Gram} that $T$ is a joint $m$-concave $d$-tuple if and only if
\beq \label{latter one}
 \left \langle {\triangle_{\mathscr G, m} C(f)},{C(f)} \right \rangle \leqslant 0, \quad f \in \mathcal N. 
\eeq
Moreover, if we vary $f$ over $\mathcal N,$ $C(f)$ varies over scalar column vectors of all sizes.
Since $\sigma^\gamma \mathscr G$ is a self-adjoint matrix, 
\eqref{latter one} is equivalent to $\triangle_{\mathscr G, m} \leqslant 0.$ 
This completes the verification of (i). 

To see (iii), 
recall the fact that any joint $m$-isometry $d$-tuple $T$ satisfies the following operator identity:
\beq \label{gen-poly}
Q^k_T(I) = \sum_{j=0}^{m-1} (-1)^j \binom{k}{j}  B_j(T), \quad k \in \mathbb Z_+.
\eeq
This is a consequence of \cite[Lemma 2.2]{GR} and the fact that for any joint $m$-isometry $T,$ $B_j(T)=0$ for every $j \geqslant m.$ Let $k$ be a positive integer. 
Note that by \eqref{gen-tuple} and the density of $\mathcal N$ in $\mathcal H$, \eqref{gen-poly} holds if and only if 
\beqn
\sum_{j=0}^{m-1} (-1)^j \binom{k}{j}  \inp{ B_j(T)f}{f}  = \sum_{|\gamma|=k}  \frac{|\gamma|!}{\gamma!} \, \inp{(\sigma^\gamma \mathscr G) C(f)}{C(f)}, \quad f \in \mathcal N,
\eeqn
which, in view of \eqref{BMQT}, \eqref{mono} and \eqref{defect-Gram}, is equivalent to 
\beqn
\sum_{j=0}^{m-1} \binom{k}{j} \inp{\triangle_{\mathscr G, j}C(f)}{C(f)} 
= \sum_{|\gamma|=k}  \frac{|\gamma|!}{\gamma!} \, \inp{(\sigma^\gamma \mathscr G) C(f)}{C(f)}, \quad f \in \mathcal N.
\eeqn
This is, as in the last paragraph, seen to be equivalent to \eqref{char-interm}. 
The necessary part in (iii) now follows from \eqref{gen-poly}.
To see the sufficiency part, 
in view of the discussion above, we may assume that \eqref{gen-poly} holds. In particular, we have the polynomial $p_{x, y}(k) = \inp{Q^k_T(I)x}{y}$ of degree at most $m-1$ for every $x,  y \in \mathcal H.$
However, for any complex polynomial $p$ in one variable of degree at most $m-1,$ 
$\sum_{j=0}^m (-1)^j \binom{m}{j} p(j) = 0$
 (see \cite[Proposition 2.1]{JJS}).
We may now apply the above fact to each $p_{x, y}$ to get (iii). 

Finally, to see (iv), note that by \cite[Equation (2.1)]{GR}, $T$ is a joint $m$-isometry if and only if $B_{m-1}(T)=Q_T(B_{m-1}(T)),$ and
argue as above.
\end{proof}
It is worth noting that the idea of Gramian may be employed to give an alternate proof of \cite[Theorem 2.15]{Sh}. We leave the details to the reader.

\begin{proof}[Proof of Theorem \ref{main-thm}] 
The equivalence of (i)-(iv) is immediate from Lemma \ref{main-lem-1}. 
To see the remaining part, assume that $T$ is a joint $m$-isometric $d$-tuple.  By \cite[Proposition 2.3]{GR}, $\triangle_{T, m-1} \geqslant 0$, and hence by Lemma \ref{main-lem-1}(ii), \beq \label{Toeplitz-Gram-2} \triangle_{\mathscr G, m-1} \geqslant 0. \eeq 
To see the remaining part, set $$\phi(\alpha, \beta) := \sum_{j=0}^{m-1} (-1)^{j+m-1} \binom{m-1}{j}  \sum_{|\gamma|=j} \frac{|\gamma|!}{\gamma!} \, \mathscr G_{\alpha + \gamma, \beta + \gamma}, \quad \alpha, \beta \in \mathbb Z^d_+,$$ and
note that by \eqref{Toeplitz-Gram-2} and Lemma \ref{main-lem-1}(iv) together with \eqref{defect-Gram}, $\phi$
is positive definite and satisfies \eqref{s-Toeplitz}. 
Further, a simple application of the Cauchy-Schwarz inequality shows that for every $\alpha, \beta \in \mathbb Z^d_+$,
\beqn
|\inp{\mathscr G_{\alpha, \beta} X}{Y}_{\ell^2(I)}| \leqslant \|T^{\alpha}\| 
\|T^{\beta}\| \|X\|_{\ell^2(I)} \|Y\|_{\ell^2(I)}, \quad X, Y \in \ell^2(I).
\eeqn
This shows that $\phi(\alpha, \beta) \in \mathcal B(\ell^2(I))$ for every $\alpha, \beta \in \mathbb Z^d_+.$
Now applying Theorem \ref{TMP} (with $\mathcal M=\ell^2(I)$) completes the proof.
\end{proof}

Before we see an analytic model for the spherical moments of joint $2$-isometries, it is worth noting that analytic cyclic joint $2$-isometries can not be modeled as the multiplication tuples on a Dirichlet-type space in dimension bigger than $1.$
\begin{example} \label{a-model-fails} Let $\nu$ be a positive finite Borel measure on the unit circle $\mathbb T$ and let $\mathscr M_z$ be the operator of the multiplication by $z$ acting on the Dirichlet-type space $\mathscr D(\nu)$. By \cite[Theorem 3.6, 3.7 and Corollary 3.8]{Ri}, $\mathscr M_z$ defines an analytic cyclic $2$-isometry.   
Consider the commuting pair $T=(T_1, T_2),$ where $T_j=\mathscr M_z/\sqrt{2}$ for $j=1, 2.$ Since $\mathscr M_z$ is an analytic cyclic operator with cyclic vector $1$, so is $T.$ Also, since $\mathscr M_z$ is a $2$-isometry, by \cite[Proposition 3.7]{CS}, $T$ is a joint $2$-isometry. Since $z_1 \neq z_2$ on $\mathbb B^d,$ there is no $\mu \in M_+(\partial \mathbb B^2)$  and a unitary $U : \mathscr D \rar \mathscr D(\mu)$ such that $U1=1$ and $\mathscr M_{z_j}U=UT_j$ for $j=1, 2.$
\eop
\end{example}

We conclude the paper with a model theorem for the spherical moments of joint $2$-isometries (cf. \cite[Pg 30]{Ri-3}).
\begin{corollary} \label{a-model}
Let $T$ be a commuting $d$-tuple on a separable Hilbert space $\mathcal H$ and let $\{f_{j}\}_{j \in I}$ be an orthonormal basis for the joint kernel $\ker T^*$ of $T^*$ for some nonempty directed set $I.$   
If $T$ is a joint $2$-isometry, then there exists $F_T \in M_+(\partial \mathbb B^d, \mathcal B(\ell^2(I)))$ such that  for every $\alpha, \beta \in \mathbb Z^d_+,$ $k \in \mathbb Z_+$ and $x, y \in \ker T^*,$
\beq \label{s-moments}
&& \notag \inp{Q^k_T(I)T^{\alpha }x}{T^{\beta}y} - \inp{T^{\alpha }x}{T^{\beta}y}  \\ &=& \sum_{|\gamma|=k} \frac{|\gamma|!}{\gamma!}  \inp{z^{\alpha + \gamma}x}{z^{\beta + \gamma}y}_{\mathscr D(F_T)} - \inp{z^{\alpha}x}{z^{\beta}y}_{\mathscr D(F_T)}.
\eeq
\end{corollary}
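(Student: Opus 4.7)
The plan is to produce $F_T$ via the operator-valued moment problem (Theorem \ref{CMP}) and then verify \eqref{s-moments} by appealing to Richter's formula (Theorem \ref{Richter-several}).

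\emph{Reducing the LHS.} Since $T$ is a joint $2$-isometry, $B_2(T)=0$ rewrites as $Q_T^2(I)=2Q_T(I)-I$. Setting $A:=Q_T(I)-I$, this gives $Q_T(A)=Q_T^2(I)-Q_T(I)=A$, and induction yields $Q_T^k(I)=I+kA$ for all $k\in\mathbb Z_+$. By \cite[Proposition 2.3]{GR} we have $A\geq 0$. Hence the left-hand side of \eqref{s-moments} collapses to $k\langle AT^\alpha x, T^\beta y\rangle_{\mathcal H}$.

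\emph{Building $F_T$.} Use the basis $\{f_j\}_{j\in I}$ to identify $\ker T^*$ with $\ell^2(I)$, and define a $\mathcal B(\ell^2(I))$-valued kernel by
\[
\langle \phi(\alpha,\beta)x, y\rangle_{\ell^2(I)} := \langle AT^\beta x, T^\alpha y\rangle_{\mathcal H}, \quad x,y\in \ker T^*.
\]
The identity $\sum_j T_j^*AT_j=Q_T(A)=A$ gives the Toeplitz condition $\sum_j\phi(\alpha+\varepsilon_j,\beta+\varepsilon_j)=\phi(\alpha,\beta)$, and for finitely supported $(x_\gamma)\subset\ker T^*$ one has
\[
\sum_{\alpha,\beta}\langle \phi(\alpha,\beta)x_\beta, x_\alpha\rangle = \Bigl\langle A\sum_\beta T^\beta x_\beta,\; \sum_\alpha T^\alpha x_\alpha\Bigr\rangle = \langle Au, u\rangle\geq 0,\quad u:=\sum_\gamma T^\gamma x_\gamma.
\]
Theorem \ref{CMP} thus yields $\widetilde F\in M_+(\partial \mathbb B^d, \mathcal B(\ell^2(I)))$ with $\phi(\alpha,\beta)=\int_{\partial\mathbb B^d}\zeta^\alpha\overline\zeta^\beta\,d\widetilde F$. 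Define $F_T$ as the pushforward of $\widetilde F$ under the conjugation involution $\zeta\mapsto\overline\zeta$ on $\partial \mathbb B^d$; this is again a positive semispectral measure, and a change of variable together with the relabelling $\alpha\leftrightarrow\beta$ in the $\widetilde F$-moment identity yields
\[
\int_{\partial\mathbb B^d}\zeta^\alpha\overline\zeta^\beta\,d\langle F_T(\cdot)x, y\rangle_{\ell^2(I)} = \langle AT^\alpha x, T^\beta y\rangle_{\mathcal H}, \quad x,y\in \ker T^*.
\]

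\emph{Concluding via Richter.} Apply Theorem \ref{Richter-several} to the scalar complex Borel measure $\nu_{x,y}:=\langle F_T(\cdot)x, y\rangle_{\ell^2(I)}$ (Richter's formula extends from positive to complex measures by linear decomposition) with $p=z^\alpha$ and $q=z^\beta$. After dividing by $d$, the gradient integrals are exactly the Dirichlet components of $\langle z^{\alpha+\gamma}x, z^{\beta+\gamma}y\rangle_{\mathscr D(F_T)}$; the corresponding Hardy contributions cancel because $\mathscr M_z$ on $H^2(\mathbb B^d)$ is a joint isometry (Example \ref{exam-classical} with $p=d$), which, together with the orthogonality of monomials, yields $\sum_{|\gamma|=k}\frac{|\gamma|!}{\gamma!}\langle z^{\alpha+\gamma}x, z^{\beta+\gamma}y\rangle_{H^2_{\ell^2(I)}(\mathbb B^d)}=\langle z^\alpha x, z^\beta y\rangle_{H^2_{\ell^2(I)}(\mathbb B^d)}$. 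Assembling, the right-hand side of \eqref{s-moments} equals $k\int_{\partial\mathbb B^d}\zeta^\alpha\overline\zeta^\beta\,d\nu_{x,y}$, which by the defining property of $F_T$ is $k\langle AT^\alpha x, T^\beta y\rangle$, matching the first step. The delicacy lies in the construction of $F_T$: the pairing $\langle AT^\alpha x, T^\beta y\rangle$ has $\alpha,\beta$ in "swapped" slots relative to the moment $\int\zeta^\alpha\overline\zeta^\beta\,dF_T$, so we first solve the moment problem for the kernel $\phi$ (whose positive-definiteness is transparent since it reduces to $\langle Au, u\rangle\geq 0$) and then correct the indexing via the conjugation pushforward on $\partial \mathbb B^d$. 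Note that no wandering-subspace hypothesis on $T$ is used, since only the moment relation restricted to $\ker T^*\times\ker T^*$ is needed.
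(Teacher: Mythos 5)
Your argument is correct in substance, and it takes a genuinely different route from the paper. The paper first replaces $\mathcal H$ by $\bigvee\{T^{\alpha}h : h \in \ker T^*, \alpha \in \mathbb Z^d_+\}$ so as to be in the wandering-subspace setting, then invokes Theorem \ref{main-thm} with $m=2$ (whose proof runs through the Gramian machinery of Lemma \ref{main-lem-1}) to produce $F_T$ via \eqref{measure-F}, and finally combines $Q^k_T(I)=I+k(Q_T(I)-I)$ with Theorem \ref{Richter-several} exactly as you do in your last step. You bypass the wandering-subspace reduction and Theorem \ref{main-thm} altogether by feeding the compression of $A:=Q_T(I)-I$ to $\ker T^*$ directly into the moment theorem (Theorem \ref{CMP}): positive definiteness is your identity $\inp{Au}{u}\geqslant 0$ with $u=\sum_\gamma T^{\gamma}x_\gamma$ (using $A\geqslant 0$ from \cite[Proposition 2.3]{GR}), and the spherical Toeplitz condition is $Q_T(A)=A$. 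This is legitimate precisely because, as you observe, \eqref{s-moments} only involves the moments on $\ker T^*\times\ker T^*$, so no density of $\bigvee\{T^{\alpha}h\}$ is ever needed; what the paper's route buys instead is that it reuses Theorem \ref{main-thm} (already established for all $m$, with the extra information $\triangle_{\mathscr G,1}\geqslant 0$), while your route is shorter and more self-contained for this particular corollary. The treatment of the right-hand side — extending Richter's formula to the complex measures $\inp{F_T(\cdot)x}{y}$ by linearity and cancelling the Hardy contributions via the spherical isometry identity for the Szeg\H{o} shift — is the same computation the paper leaves implicit.

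One presentational wrinkle: the ``swapped slots'' difficulty you flag is not actually there, and the conjugation pushforward is an avoidable detour. If you define $\inp{\phi(\alpha,\beta)x}{y}:=\inp{AT^{\alpha}x}{T^{\beta}y}$ (i.e.\ $\phi(\alpha,\beta)=P_{\ker T^*}T^{*\beta}AT^{\alpha}|_{\ker T^*}$ under the identification of $\ker T^*$ with $\ell^2(I)$), then the positivity sum required by Theorem \ref{CMP} in the form used in the paper's appendix (namely $\sum_{\alpha,\beta}\inp{\phi(\alpha,\beta)x_\alpha}{x_\beta}\geqslant 0$, cf.\ the semi-inner product \eqref{ip}) is \emph{exactly} the computation $\inp{Au}{u}\geqslant 0$ that you carried out, the Toeplitz condition follows from $Q_T(A)=A$ as before, and \eqref{TMP} then yields $\int_{\partial\mathbb B^d}\zeta^{\alpha}\overline{\zeta}^{\beta}\,d\inp{F_T(\cdot)x}{y}=\inp{AT^{\alpha}x}{T^{\beta}y}$ with no relabelling or pushforward. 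As written, your positivity check is the transposed sum for your (swapped) kernel, which is the positivity sum for the unswapped kernel; the swap and the pushforward cancel each other, so the final measure is the right one, but the argument is cleaner — and immune to any ambiguity about the positive-definiteness convention — if both are simply omitted.
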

\begin{proof} Assume that $T$ is a joint $2$-isometry. Since the restriction of a joint $2$-isometry to an invariant subspace is again a joint $2$-isometry, after replacing $\mathcal H$ by the invariant subspace $\bigvee \{T^{\alpha}h : h \in \ker T^*, \alpha \in \mathbb Z^d_+\}$ of $T,$ if necessary, we may assume that $T$ admits the wandering subspace property.
It suffices to check the above formula for $x, y \in \{f_{j}\}_{j \in I}.$ The existence of the semispectral measure $F_T$ is ensured by Theorem \ref{main-thm} (with $m=2$). A simple calculation using \eqref{measure-F} shows that
 \beq \label{F-T-proof}
    \big[\!\big[ \inp{(Q_T(I)-I)T^{\beta}f_j}{T^{\alpha}f_i}\big]\!\big]_{i, j \in I} 
 = \int_{\partial \mathbb B^d} \zeta^{\alpha} \overline{\zeta}^{\beta} dF(\zeta), \quad \alpha, \beta \in \mathbb Z^d_+.
 \eeq
One may now consider the Dirichlet-type space $\mathscr D(F_T)$ associated with $F_T.$ Since $Q^k_T(I)=I+ k(Q_T(I)-I),$ $k \geqslant 1,$ the desired conclusion now follows from Theorem \ref{Richter-several} and \eqref{F-T-proof}. 
\end{proof}
\begin{remark}
In case of $d=1,$ it may be concluded from the formula \eqref{s-moments}  that the Gramian matrices associated with $T$ and the multiplication $d$-tuple $\mathscr M_z$ on $\mathscr D(F_T)$ coincide. In general, this fails in case $d \geqslant 2$ (see Example \ref{a-model-fails}). However, for a $2$-variable weighted shift $T,$ it can be seen by induction 
that these Gramian matrices are same provided they coincide for indices $\alpha=k\varepsilon_j=\beta,$ $k \in \mathbb Z_+,$ $j=1, 2.$
\end{remark}

As far as the classification of joint $2$-isometries is concerned, the only known case, due to Richter-Sundberg, is of finite dimensional joint $2$-iso-metries (see  \cite[Pg 17]{Ri-3} for the statement and \cite[Theorem 3.2]{Le} for a proof). 
Nevertheless, the results in this paper set the ground for the problem of classifying all analytic joint $2$-isometries that can be modeled as the multiplication tuples $\mathscr M_z$ on a Dirichlet-type space $\mathscr D(F).$ 

\section*{Appendix: Proof of Theorem \ref{TMP}}

In this appendix, we present a proof of Theorem \ref{TMP} that
exploits the joint subnormality of spherical isometries (see \cite[Proposition 2]{At}). 
\begin{proof}[Proof of Theorem \ref{TMP}] The implication (i)$\Rightarrow$(ii) is a routine verification.

(ii)$\Rightarrow$(i): 
The essential idea of the proof of this implication is due to Fuglede (see \cite[Section 4]{F}). 
Consider the complex vector space ${\mathcal M}[z]:=\mathbb C[z_1, \ldots, z_d] \otimes \mathcal M$ of $\mathcal M$-valued polynomials in $z_1, \ldots, z_d$. For finite subsets $A, B$ of $\mathbb Z^d_+$ and $x_{\alpha}, y_{\beta} \in \mathcal M,$ $\alpha \in A$ and $\beta \in B$, define
\beq \label{ip}
\Big \langle \sum_{\alpha \in A} z^{\alpha} \otimes x_\alpha \, , \,{\sum_{\beta \in B} z^{\beta} \otimes y_\beta}\Big \rangle := \sum_{\alpha \in A} \sum_{\beta \in B} \inp{\phi(\alpha, \beta)x_\alpha}{y_\beta}_{_\mathcal M}.
\eeq
Since $\phi$ is positive definite, this defines a semi-inner-product on  ${\mathcal M}[z]$. 
Consider the quotient space ${\mathcal M}[z]/\mathcal N,$ where $\mathcal N:=\{f \in {\mathcal M}[z] : \inp{f}{f}=0\}.$ 
Let $\mathcal H$ be the completion of ${\mathcal M}[z]/\mathcal N.$
Since $
|\inp{f}{g}| \leqslant \|f\|\|g\|$ for every $f, g \in \mathcal M[z],$ $\inp{f}{g}=0$ provided $f$ or $g$ belongs to $\mathcal N,$ and hence for any $[g]=g+\mathcal N \in {\mathcal M}[z]/\mathcal N,$
\beq
\label{quotient-norm}
\|[g]\|^2_{\mathcal H} = \inf_{f \in \mathcal N}\| g + f\|^2 = \|g\|^2.
\eeq 
Thus the inner-product on $\mathcal M[z]$ induces an inner-product on ${\mathcal M}[z]/\mathcal N.$
 Consider the commuting $d$-tuple $\mathscr M_z$ of multiplication operators $\mathscr M_{z_1}, \ldots, \mathscr M_{z_d}$ defined on $\mathcal M[z]$. Note that for any finite subset $A$ of $\mathbb Z^d_+$ and $x_{\alpha} \in \mathcal M,$ $\alpha \in A,$ by \eqref{ip} and \eqref{s-Toeplitz}, we have 
\beqn
\sum_{j=1}^d \Big\|\mathscr M_{z_j} \Big(\sum_{\alpha \in A} z^{\alpha} \otimes x_\alpha \Big) \Big\|^2 & = & \sum_{j=1}^d \Big\| \sum_{\alpha \in A} z^{\alpha + \epsilon_j} \otimes x_\alpha \Big\|^2 \\ & =& \sum_{j=1}^d  \sum_{\alpha, \beta \in A}  \inp{\phi(\alpha + \epsilon_j, \beta + \epsilon_j)x_\alpha}{x_\beta}_{_\mathcal M} \\ 
& =& \sum_{\alpha, \beta \in A}  \inp{\phi(\alpha, \beta)x_\alpha}{x_\beta}_{_\mathcal M} \\
&=& \Big\|\sum_{\alpha \in A} z^{\alpha} \otimes x_\alpha  \Big\|^2.
\eeqn
This together with \eqref{quotient-norm} shows that the $d$-tuple $\mathscr M_z$ induces a joint isometry on $\mathcal H,$ say, $S=(S_1, \ldots, S_d),$ given by 
\beq \label{action}
S_j ([f])= [z_j f], \quad [f] \in \mathcal M[z]/\mathcal N, ~j=1, \ldots, d.
\eeq
By \cite[Proposition 2]{At} and the multivariate Bram's characterization of joint subnormality (\cite[Equation (0)]{A-P}), 
there exists $E \in M_+(\partial \mathbb B^d, \mathcal B(\mathcal H))$ such that 
\beq
\label{subn}
S^{*\beta} S^{\alpha} = \int_{\partial \mathbb B^d}  \zeta^{\alpha}\overline{\zeta}^\beta dE(\zeta), \quad \alpha, \beta \in \mathbb Z^d_+.
\eeq
To complete the proof, we define a bounded linear map $V : \mathcal M \rar \mathcal H$ by 
\beqn V(x)=[1 \otimes x], \quad x \in \mathcal M, \eeqn 
and let $F(\cdot)=V^*E(\cdot)V.$ Clearly, $F$ defines a $\mathcal B(\mathcal M)$-valued semispectral measure on $\partial \mathbb B^d.$ Moreover, for any $\alpha, \beta \in \mathbb Z^d_+$ and $x, y \in \mathcal M,$ we may infer from \eqref{quotient-norm} and the polarization identity that
\beqn
\inp{\phi(\alpha, \beta)x}{y}_{_\mathcal M} &\overset{\eqref{ip}}=& \inp{z^{\alpha} \otimes x}{z^{\beta} \otimes y} \\ 
&=& 
\inp{[z^\alpha  \otimes x]}{[z^{\beta} \otimes y]}_{\mathcal H} \\ &\overset{\eqref{action}}=& 
\inp{ S^\alpha ([1 \otimes x])}{S^{\beta}([1 \otimes y])}_{\mathcal H} \\  &\overset{\eqref{subn}}=& \int_{\partial \mathbb B^d}  \zeta^{\alpha} \overline{\zeta}^\beta d\inp{E(\zeta)Vx}{Vy}  \\ &=& \int_{\partial \mathbb B^d}  \zeta^{\alpha} \overline{\zeta}^\beta d\inp{F(\zeta)x}{y}. 
\eeqn
This completes the proof of the equivalence (i) $\Leftrightarrow$ (ii). The uniqueness part may be deduced from Riesz representation and Stone-Weierstrass approximation theorems.
\end{proof}

\vskip.2cm

{\bf Acknowledgments.} We convey our sincere thanks to Jan Stochel for drawing our attention to the operator approach to moment problems and also providing various references relevant to the complex moment problem discussed in Section 5.

{}

\end{document}